\DeclareMathAlphabet{\mathpzc}{OT1}{pzc}{m}{it}
\def\cC{\mathscr{C}}
\def\cD{\mathscr{D}}
\def\cF{\mathscr{F}}
\def\cP{\mathscr{P}}
\def\cS{\mathscr{S}}
\def\cT{\mathscr{T}}
\def\cX{\mathscr{X}}
\def\cY{\mathscr{Y}}
\def\cZ{\mathscr{Z}}
\def\BZ{\mathbb{Z}}
\def\add{\operatorname{add}}
\def\adots{\mathinner{\mkern1mu\raise1.0pt\vbox{\kern7.0pt\hbox{.}}\mkern2mu\raise4.0pt\hbox{.}\mkern2mu\raise7.0pt\hbox{.}\mkern1mu}}
\def\ast{{\textstyle *}}
\def\dddots{\mathinner{\mkern1mu\raise10.0pt\vbox{\kern7.0pt\hbox{.}}\mkern2mu\raise5.3pt\hbox{.}\mkern2mu\raise1.0pt\hbox{.}\mkern1mu}}
\def\dddotssmall{\mathinner{\mkern1mu\raise7.0pt\vbox{\kern7.0pt\hbox{.}}\mkern-1mu\raise4pt\hbox{.}\mkern-1mu\raise1.0pt\hbox{.}\mkern1mu}}
\def\Db{\cD^{\operatorname{b}}}
\def\dim{\operatorname{dim}}
\def\dual{\operatorname{D}}
\def\End{\operatorname{End}}
\def\Ext{\operatorname{Ext}}
\def\Hom{\operatorname{Hom}}
\def\id{\operatorname{id}}
\def\Image{\operatorname{Im}}
\def\indec{\operatorname{ind}}
\def\index{\operatorname{index}}
\def\indexhigher{\operatorname{index}^{\,\tiny{\pentagon}}}
\def\K{\operatorname{K}}
\def\K0{\operatorname{K}_0}
\def\Ksp{\operatorname{K}_0^{\operatorname{sp}}}
\def\mod{\operatorname{mod}}
\def\obj{{\operatorname{obj}}}
\def\PSL2{\operatorname{PSL}_2}
\def\rad{\operatorname{rad}}
\def\SL2{\operatorname{SL}_2}
\numberwithin{equation}{section}
\newtheorem{Lemma}{Lemma}[section]
\newtheorem{Theorem}[Lemma]{Theorem}
\newtheorem{Proposition}[Lemma]{Proposition}
\theoremstyle{definition}
\newtheorem{Definition}[Lemma]{Definition}
\newtheorem{Setup}[Lemma]{Setup}
\newtheorem{Remark}[Lemma]{Remark}
\newtheorem*{bfhpg*}{}
\begin{document}

\setlength{\parindent}{0pt}
\setlength{\parskip}{7pt}

\title[Higher tropical friezes and indices]{Tropical friezes and the index in higher homological algebra}

\author{Peter J\o rgensen}

\address{School of Mathematics, Statistics and Physics, Newcastle University, Newcastle upon Tyne NE1 7RU, United Kingdom}
\email{peter.jorgensen@ncl.ac.uk}

\urladdr{http://www.staff.ncl.ac.uk/peter.jorgensen}


\keywords{Cluster category, cluster character, cluster tilting subcategory, higher angulated category, triangulated category}

\subjclass[2010]{05E15, 16G10, 18E10, 18E30}


\begin{abstract} 

Cluster categories and cluster algebras encode two dimensional structures.  For instance, the Auslander--Reiten quiver of a cluster category can be drawn on a surface, and there is a class of cluster algebras determined by surfaces with marked points.  

\medskip
\noindent
Cluster characters are maps from cluster categories (and more general triangulated categories) to cluster algebras.  They have a tropical shadow in the form of so-called tropical friezes, which are maps from cluster categories (and more general triangulated categories) to the integers.

\medskip
\noindent
This paper will define higher dimensional tropical friezes.  One of the motivations is the higher dimensional cluster categories of Oppermann and Thomas, which encode $( d+1 )$-dimensional structures for an integer $d \geqslant 1$.  They are $( d+2 )$-angulated categories, which belong to the subject of higher homological algebra.

\medskip
\noindent
We will define higher dimensional tropical friezes as maps from higher cluster categories (and more general $( d+2 )$-angulated categories) to the integers.  Following Palu, we will define a notion of $( d+2 )$-angulated index, establish some of its properties, and use it to construct higher dimensional tropical friezes.

\end{abstract}

\maketitle

\setcounter{section}{-1}
\section{Introduction}
\label{sec:introduction}

The background of this paper is a certain part of cluster theory, summed up in Figure \ref{fig:clusters}.  Let us give some brief reminders about the objects in the figure.
\begin{itemize}
\setlength\itemsep{4pt}

  \item  Cluster categories are a class of triangulated categories defined by Buan, Marsh, Reineke, Reiten, and Todorov in \cite[sec.\ 1]{BMRRT}.  

  \item  Cluster algebras are a class of commutative algebras defined by Fomin and Zelevinsky in \cite[def.\ 2.3]{FZ}.

  \item  Cluster characters are certain ``nice'' maps from cluster categories (and more general triangulated categories) to cluster algebras.  They were defined by Palu in \cite[def.\ 1.2]{Palu}.  See Section \ref{subsec:classic} below.
  
  \item  Tropical friezes are certain ``nice'' maps from cluster categories (and more general triangulated categories) to the integers.  They can be viewed as a tropical version of cluster characters, and were defined by Guo in \cite[def.\ 2.2]{G},  following the lead of Propp \cite[sec.\ 5]{Propp} and Coxeter \cite[sec.\ 1]{C}.  See Section \ref{subsec:classic} below.
  
\end{itemize}
All these objects are, in a sense, two dimensional.  For instance, Fomin, Shapiro, and Thurston discovered a class of cluster algebras arising from surfaces with marked points, see \cite[sec.\ 5]{FST}.  Moreover, the Auslander--Reiten (=AR) quivers of cluster categories are two dimensional because they consist locally of two dimensional meshes.  An example is shown at the left of Figure \ref{fig:meshes}; it reflects that the cluster category is triangulated, and that there is an AR triangle $s_2 \rightarrow x_1 \oplus x'_1 \rightarrow s_0 \rightarrow \Sigma s_2$, where $\Sigma$ denotes the suspension functor.

It is not yet known how to obtain higher dimensional generalisations of cluster algebras and cluster characters, but this paper will give a higher dimensional generalisation of the part of Figure \ref{fig:clusters} dealing with tropical friezes.  This is based on work by Oppermann and Thomas, who defined higher dimensional cluster categories, see \cite[sec.\ 5]{OT} and Section \ref{subsec:classic2} below.  They are $( d+2 )$-angulated categories as defined by Geiss, Keller, and Oppermann for an integer $d \geqslant 1$, see \cite[def.\ 2.1]{GKO}, and hence belong to the subject of higher homological algebra introduced by Iyama.  They are, in a sense, $( d+1 )$-dimensional objects.  For instance, their AR quivers are $( d+1 )$-dimensional because they consist locally of $( d+1 )$-dimensional meshes.  An example is shown for $d = 2$ at the right of Figure \ref{fig:meshes}; it reflects that the category is $4$-angulated, and that there is an AR $4$-angle $s_3 \rightarrow x_2 \oplus x'_2 \oplus x''_2 \rightarrow x_1 \oplus x'_1 \oplus x''_1 \rightarrow s_0 \rightarrow \Sigma^2 s_3$.

We will define higher dimensional tropical friezes as maps from higher cluster categories (and more general $( d+2 )$-angulated categories) to the integers.  The definition is inspired by Oppermann and Thomas's higher dimensional tropical exchange relations in Dynkin type $A$, see \cite[thm.\ 1.4]{OT}.  We will define a notion of $( d+2 )$-angulated index, following Palu's definition of the index on triangulated categories, see \cite[sec.\ 2.1]{Palu}.  We will establish some properties of the $( d+2 )$-angulated index, and show that it can be used to construct higher dimensional tropical friezes; this generalises a result by Guo, see \cite[thm.\ 3.1]{G}.
\begin{figure}
\begin{center}
\begin{tikzpicture}[scale=1]
  \draw (-5,0) ellipse (2cm and 1cm);
  \draw (5,1.5) ellipse (2cm and 1cm);
  \node at (-5,0){Cluster categories};
  \node at (5,1.5){Cluster algebras};
  \node at (5,-1.5){$\BZ$};
  \draw[->] (-2.5,0.5) -- (2.5,1.5) node[midway,sloped,above] {Cluster characters};
  \draw[->] (-2.5,-0.5) -- (4.5,-1.5) node[midway,sloped,below] {Tropical friezes};
\end{tikzpicture}
\end{center}
\caption{The background to this paper is a certain part of cluster theory, summed up in this figure.}
\label{fig:clusters}
\end{figure}
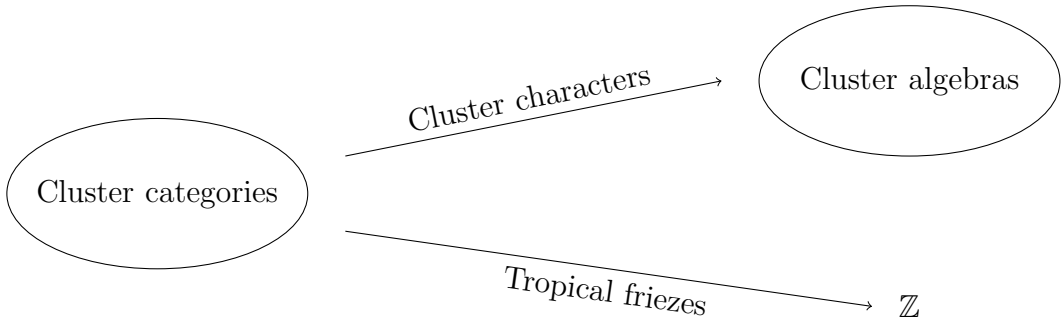

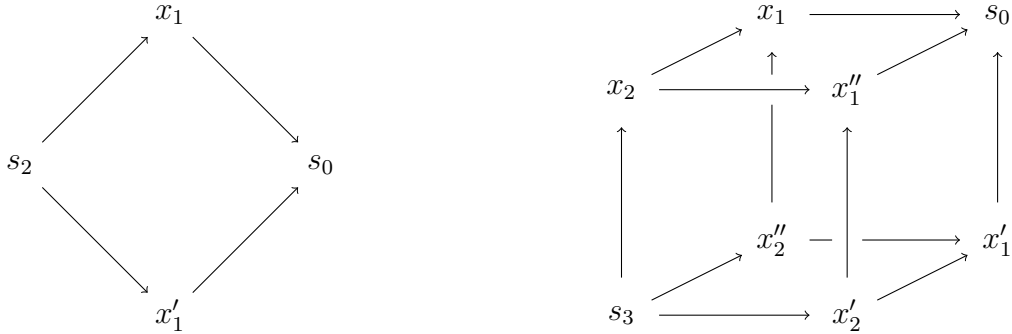
\begin{figure}
\begin{center}
\begin{tikzpicture}[scale=1]
  \draw (-8,2) node {$s_2$};
  \draw (-6,0) node {$x'_1$};
  \draw (-6,4) node {$x_1$};
  \draw (-4,2) node {$s_0$};
  \draw[->] (-7.7,2.3) -- (-6.3,3.7);
  \draw[->] (-7.7,1.7) -- (-6.3,0.3);
  \draw[->] (-5.7,0.3) -- (-4.3,1.7);
  \draw[->] (-5.7,3.7) -- (-4.3,2.3);
  
  \draw (0,0) node {$s_3$};
  \draw (3,0) node {$x'_2$};
  \draw (0,3) node {$x_2$};
  \draw (2,1) node {$x''_2$};
  \draw (3,3) node {$x''_1$};
  \draw (5,1) node {$x'_1$};  
  \draw (2,4) node {$x_1$};  
  \draw (5,4) node {$s_0$};
  \draw[->] (0.5,0) -- (2.5,0);
  \draw[->] (0,0.5) -- (0,2.5);
  \draw[->] (0.4,0.2) -- (1.6,0.8);
  \draw[->] (0.5,3) -- (2.5,3);
  \draw[->] (3,0.5) -- (3,2.5);
  \draw[->] (0.4,3.2) -- (1.6,3.8);  
  \draw[->] (3.4,0.2) -- (4.6,0.8);
  \draw[->] (3.4,3.2) -- (4.6,3.8);  
  \draw[->] (2.5,4) -- (4.5,4);
  \draw[->] (5,1.5) -- (5,3.5);
  \draw (2.5,1) -- (2.8,1); \draw[->] (3.2,1) -- (4.5,1);  
  \draw (2,1.5) -- (2,2.8); \draw[->] (2,3.2) -- (2,3.5);  
\end{tikzpicture}
\end{center}
\caption{The two dimensional mesh is a typical local shape in the Auslander--Reiten (= AR) quiver of a triangulated category, reflecting that there is an AR triangle $s_2 \rightarrow x_1 \oplus x'_1 \rightarrow s_0 \rightarrow \Sigma s_2$.  The three dimensional mesh is a typical local shape in the AR quiver of a $4$-angulated category, reflecting that there is an AR $4$-angle $s_3 \rightarrow x_2 \oplus x'_2 \oplus x''_2 \rightarrow x_1 \oplus x'_1 \oplus x''_1 \rightarrow s_0 \rightarrow \Sigma^2 s_3$.}
\label{fig:meshes}
\end{figure}

\subsection{Background: Tropical friezes on triangulated categories}
\label{subsec:classic}

Let $k$ be a field, $\cC$ a $k$-linear $\Hom$-finite triangulated category with split idempotents.  Write $\cC( -,- )$ for $\Hom_{ \cC }( -,- )$.  Assume that $\cC$ is $2$-Calabi--Yau, in the sense that there are natural isomorphisms $\cC( s,s' ) \cong \dual\!\cC\big( s',\Sigma^2 s \big)$ for $s,s' \in \cC$, where $\dual( - ) = \Hom_k( -,k )$ is $k$-linear duality.  Let $\indec\,\cC$ denote the indecomposable objects of $\cC$.  The objects $s_0$, $s_2 \in \indec\, \cC$ are called an {\em exchange pair in} $\cC$ if $\dim_k \cC( s_0,\Sigma s_2 ) = 1$.  Then we also have $\dim_k \cC( s_2,\Sigma s_0 ) = 1$ since $\cC$ is $2$-Calabi--Yau, so there are non-split triangles
\begin{equation}
\label{equ:classic_exchange_triangles}
  s_2 \rightarrow x_1 \rightarrow s_0 \rightarrow \Sigma s_2
  \;\;,\;\;
  s_0 \rightarrow y_1 \rightarrow s_2 \rightarrow \Sigma s_0
\end{equation}
in $\cC$.

A map $\chi : \obj\,\cC \rightarrow A$ to a commutative ring $A$, which is constant on isomorphism classes and exponential in the sense $\chi( s \oplus s' ) = \chi( s )\chi( s' )$, is called a {\em cluster character on $\cC$} if it satisfies
\[
  \chi( s_2 )\chi( s_0 ) = \chi( x_1 ) + \chi( y_1 )
\]
for each exchange pair $s_0, s_2 \in \indec\, \cC$ with ensuing non-split triangles \eqref{equ:classic_exchange_triangles}, see \cite[def.\ 1.2]{Palu}.  In typical applications, $\cC$ is a cluster category, $A$ a ring of Laurent polynomials, and $\chi$ maps the rigid indecomposable objects of $\cC$ to the cluster variables of a cluster algebra contained in $A$.

Tropical friezes are tropicalised versions of cluster characters.  A map $f : \obj\,\cC \rightarrow \BZ$, which is constant on isomorphism classes and additive in the sense $f( s \oplus s' ) = f( s ) + f( s' )$, is called a {\em tropical frieze on} $\cC$ if it satisfies
\begin{equation}
\label{equ:classic_tropical_exchange_relation}
  f( s_0 ) + f( s_2 ) = \max \{\, f( x_1 ),f( y_1 ) \,\}
\end{equation}
for each exchange pair $s_0, s_2 \in \indec\, \cC$ with ensuing non-split triangles \eqref{equ:classic_exchange_triangles}, see \cite[def.\ 2.2]{G}.

\subsection{Background: Higher dimensional cluster categories}
\label{subsec:classic2}

Let $\Phi$ be a finite dimensional $k$-algebra, which is $d$-representation finite in the sense of \cite[def.\ 2.2]{IO1}.  Then $\mod( \Phi )$, the category of finitely generated right modules, has a unique $d$-cluster tilting subcategory $\cF$, see \cite[prop.\ 2.3]{IO1}.  

Oppermann and Thomas defined the {\em higher dimensional cluster category} $\cS$ of $\Phi$, see \cite[sec.\ 5]{OT}, which is a $( d+2 )$-angulated category, see \cite[def.\ 2.1]{GKO}.  There is an inclusion $\cF \subseteq \cS$.  If $d = 1$, then $\Phi$ is a hereditary algebra of finite representation type, $\cF$ is all of $\mod( \Phi )$, and $\cS$ is the usual triangulated cluster category from \cite[sec.\ 1]{BMRRT}.  If $d \geqslant 2$, then $\cS$ is a new object where the suspension functor $\Sigma$ has been replaced by a $d$-suspension functor $\Sigma^d$ (not necessarily a $d$th power), and mapping cones have been replaced by complexes consisting of $d$ objects.

\subsection{Tropical friezes on $( d+2 )$-angulated categories}
\label{subsec:frieze}

Let $d \geqslant 1$ be an integer, $\cS$ a $k$-linear $\Hom$-finite $( d+2 )$-angulated category with split idempotents and $d$-suspension functor $\Sigma^d$.  Assume that $\cS$ is $2d$-Calabi--Yau, in the sense that there are natural isomorphisms $\cS( s,s' ) \cong \dual\!\cS\big( s',( \Sigma^d )^2 s \big)$ for $s,s' \in \cS$; this is satisfied by higher dimensional cluster categories by \cite[thm.\ 5.2(2)]{OT}.

The objects $s_0$, $s_{ d+1 } \in \indec\, \cS$ are called an {\em exchange pair in $\cS$} if $\dim_k \cS( s_0,\Sigma^d s_{ d+1 } ) = 1$.  Then we also have $\dim_k \cS( s_{ d+1 },\Sigma^d s_0 ) = 1$ since $\cS$ is $2d$-Calabi--Yau, so there are $( d+2 )$-angles in $\cS$:
\begin{align}
\label{equ:exchange_pair1}
  & s_{ d+1 } \rightarrow x_d \rightarrow \cdots \rightarrow x_1 \rightarrow s_0 \xrightarrow{ \gamma_0 } \Sigma^d s_{ d+1 }, \\
\label{equ:exchange_pair2}
  & s_0 \rightarrow y_1 \rightarrow \cdots \rightarrow y_d \rightarrow s_{ d+1 } \xrightarrow{ \gamma_{ d+1 } } \Sigma^d s_0
\end{align}
where $\gamma_0, \gamma_{ d+1 } \neq 0$.
\begingroup
\renewcommand{\theLemma}{\Alph{Lemma}}
\begin{Definition}
\label{def:frieze}
A map $f : \obj\,\cS \rightarrow \BZ$, which is constant on isomorphism classes and additive in the sense $f( s \oplus s' ) = f( s ) + f( s' )$, is a {\em tropical frieze on $\cS$} if it satisfies
\begin{equation}
\label{equ:frieze_defining_equation}
  f( s_0 ) + (-1)^{ d+1 }f( s_{ d+1 } )
  = \max \Bigg\{ \sum_{ i=1 }^d (-1)^{ i+1 }f( x_i ),\sum_{ i=1 }^d (-1)^{ i+1 }f( y_i ) \Bigg\}
\end{equation}
for each exchange pair $s_0, s_{ d+1 } \in \indec\, \cS$ with $( d+2 )$-angles \eqref{equ:exchange_pair1} and \eqref{equ:exchange_pair2} where $\gamma_0, \gamma_{ d+1 } \neq 0$.
\hfill $\Box$
\end{Definition}
\endgroup
If $d = 1$ then Definition \ref{def:frieze} specialises to the definition of tropical friezes on triangulated categories:  The $( d+2 )$-angles \eqref{equ:exchange_pair1} and \eqref{equ:exchange_pair2} become the triangles \eqref{equ:classic_exchange_triangles}, and Equation \eqref{equ:frieze_defining_equation} becomes Equation \eqref{equ:classic_tropical_exchange_relation}.  For general $d$, Definition \ref{def:frieze} is inspired by Oppermann and Thomas's higher dimensional tropical exchange relations in Dynkin type $A$, see \cite[thm.\ 1.4]{OT}.  However, our definition does not specialise to theirs because of the boundary terms included in \cite[thm.\ 1.4]{OT}.

A concrete example of Definition \ref{def:frieze} is given in Figure \ref{fig:frieze2}, which shows the values of a tropical frieze $f$ on the AR quiver of a certain $5$-angulated category.  Note that the quiver is, in fact, a four dimensional object, but has sufficiently few vertices to be drawn as shown.
\begin{figure}
\begin{tikzpicture}[scale=3]
  \node at (0:1.0){$-17$};
  \draw[->] (6:1.0) arc (6:33:1.0);
  \node at (40:1.0){$-8$};
  \draw[->] (46:1.0) arc (46:74:1.0);
  \node at (80:1.0){$2$};
  \draw[->] (86:1.0) arc (86:114:1.0);
  \node at (120:1.0){$19$};
  \draw[->] (126:1.0) arc (126:154:1.0);
  \node at (160:1.0){$26$};
  \draw[->] (166:1.0) arc (166:194:1.0);
  \node at (200:1.0){$17$};
  \draw[->] (206:1.0) arc (206:234:1.0);
  \node at (240:1.0){$8$};
  \draw[->] (246:1.0) arc (246:273:1.0);
  \node at (280:1.0){$-2$};
  \draw[->] (287:1.0) arc (287:312:1.0);
  \node at (320:1.0){$-19$};
  \draw[->] (328:1.0) arc (328:354:1.0);
\end{tikzpicture}
\caption{A tropical frieze $f$ on the Auslander--Reiten quiver of a certain $5$-angulated category.  If $a,b,c,d,e$ are consecutive objects, then $f( a ) + f( e ) = \max \{\, f( b ) - f( c ) + f( d ),0 \,\}$.  See Section \ref{sec:example} for further details.}
\label{fig:frieze2}
\end{figure}
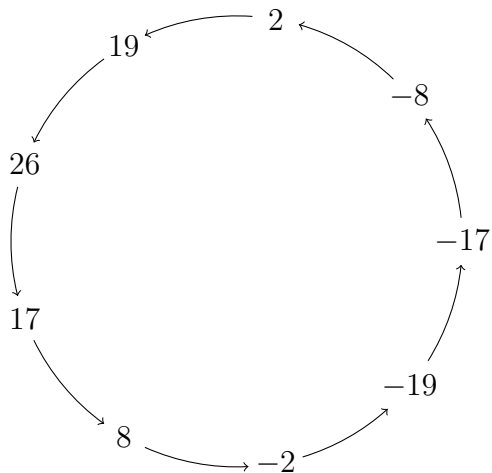
In this case, Equation \eqref{equ:frieze_defining_equation} means that if $a,b,c,d,e$ are consecutive objects in the AR quiver, then
\[
  f( a ) + f( e )
  = \max \{\, f( b ) - f( c ) + f( d ),0 \,\}.
\]
See Section \ref{sec:example} for further details.

Observe that a different notion of higher dimensional friezes was defined by McMahon, see \cite[sec.\ 4]{McMahon}.

\subsection{The $( d+2 )$-angulated index}
\label{subsec:index}

Let us drop the assumption that $\cS$ is $2d$-Calabi--Yau, which is not needed for defining the $( d+2 )$-angulated index.  Let $t \in \cS$ be an Oppermann--Thomas cluster tilting object, see \cite[def.\ 5.3]{OT} or Definition \ref{def:OT_cluster_tilting}, and set $\cT = \add( t )$. 
\begingroup
\renewcommand{\theLemma}{\Alph{Lemma}}
\begin{Definition}
\label{def:index2}
For $s \in \cS$, let
\begin{equation}
\label{equ:index2_higher_angle}
  t_d \xrightarrow{ \tau_d } \cdots \xrightarrow{ \tau_1 } t_0 \rightarrow s \rightarrow \Sigma^d t_d
\end{equation}
be a $( d+2 )$-angle in $\cS$ with $t_i \in \cT$ and $\tau_i$ in the radical of $\cS$ for each $i$.  The {\em $( d+2 )$-angulated index of $s$ with respect to $t$} is 
\[
  \indexhigher_{ \cT }( s ) = \sum_{ i=0 }^{ d } (-1)^i [t_i],
\]
viewed as an element of the split Grothendieck group $\Ksp( \cT )$.
\hfill $\Box$
\end{Definition}
\endgroup
The superscript $\pentagon$ serves to distinguish the $( d+2 )$-angulated index from the triangulated index we will also need; see Definition \ref{def:index}.  Remark \ref{rmk:index2} shows that $\indexhigher_{ \cT }( s )$ is well-defined and depends only on the isomorphism class of $s$.  Definition \ref{def:index2} is the natural $( d+2 )$-angulated generalisation of Palu's index on triangulated categories, see \cite[sec.\ 2.1]{Palu}, which arises as the special case $d = 1$.

Our first main result is that the $( d+2 )$-angulated index is additive on $( d+2 )$-angles up to an error term.  This generalises \cite[prop.\ 2.2]{Palu} to $( d+2 )$-angulated categories.  Set $\Gamma = \End_{ \cS }( t )$.  There is a functor $F : \cS \rightarrow \mod( \Gamma )$ given by $F( - ) = \cS( t,- )$.  In particular, a morphism $\gamma$ in $\cS$ induces a module $\Image F\gamma$ in $\mod( \Gamma )$, hence an element $[ \Image F\gamma ]$ in the Grothendieck group $\K0( \mod\,\Gamma )$.  
\begingroup
\renewcommand{\theLemma}{\Alph{Lemma}}
\begin{Theorem}
[=Theorem \ref{thm:index_additive5}]
\label{thm:index_additive5_intro}
There is a unique homomorphism $\theta : \K0( \mod\,\Gamma ) \rightarrow \Ksp( \cT )$ of abelian groups such that if $s_{ d+1 } \rightarrow \cdots \rightarrow s_0 \xrightarrow{ \gamma } \Sigma^d s_{ d+1 }$ is a $( d+2 )$-angle in $\cS$, then
\[
\tag*{$\Box$}
  \sum_{ i=0 }^{ d+1 } (-1)^i \indexhigher_{ \cT }( s_i ) = 
  \theta \big( [ \Image F\gamma ] \big).
\]
\end{Theorem}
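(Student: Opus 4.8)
The plan is to construct $\theta$ explicitly and then verify the displayed identity. First I would recall Palu's construction in the triangulated ($d=1$) case and adapt it. The group $\K0(\mod\Gamma)$ is generated by the classes $[\Image F\gamma]$ as $\gamma$ ranges over morphisms of $\cS$; in fact, since $\Gamma = \End_\cS(t)$ is finite-dimensional, $\K0(\mod\Gamma)$ is free on the classes of the simple $\Gamma$-modules, and each simple is of the form $\Image F\gamma$ for a suitable radical morphism between indecomposable summands of $t$ (or is realised via the $(d+2)$-angle defining the index of an appropriate object). So the first step is to \emph{define} $\theta$ on generators: set $\theta([\Image F\gamma]) = \sum_{i=0}^{d+1}(-1)^i \indexhigher_\cT(s_i)$ for a chosen $(d+2)$-angle $s_{d+1}\to\cdots\to s_0\xrightarrow{\gamma}\Sigma^d s_{d+1}$ completing $\gamma$, and then show this is (a) independent of the chosen $(d+2)$-angle with a given last map $\gamma$, and (b) independent of the choice of $\gamma$ representing a given module $\Image F\gamma$, and (c) additive on short exact sequences in $\mod\Gamma$, so that it descends to a well-defined homomorphism on $\K0$.

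For step (a), the key input is the existence and essential uniqueness, up to (non-unique) isomorphism, of $(d+2)$-angles with prescribed last morphism, together with the fact established in Remark~\ref{rmk:index2} that $\indexhigher_\cT$ is well-defined on isomorphism classes. Two $(d+2)$-angles with the same morphism $\gamma$ in the last slot are related by an isomorphism of $(d+2)$-angles (this is the higher analogue of the triangulated axiom), which term-by-term gives $s_i\cong s_i'$, hence equal index contributions. For step (c), given a short exact sequence $0\to M'\to M\to M''\to 0$ in $\mod\Gamma$ with $M=\Image F\gamma$, I would realise $M', M, M''$ simultaneously as images of morphisms sitting in a suitable diagram of $(d+2)$-angles — here one uses that $F(-)=\cS(t,-)$ is homological on $(d+2)$-angles, i.e.\ sends a $(d+2)$-angle to a long exact sequence in $\mod\Gamma$ — and then the alternating sum of indices telescopes. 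This is exactly the point where the higher-homological bookkeeping (signs $(-1)^i$, the length-$(d+2)$ complexes, and the octahedral-type axiom for $(d+2)$-angulated categories from \cite[def.\ 2.1]{GKO}) must be handled carefully, and I expect this to be the main obstacle: one needs a higher analogue of the $3\times 3$ or octahedral diagram that in Palu's argument produces the additivity, and assembling it for $(d+2)$-angles while tracking all signs is the delicate part.

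Finally, having defined $\theta$ and checked it is a well-defined group homomorphism, the displayed formula holds \emph{by construction} on the generators, and uniqueness is immediate: any homomorphism satisfying the formula agrees with $\theta$ on every $[\Image F\gamma]$, and these classes generate $\K0(\mod\Gamma)$. I would organise the write-up so that the bulk of the work — the independence and additivity verifications — is done via a single well-chosen diagram of $(d+2)$-angles, invoking the long exact sequence obtained by applying $F$, rather than by separate ad hoc arguments. A secondary technical point to address is that the $\tau_i$ in the defining $(d+2)$-angle \eqref{equ:index2_higher_angle} are required to be radical morphisms; I would note that any $(d+2)$-angle can be reduced to one of this form by splitting off trivial summands, which changes neither $\Image F\gamma$ nor the alternating sum of indices, so no generality is lost.
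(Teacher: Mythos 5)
There is a genuine gap. Your plan is to \emph{define} $\theta$ by the displayed formula on classes $[\Image F\gamma]$ coming from chosen $(d+2)$-angles and then declare that ``the formula holds by construction on the generators''. But the theorem asserts the identity for \emph{every} $(d+2)$-angle in $\cS$, and that is exactly where the content lies: once $\theta$ is pinned down on the simples (which, as you note, freely generate $\K0(\mod\,\Gamma)$, so well-definedness there is vacuous), nothing in your argument shows that an arbitrary $(d+2)$-angle $s_{d+1}\rightarrow\cdots\rightarrow s_0\xrightarrow{\gamma}\Sigma^d s_{d+1}$ satisfies $\sum_i(-1)^i\indexhigher_{\cT}(s_i)=\theta([\Image F\gamma])$. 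Equivalently, if you instead define $\theta$ on all classes $[\Image F\gamma]$, then your steps (b) and (c) --- independence of the representing $\gamma$ and compatibility with short exact sequences --- are precisely this missing verification, and your sketch for them (``a suitable diagram of $(d+2)$-angles \ldots the alternating sum telescopes'') is not an argument; you yourself flag it as the main obstacle. Inside a bare $(d+2)$-angulated category one does not have octahedral-type tools to build such a diagram, which is why the paper imposes the ``standard'' hypothesis and works in the ambient triangulated category $\cC$: $\theta$ is defined there first (Lemma \ref{lem:theta}, via $\theta([Fm])=\index_{\cT}(\Sigma^{-1}m)+\index_{\cT}(m)$ for $m\in\cT*\Sigma\cT$, with Lemmas \ref{lem:theta1}, \ref{lem:theta2} and \ref{lem:short_exact_sequence} doing the well-definedness), additivity with error term is proved for triangles (Theorem \ref{thm:index_additive4}), and the $(d+2)$-angulated statement is then obtained by unrolling the $(d+2)$-angle into a tower of triangles via \cite[thm.\ 1]{GKO}, collapsing it with Lemma \ref{lem:shorter_tower}, and combining Theorem \ref{thm:index_additive4} with Lemma \ref{lem:tower} and Proposition \ref{pro:indices_agree}. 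None of this machinery (or a substitute for it) appears in your proposal.

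A smaller but real error: your step (a) asserts that two $(d+2)$-angles with the same last morphism $\gamma$ are isomorphic term by term, ``the higher analogue of the triangulated axiom''. For $d\geqslant 2$ this is false; completions of a morphism to a $(d+2)$-angle are only determined up to adding trivial summands in the middle terms (compare Remark \ref{rmk:index2}, which needs the radical condition and \cite[lem.\ 5.18(2)]{OT} even to make the index well defined). The alternating sum of indices does turn out to be insensitive to such summands, but that requires an argument rather than an appeal to uniqueness of cones. By contrast, your treatment of uniqueness of $\theta$ (via simples realised by almost split morphisms, which generate $\K0(\mod\,\Gamma)$) matches the paper's part (ii) and is fine.
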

\endgroup
Our second main result is that tropical friezes on $( d+2 )$-angulated categories can be constructed using the $( d+2 )$-angulated index.  This generalises \cite[thm.\ 3.1]{G} to $( d+2 )$-angulated categories.  The proof relies crucially on Theorem \ref{thm:index_additive5_intro}.
\begingroup
\renewcommand{\theLemma}{\Alph{Lemma}}
\begin{Theorem}
[=Theorem \ref{thm:frieze}]
\label{thm:frieze_intro}
Assume that $d$ is odd and $\cS$ is $2d$-Calabi--Yau.  Let $\varphi : \Ksp( \cT ) \rightarrow \BZ$ be a homomorphism of abelian groups.  The composition 
\[
  \varphi \circ \indexhigher_{ \cT } : \obj\, \cS \rightarrow \BZ
\]
is a tropical frieze on $\cS$ if $\varphi$ satisfies $\varphi\theta\big( [M] \big) \geqslant 0$ for each $M \in \mod( \Gamma )$.
\hfill $\Box$
\end{Theorem}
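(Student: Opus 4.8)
The plan is to verify the three requirements of Definition \ref{def:frieze} for $f = \varphi \circ \indexhigher_{\cT}$. That $f$ is constant on isomorphism classes is immediate, since $\indexhigher_{\cT}$ has this property (Remark \ref{rmk:index2}). For additivity: if $u_d \to \cdots \to u_0 \to s \to \Sigma^d u_d$ and $u'_d \to \cdots \to u'_0 \to s' \to \Sigma^d u'_d$ are $(d+2)$-angles as in Definition \ref{def:index2}, then their direct sum is again such a $(d+2)$-angle --- a direct sum of $(d+2)$-angles is a $(d+2)$-angle, $\cT$ is closed under direct sums, and a direct sum of radical morphisms is radical --- so $\indexhigher_{\cT}(s \oplus s') = \sum_{i=0}^d (-1)^i[u_i \oplus u'_i] = \indexhigher_{\cT}(s) + \indexhigher_{\cT}(s')$ in $\Ksp(\cT)$; applying $\varphi$ gives $f(s \oplus s') = f(s) + f(s')$.

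The heart of the matter is Equation \eqref{equ:frieze_defining_equation}. Let $s_0, s_{d+1}$ be an exchange pair with $(d+2)$-angles \eqref{equ:exchange_pair1} and \eqref{equ:exchange_pair2}, where $\gamma_0, \gamma_{d+1} \neq 0$. Applying Theorem \ref{thm:index_additive5_intro} to \eqref{equ:exchange_pair1} and using that $d$ is odd, so $(-1)^{d+1} = 1$, the alternating sum reads
\[
  \indexhigher_{\cT}(s_0) + \sum_{i=1}^d (-1)^i \indexhigher_{\cT}(x_i) + \indexhigher_{\cT}(s_{d+1}) = \theta\big( [\Image F\gamma_0] \big).
\]
Applying Theorem \ref{thm:index_additive5_intro} to \eqref{equ:exchange_pair2}, which is already in the form required there with $s_0$ as its first term, and re-indexing the $y_i$ (using again that $d$ is odd, so $(-1)^{d+1-i} = (-1)^i$), gives
\[
  \indexhigher_{\cT}(s_0) + \sum_{i=1}^d (-1)^i \indexhigher_{\cT}(y_i) + \indexhigher_{\cT}(s_{d+1}) = \theta\big( [\Image F\gamma_{d+1}] \big).
\]
Applying $\varphi$ and rearranging, both identities become
\[
  f(s_0) + (-1)^{d+1} f(s_{d+1}) = \sum_{i=1}^d (-1)^{i+1} f(x_i) + \varphi\theta\big( [\Image F\gamma_0] \big) = \sum_{i=1}^d (-1)^{i+1} f(y_i) + \varphi\theta\big( [\Image F\gamma_{d+1}] \big).
\]
Since $\Image F\gamma_0, \Image F\gamma_{d+1} \in \mod(\Gamma)$, the hypothesis on $\varphi$ makes both correction terms nonnegative, so the left-hand side is already $\geq$ each of the two quantities inside the maximum in \eqref{equ:frieze_defining_equation}.

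It remains to prove the reverse inequality, which amounts to showing that the smaller of the two correction terms is $0$; for this it is enough to show $\Image F\gamma_0 = 0$ or $\Image F\gamma_{d+1} = 0$. Applying $\cS(t,-)$ to \eqref{equ:exchange_pair1} and \eqref{equ:exchange_pair2}, which turns $(d+2)$-angles into exact sequences, one sees that $\Image F\gamma_0 = 0$ if and only if every morphism $t \to s_0$ factors through $x_1 \to s_0$, and $\Image F\gamma_{d+1} = 0$ if and only if every morphism $t \to s_{d+1}$ factors through $y_d \to s_{d+1}$. I would prove that at least one of these holds using that $\dim_k \cS(s_0, \Sigma^d s_{d+1}) = 1$ together with the $2d$-Calabi--Yau duality, which identifies $\cS(s_0, \Sigma^d s_{d+1})$ with $\dual\cS(s_{d+1}, \Sigma^d s_0)$ and exhibits $\gamma_0$ and $\gamma_{d+1}$ as mutually dual generators of these one-dimensional spaces; the conditions "$F\gamma_0 \neq 0$" and "$F\gamma_{d+1} \neq 0$" then translate into non-degeneracy statements for $\gamma_0$ and for $\gamma_{d+1}$ which, by this duality, cannot hold simultaneously. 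I expect this dichotomy --- for an exchange pair, at most one of the two exchange $(d+2)$-angles survives $\cS(t,-)$ in the relevant spot --- to be the main obstacle; it is the $(d+2)$-angulated counterpart of the step used by Guo in the triangulated case, see \cite[thm.\ 3.1]{G}. Granting it, one correction term equals $\varphi\theta([0]) = 0$, so $f(s_0) + (-1)^{d+1} f(s_{d+1})$ equals the maximum in \eqref{equ:frieze_defining_equation}, which completes the verification.
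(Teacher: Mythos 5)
Your reduction is exactly the one the paper uses: apply Theorem \ref{thm:index_additive5_intro} to the two exchange $( d+2 )$-angles, use that $d$ is odd to get $f( s_0 ) + (-1)^{ d+1 }f( s_{ d+1 } ) = X + \varphi\theta\big( [ \Image F\gamma_0 ] \big) = Y + \varphi\theta\big( [ \Image F\gamma_{ d+1 } ] \big)$ in your notation, and observe that the hypothesis on $\varphi$ makes both correction terms nonnegative, so that everything comes down to showing $F\gamma_0 = 0$ or $F\gamma_{ d+1 } = 0$. The bookkeeping, the re-indexing of the $y_i$, and the additivity of $\indexhigher_{ \cT }$ are all fine.

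However, the dichotomy itself is left unproven, and it is the real content of the theorem (it is Lemma \ref{lem:dichotomy} in the paper); your sketch of it is not yet an argument and, as stated, would not close. Saying that $\gamma_0$ and $\gamma_{ d+1 }$ are ``mutually dual generators'' of the two one-dimensional spaces and that the two non-vanishing conditions ``cannot hold simultaneously by this duality'' is not justified: the Calabi--Yau pairing between $\cS( s_0,\Sigma^d s_{ d+1 } )$ and $\cS( s_{ d+1 },\Sigma^d s_0 )$ by itself places no obstruction on both $F\gamma_0$ and $F\gamma_{ d+1 }$ being nonzero. The missing ingredient is the rigidity of $t$, namely $\cS( t,\Sigma^d t ) = 0$ from Definition \ref{def:OT_cluster_tilting}(i), together with a factorisation argument: if $F\gamma_0 \neq 0$, pick $\tau : t \rightarrow s_0$ with $\gamma_0 \tau \neq 0$; then $( \Sigma^d \tau )^* : \cS( \Sigma^d s_0,\Sigma^{ 2d }s_{ d+1 } ) \rightarrow \cS( \Sigma^d t,\Sigma^{ 2d }s_{ d+1 } )$ is nonzero, so by naturality of the $2d$-Calabi--Yau isomorphisms the map $( \Sigma^d \tau )_* : \cS( s_{ d+1 },\Sigma^d t ) \rightarrow \cS( s_{ d+1 },\Sigma^d s_0 )$ is nonzero, hence surjective because the target is one-dimensional; therefore $\gamma_{ d+1 }$ factors as $s_{ d+1 } \rightarrow \Sigma^d t \xrightarrow{ \Sigma^d \tau } \Sigma^d s_0$, and then every composite $t \rightarrow s_{ d+1 } \xrightarrow{ \gamma_{ d+1 } } \Sigma^d s_0$ vanishes because it passes through $\cS( t,\Sigma^d t ) = 0$, i.e.\ $F\gamma_{ d+1 } = 0$. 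Without this use of rigidity your ``non-degeneracy'' claim has no proof, so as it stands the proposal has a genuine gap precisely at the step you yourself flag as the main obstacle.
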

\endgroup
To prove Theorems \ref{thm:index_additive5_intro} and \ref{thm:frieze_intro} we will need the additional assumption that $\cS$ is ``standard'', in the sense that its structure as $( d+2 )$-angulated category is obtained from \cite[thm.\ 1]{GKO} by realising $\cS$ as a $d$-cluster tilting subcategory, stable under $\Sigma^d$, inside a $k$-linear $\Hom$-finite triangulated category with split idempotents.  See \cite[def.\ 5.15]{OT} which coined the term ``standard''.

The paper is organised as follows:  Section \ref{sec:prelim} is preliminary.  Section \ref{sec:F} investigates a triangulated version of the functor $F$ defined before Theorem \ref{thm:index_additive5_intro}.  Section \ref{sec:index} defines the triangulated index with respect to an $n$-cluster tilting object in the sense of \cite[sec.\ 5.1]{KR}; this is another generalisation of Palu's index on triangulated categories.  Section \ref{sec:theta} shows additivity with an error term of the triangulated index.  Section \ref{sec:higher} uses the triangulated index to prove Theorem \ref{thm:index_additive5_intro}.  Section \ref{sec:friezes} proves Theorem \ref{thm:frieze_intro}.  Section \ref{sec:example} shows the example mentioned at the end of Section \ref{subsec:frieze}.

\section{Preliminaries}
\label{sec:prelim}

This section provides a setup and collects some reminders on the papers \cite{I3}, \cite{IY}, and \cite{KR}.

\begin{Setup}
\label{set:blanket1}
In Sections \ref{sec:prelim} through \ref{sec:friezes} the following is fixed.
\begin{itemize}
\setlength\itemsep{4pt}

  \item  $k$ is a field.

  \item  $\cC$ is a $k$-linear $\Hom$-finite triangulated category with split idempotents and suspension functor $\Sigma$.  
  
  \item  $n \geqslant 2$ is an integer.
  
  \item  $t \in \cC$ is an $n$-cluster tilting object.
  
  \item  $\cT = \add( t )$ is the $n$-cluster tilting subcategory associated to $t$.

  \item  $\Gamma = \End_{ \cC }( t )$ is the endomorphism algebra of $t$.
  \hfill $\Box$

\end{itemize}
\end{Setup}

We recall the definitions of $n$-cluster tilting objects and subcategories from \cite[sec.\ 5.1]{KR}.  They are triangulated versions of \cite[def.\ 2.2]{I3}.

\begin{Definition}
\label{def:cluster_tilting_subcategory}
An {\em $n$-cluster tilting object of the triangulated category} $\cC$ is an object $t$ such that $\cT = \add( t )$ is an {\em $n$-cluster tilting subcategory}, that is, $\cT \subseteq \cC$ is a full subcategory which is functorially finite and satisfies
\[
  \cT = \{\, c \in \cC \,|\, \Ext_{ \cC }^{1..n-1}( \cT,c ) = 0 \,\} = \{\, c \in \cC \,|\, \Ext_{ \cC }^{1..n-1}( c,\cT ) = 0 \,\}.
\]
A $2$-cluster tilting object is simply called a {\em cluster tilting object}.
\hfill $\Box$
\end{Definition}

\begin{Remark}
An $n$-cluster tilting subcategory $\cT$ has the form $\add( t )$ if and only if it has finitely many indecomposable objects.  With small modifications, our results remain valid without this assumption.  Then $\mod( \Gamma )$ must be replaced with the category $\mod( \cT )$ of finitely presented contravariant $k$-linear functors $\cT \rightarrow \mod( k )$, and the functor $F : c \mapsto \cC( t,c )$ with values in $\mod( \Gamma )$, which will appear in Definition \ref{def:F}, must be replaced with the functor $c \mapsto \cC( -,c )|_{ \cT }$ with values in $\mod( \cT )$.
\hfill $\Box$
\end{Remark}

Recall the following definition and lemma from the start of \cite[sec.\ 2]{IY}.

\begin{Definition}
If $\cX, \cY \subseteq \cC$ are full subcategories, then there is a full subcategory
\[
\tag*{$\Box$}
  \cX * \cY = \{\, c \in \cC \,|\, \mbox{there is a triangle $x \rightarrow c \rightarrow y \rightarrow \Sigma x$ with $x \in \cX$, $y \in \cY$} \,\}.
\]
\end{Definition}

\begin{Lemma}
\label{lem:associativity}
The operation $*$ is associative, so the notation $\cX * \cY * \cZ$ makes sense without brackets.
\end{Lemma}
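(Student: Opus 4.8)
The plan is to establish the single inclusion $(\cX * \cY) * \cZ \subseteq \cX * (\cY * \cZ)$ for every triangulated category and all full subcategories, using the octahedral axiom, and then to obtain the reverse inclusion — hence equality, which is exactly associativity — by applying this inclusion to the opposite category $\cC^{\opp}$.

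For the inclusion itself I would start with $c \in (\cX * \cY) * \cZ$. By definition there is a triangle $w \to c \to z \to \Sigma w$ with $w \in \cX * \cY$ and $z \in \cZ$, and in turn a triangle $x \to w \to y \to \Sigma x$ with $x \in \cX$ and $y \in \cY$. I would then apply the octahedral axiom to the composite $x \to w \to c$: feeding in the distinguished triangles on $x \to w$, on $w \to c$, and on $x \to c$, the octahedral axiom produces an object $c'$ fitting into a triangle $x \to c \to c' \to \Sigma x$ together with a triangle $y \to c' \to z \to \Sigma y$. The second triangle shows $c' \in \cY * \cZ$, and then the first shows $c \in \cX * (\cY * \cZ)$, which gives the inclusion.

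To get the reverse inclusion I would pass to $\cC^{\opp}$, which is again triangulated (with suspension $\Sigma^{-1}$) and in which a distinguished triangle of $\cC$ becomes one running the other way; a short check then gives $\cX *^{\opp} \cY = \cY * \cX$ for the $*$-operation computed in $\cC^{\opp}$. Applying the already-proved inclusion in $\cC^{\opp}$ to the triple $\cZ, \cY, \cX$ and translating back through this identity yields $\cX * (\cY * \cZ) \subseteq (\cX * \cY) * \cZ$, and combining the two inclusions proves the lemma. The only point needing care is bookkeeping — invoking the octahedral axiom in precisely the form that delivers the two triangles above, and tracking suspension/sign conventions when dualising — but there is no genuine obstacle here: this is the classical argument for associativity of $*$, and one could equally avoid $\cC^{\opp}$ by a second, symmetric application of the octahedral axiom to a composite $c \to w' \to z$.
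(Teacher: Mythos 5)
Your argument is correct: the octahedral axiom applied to the composite $x \to w \to c$ gives exactly the two triangles you need for $(\cX * \cY) * \cZ \subseteq \cX * (\cY * \cZ)$, and either the opposite-category trick or the symmetric octahedron on $c \to v \to z$ gives the reverse inclusion. The paper offers no proof of its own --- it simply quotes the lemma from the start of \cite[sec.\ 2]{IY} --- and your argument is precisely the standard octahedral-axiom proof underlying that citation, so there is nothing to add.
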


The following is \cite[def.\ 2.2]{IY}.

\begin{Definition}
\label{def:torsion_pair}
A {\em torsion pair} in $\cC$ is a pair $( \cX,\cY )$ of full subcategories $\cX, \cY  \subseteq \cC$ such that $\cC( \cX,\cY ) = 0$ and $\cC = \cX * \cY$.
\hfill $\Box$
\end{Definition}

Part (i) of the following lemma is \cite[thm.\ 3.1(2)]{IY}, and part (ii) is an immediate consequence of part (i).

\begin{Lemma}
\label{lem:Sl}
Let $0 \leqslant \ell \leqslant n-2$ be an integer.
\begin{enumerate}
\setlength\itemsep{4pt}

  \item  There is a torsion pair $( \cT * \cdots * \Sigma^{ \ell }\cT,\Sigma^{ \ell+1 }\cT * \cdots * \Sigma^{ n-1 }\cT )$ in $\cC$.

  \item  $\cT * \cdots * \Sigma^{ \ell }\cT$ is closed under extensions.
  
\end{enumerate}
\end{Lemma}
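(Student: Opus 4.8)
The plan is to obtain part (i) essentially for free from the cited theorem of Iyama and Yoshino, and then extract part (ii) as a formal consequence of having a torsion pair whose first component is closed under direct summands.

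\textbf{Part (i).} I would invoke \cite[thm.\ 3.1(2)]{IY} directly, applied to the subcategory $\cT$. What must be checked is that $\cT$ satisfies the hypotheses there: it is functorially finite, and it satisfies $\cC( \cT,\Sigma^i\cT ) = \Ext_{ \cC }^i( \cT,\cT ) = 0$ for $1 \leqslant i \leqslant n-1$ --- both of these being part of Definition \ref{def:cluster_tilting_subcategory} --- while the remaining hypotheses on the ambient category are guaranteed by Setup \ref{set:blanket1}. This produces the asserted torsion pair; I will also use that its two components are closed under direct summands, which is part of the torsion-pair package in \cite{IY} and in any case is established in the last paragraph below.

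\textbf{Part (ii).} Write $( \cX,\cY )$ for the torsion pair of part (i), so $\cC( \cX,\cY ) = 0$, $\cC = \cX * \cY$, and $\cX$ is closed under direct summands. I claim any such $\cX$ is closed under extensions. Let $c_1 \xrightarrow{ a } c \xrightarrow{ b } c_2 \rightarrow \Sigma c_1$ be a triangle in $\cC$ with $c_1, c_2 \in \cX$, and, using $\cC = \cX * \cY$, choose a triangle $x \xrightarrow{ f } c \xrightarrow{ g } y \rightarrow \Sigma x$ with $x \in \cX$ and $y \in \cY$. The composite $ga$ lies in $\cC( c_1,y ) = 0$, so applying $\cC( -,y )$ to the first triangle shows $g = g'b$ for some $g' \colon c_2 \rightarrow y$; but $g'$ lies in $\cC( c_2,y ) = 0$, so $g = 0$. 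Applying $\cC( c,- )$ to the second triangle, the vanishing of $g$ forces $f$ to be a split epimorphism, so $c$ is a direct summand of $x \in \cX$, and hence $c \in \cX$. Since $\cX = \cT * \cdots * \Sigma^{ \ell }\cT$ by Lemma \ref{lem:Sl}(i), this is exactly part (ii).

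\textbf{The main obstacle.} The one step that is not purely formal is the closure of $\cX$ under direct summands, which I expect to require the most care --- it genuinely can fail for the bare notion of torsion pair in Definition \ref{def:torsion_pair}. I would prove it by induction on $\ell$. For $\ell = 0$ it is trivial, since $\cX = \cT = \add( t )$. For $\ell \geqslant 1$, the self-orthogonality of $\cT$ gives $\cC( \Sigma^i\cT,\Sigma^{ \ell }\cT ) = \Ext_{ \cC }^{ \ell-i }( \cT,\cT ) = 0$ for $0 \leqslant i \leqslant \ell-1$, whence $\cC( \cT * \cdots * \Sigma^{ \ell-1 }\cT,\Sigma^{ \ell }\cT ) = 0$ by the long exact sequences along the defining filtration. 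Because of this Hom-vanishing, the triangle $u \rightarrow c \rightarrow \Sigma^{ \ell }t' \rightarrow \Sigma u$ witnessing a given $c \in \cX = ( \cT * \cdots * \Sigma^{ \ell-1 }\cT ) * \Sigma^{ \ell }\cT$ (with $u \in \cT * \cdots * \Sigma^{ \ell-1 }\cT$, $t' \in \cT$; here I use associativity of $*$, Lemma \ref{lem:associativity}) is unique up to unique isomorphism, hence functorial in $c$. An idempotent of $c$ therefore acts compatibly on this triangle, and since idempotents split in $\cC$ by Setup \ref{set:blanket1}, each direct summand of $c$ acquires its own such triangle, whose left-hand term is a summand of $u$ --- so lies in $\cT * \cdots * \Sigma^{ \ell-1 }\cT$ by the inductive hypothesis --- and whose right-hand term lies in $\Sigma^{ \ell }\cT$ because $\add( t ) = \cT$ is summand-closed. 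Thus every summand of $c$ again lies in $( \cT * \cdots * \Sigma^{ \ell-1 }\cT ) * \Sigma^{ \ell }\cT = \cX$, completing the induction.
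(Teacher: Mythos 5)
Your overall architecture is the paper's: part (i) is exactly the paper's citation of \cite[thm.\ 3.1(2)]{IY}, whose hypotheses hold because $\cT$ is $n$-cluster tilting by Setup \ref{set:blanket1}, and deriving part (ii) formally from the torsion pair together with summand-closure of its first component is precisely the intended ``immediate consequence''. The gap is in your final paragraph, i.e.\ in the very step you flag as the main obstacle. The claim that the triangle $u \rightarrow c \rightarrow \Sigma^{ \ell } t' \rightarrow \Sigma u$ is ``unique up to unique isomorphism, hence functorial in $c$'' does not follow from $\cC( \cT * \cdots * \Sigma^{ \ell-1 }\cT, \Sigma^{ \ell }\cT ) = 0$. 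That vanishing yields the \emph{existence} of a morphism of decomposition triangles over any morphism $c \rightarrow c'$, but \emph{uniqueness} of the induced map $u \rightarrow u'$ would require in addition $\cC( u, \Sigma^{ \ell-1 }t'' ) = 0$: the ambiguity is the image of $\cC( u, \Sigma^{ \ell-1 }t'' )$ under composition with the connecting morphism $\Sigma^{ \ell-1 }t'' \rightarrow u'$. This extra vanishing fails, because $\Sigma^{ \ell-1 }\cT$ is itself the top layer of $u$; already for $\ell = 1$, where $u, u' \in \cT$, a lift of an endomorphism of $c$ along $t_0 \rightarrow c$ is determined only modulo maps factoring as $t_0 \rightarrow t_1 \rightarrow t_0$ through the connecting morphism, which is generally nonzero for a non-split decomposition. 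Consequently an idempotent of $c$ lifts to \emph{some} endomorphism of the triangle, but not canonically and not necessarily to an idempotent one, so the two induced triangles for the summands do not materialise and the induction does not close.

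The fact you need is nevertheless true, and it is available exactly where the paper takes it: Iyama--Yoshino work throughout with subcategories closed under direct summands, and at the start of \cite[sec.\ 2]{IY} they prove that $\cX * \cY$ is closed under direct summands whenever $\cC( \cX,\cY ) = 0$ (their prop.\ 2.1(1)); equivalently, for any torsion pair $( \cX,\cY )$ one has $\cX = \{\, c \in \cC \mid \cC( c,\cY ) = 0 \,\}$, which is visibly closed under extensions. This is why the paper can record part (ii) as an immediate consequence of part (i). Replacing your last paragraph by this citation --- which you half-invoke already when you remark that summand-closure ``is part of the torsion-pair package'' in \cite{IY} --- makes the proposal correct and essentially identical to the paper's proof; your explicit $g = 0$ / split-epimorphism argument for extension-closure is then a fine, slightly more verbose, rendering of that immediate step.
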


The following is \cite[thm.\ 3.1(1)]{IY}.

\begin{Lemma}
\label{lem:T-resolution0}
$\cC = \cT * \cdots * \Sigma^{ n-1 }\cT$.
\end{Lemma}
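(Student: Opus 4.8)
Proof proposal for Lemma \ref{lem:T-resolution0} ($\cC = \cT * \cdots * \Sigma^{n-1}\cT$).

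The plan is to bootstrap from the torsion pair structure established in Lemma \ref{lem:Sl}. The key observation is that a torsion pair $(\cX, \cY)$ in a triangulated category gives $\cC = \cX * \cY$ by definition, so I would aim to peel off one ``layer'' at a time. First I would apply Lemma \ref{lem:Sl}(i) with $\ell = n-2$: this yields a torsion pair $(\cT * \cdots * \Sigma^{n-2}\cT,\ \Sigma^{n-1}\cT)$, hence immediately $\cC = (\cT * \cdots * \Sigma^{n-2}\cT) * \Sigma^{n-1}\cT$. By the associativity of $*$ recorded in Lemma \ref{lem:associativity}, this is exactly $\cT * \cdots * \Sigma^{n-1}\cT$, and we are done --- \emph{provided} we already know how to interpret the truncated expression, i.e.\ provided we know $\cT * \cdots * \Sigma^{n-2}\cT$ is a legitimate subcategory. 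Since $*$ is a binary operation on full subcategories and is associative, $\cT * \cdots * \Sigma^{n-2}\cT$ makes sense as written regardless, so in fact the one application of Lemma \ref{lem:Sl}(i) at $\ell = n-2$ does the whole job in one line.

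To be safe and self-contained, I would phrase it slightly more carefully. Set $\cX = \cT * \cdots * \Sigma^{n-2}\cT$ and $\cY = \Sigma^{n-1}\cT$. By Lemma \ref{lem:Sl}(i) with $\ell = n-2$ (valid since $0 \leqslant n-2$ as $n \geqslant 2$), the pair $(\cX, \cY)$ is a torsion pair in $\cC$. By the definition of a torsion pair (Definition \ref{def:torsion_pair}), $\cC = \cX * \cY = \cT * \cdots * \Sigma^{n-2}\cT * \Sigma^{n-1}\cT$, where the bracketing is immaterial by Lemma \ref{lem:associativity}. This is the claimed equality.

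There is essentially no obstacle here: the statement is a direct corollary of Lemma \ref{lem:Sl}(i), which in turn is quoted from \cite[thm.\ 3.1]{IY}. The only point requiring a moment's care is that the iterated join $\cT * \cdots * \Sigma^{n-2}\cT$ appearing inside the torsion pair of Lemma \ref{lem:Sl}(i) is the same expression (after associativity) as the one obtained by joining all of $\cT, \Sigma\cT, \dots, \Sigma^{n-1}\cT$ and then omitting the last factor --- but this is built into the notation, so no real work is needed. One could alternatively give an inductive proof peeling off $\Sigma^{n-1}\cT$, then $\Sigma^{n-2}\cT$, etc., using Lemma \ref{lem:Sl}(i) for decreasing $\ell$, but this is unnecessary since the single instance $\ell = n-2$ already produces the full decomposition.
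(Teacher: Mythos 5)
Your argument is correct: taking $\ell = n-2$ in Lemma \ref{lem:Sl}(i) gives the torsion pair $( \cT * \cdots * \Sigma^{ n-2 }\cT,\ \Sigma^{ n-1 }\cT )$, and the definition of a torsion pair together with associativity (Lemma \ref{lem:associativity}) immediately yields $\cC = \cT * \cdots * \Sigma^{ n-1 }\cT$; the requirement $n \geqslant 2$ from Setup \ref{set:blanket1} makes $\ell = n-2$ admissible, and in the edge case $n = 2$ your argument degenerates correctly to the torsion pair $( \cT, \Sigma\cT )$. The paper, however, does not prove this lemma at all: it simply quotes it as \cite[thm.\ 3.1(1)]{IY}, in the same way that Lemma \ref{lem:Sl}(i) is quoted as \cite[thm.\ 3.1(2)]{IY}. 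So your route is genuinely different in presentation --- you derive part (1) of Iyama--Yoshino's theorem from part (2) via Definition \ref{def:torsion_pair}, rather than citing it --- which buys a small amount of self-containedness (only one clause of the external theorem needs to be trusted), at essentially no extra cost; ultimately both versions rest on the same result of \cite{IY}. Your closing remark is also right that no induction over $\ell$ is needed: the single instance $\ell = n-2$ suffices, since the iterated join is defined unambiguously by associativity.
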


\section{The homological functor $F$}
\label{sec:F}

This section shows some properties of the following functor.

\begin{Definition}
\label{def:F}
Let $F$ be the homological functor
\[
\tag*{$\Box$}
  F : \cC \rightarrow \mod( \Gamma )
  \;\;,\;\;
  F( - ) = \cC( t,- ).
\]
\end{Definition}

The following lemma will be used several times.

\begin{Lemma}
\label{lem:vanishing}
$F( \Sigma \cT * \cdots * \Sigma^{ n-1 }\cT ) = 0$.
\end{Lemma}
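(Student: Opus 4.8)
The goal is to show that $F$ annihilates every object of $\Sigma\cT * \cdots * \Sigma^{n-1}\cT$, where $F(-) = \cC(t,-)$. The plan is to argue that it suffices to check $F(\Sigma^j\cT) = 0$ for $1 \leqslant j \leqslant n-1$, and then to verify this directly from the $n$-cluster tilting property of $\cT$. First I would recall that $F$ is a homological functor: for any triangle $x \to c \to y \to \Sigma x$ in $\cC$ there is an exact sequence $F(x) \to F(c) \to F(y)$ in $\mod(\Gamma)$. Hence if $c \in \Sigma\cT * \cdots * \Sigma^{n-1}\cT$, then by definition there is a triangle $x \to c \to y \to \Sigma x$ with $x \in \Sigma\cT * \cdots * \Sigma^{n-2}\cT$ (after reindexing) and $y \in \Sigma^{n-1}\cT$, or more simply one can peel off the iterated extension one layer at a time; in either case an easy induction on the number of ``$*$-factors'' reduces the claim to showing $F(\Sigma^j\cT) = 0$ for each individual $j$ in the range $1 \leqslant j \leqslant n-1$. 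The base and inductive steps both use only that $F$ sends a triangle to a three-term exact sequence, together with the fact that if the two outer terms vanish so does the middle one.

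Next I would establish the key vanishing $F(\Sigma^j t') = 0$ for $t' \in \cT$ and $1 \leqslant j \leqslant n-1$. Unravelling the definitions, $F(\Sigma^j t') = \cC(t, \Sigma^j t') = \Ext^j_{\cC}(t, t')$ in the notation of the paper (writing $\Ext^j_{\cC}(a,b) = \cC(a,\Sigma^j b)$). Since $t, t' \in \cT = \add(t)$, and $\cT$ is an $n$-cluster tilting subcategory, Definition \ref{def:cluster_tilting_subcategory} gives $\Ext^{1..n-1}_{\cC}(\cT,\cT) = 0$; in particular $\Ext^j_{\cC}(t,t') = 0$ for all $j$ with $1 \leqslant j \leqslant n-1$. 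This is exactly $F(\Sigma^j t') = 0$. Because $F$ is additive and every object of $\Sigma^j\cT$ is a summand of some $\Sigma^j t^{\oplus m}$, we get $F(\Sigma^j\cT) = 0$ for the whole range of $j$.

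Finally I would assemble these pieces: take $c \in \Sigma\cT * \cdots * \Sigma^{n-1}\cT$ and write it via triangles as an iterated extension of objects in $\Sigma\cT, \Sigma^2\cT, \dots, \Sigma^{n-1}\cT$. Applying $F$ to each triangle and using that $F$ vanishes on each $\Sigma^j\cT$ (just proved) together with exactness of $F$ at the middle term, a short induction yields $F(c) = 0$. I do not expect any genuine obstacle here — the statement is essentially a direct consequence of the defining $\Ext$-vanishing of an $n$-cluster tilting subcategory, packaged through the homological nature of $F$. The only point requiring a little care is making the induction on $*$-factors precise (and invoking Lemma \ref{lem:associativity} so the bracketing of $\Sigma\cT * \cdots * \Sigma^{n-1}\cT$ is unambiguous), but this is routine.
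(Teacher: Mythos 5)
Your proof is correct, but it takes a different route from the paper. The paper's argument is a one-liner: it quotes Lemma \ref{lem:Sl}(i) with $\ell = 0$, which gives the torsion pair $( \cT,\Sigma \cT * \cdots * \Sigma^{ n-1 }\cT )$ in $\cC$ (this is the Iyama--Yoshino result [IY, thm.\ 3.1(2)] already recorded in the preliminaries), and then the vanishing $\cC( t,\Sigma \cT * \cdots * \Sigma^{ n-1 }\cT ) = 0$ is immediate from the first defining property of a torsion pair together with $t \in \cT$. You instead prove the needed Hom-vanishing directly: you reduce by associativity and an induction on the $*$-factors, using that $F = \cC( t,- )$ is homological and exact at the middle term of a triangle, to the single vanishing $F( \Sigma^j \cT ) = 0$ for $1 \leqslant j \leqslant n-1$, which is exactly the $\Ext^{1..n-1}_{\cC}( \cT,\cT ) = 0$ clause of Definition \ref{def:cluster_tilting_subcategory}. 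In effect you re-prove the ``$\cC( \cX,\cY ) = 0$'' half of the torsion pair statement rather than citing it. Your version is more self-contained and uses nothing beyond the definition of an $n$-cluster tilting subcategory and the homological nature of $\cC( t,- )$; the paper's version is shorter and leans on machinery it has already imported and will use elsewhere anyway. Both are complete and correct, and your inductive step (outer terms vanish, hence the middle term vanishes) is sound.
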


\begin{proof}
The lemma means $\cC( t,\Sigma \cT * \cdots * \Sigma^{ n-1 }\cT ) = 0$, but we have $t \in \cT$ and there is a torsion pair $( \cT,\Sigma \cT * \cdots * \Sigma^{ n-1 }\cT )$ in $\cC$ by Lemma \ref{lem:Sl}(i).
\end{proof}

The following lemma is elementary.

\begin{Lemma}
\label{lem:index_additive1.5}
If $a \xrightarrow{} b \xrightarrow{ \beta } c \xrightarrow{ \gamma } \Sigma a$ is a triangle in $\cC$, then $F\gamma = 0$ if and only if $\cC( t',\beta )$ is surjective for each $t' \in \cT$.  
\end{Lemma}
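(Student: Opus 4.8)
The plan is to apply the cohomological functors $\cC( t',- )$ for $t' \in \cT$ to the given triangle and read off the equivalence from the resulting long exact sequences, then use $\cT = \add( t )$ to pass between all of $\cT$ and the single object $t$.

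First I would fix $t' \in \cT$ and apply $\cC( t',- )$ to the triangle $a \rightarrow b \xrightarrow{ \beta } c \xrightarrow{ \gamma } \Sigma a$. Since $\cC$ is triangulated, this yields a long exact sequence of abelian groups
\[
  \cdots \rightarrow \cC( t',b ) \xrightarrow{ \cC( t',\beta ) } \cC( t',c ) \xrightarrow{ \cC( t',\gamma ) } \cC( t',\Sigma a ) \rightarrow \cdots,
\]
and exactness at $\cC( t',c )$ gives $\Image \cC( t',\beta ) = \Ker \cC( t',\gamma )$. Hence $\cC( t',\beta )$ is surjective if and only if $\cC( t',\gamma ) = 0$, for each fixed $t' \in \cT$.

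Second I would observe that $F\gamma = \cC( t,\gamma )$ vanishes if and only if $\cC( t',\gamma ) = 0$ for every $t' \in \cT$. One implication is immediate by taking $t' = t$. For the converse, any $t' \in \add( t )$ is a direct summand of some $t^{ \oplus m }$, so $\cC( t',\gamma )$ is (functorially) a direct summand of $\cC( t^{ \oplus m },\gamma ) \cong \cC( t,\gamma )^{ \oplus m }$; if $\cC( t,\gamma ) = 0$ this forces $\cC( t',\gamma ) = 0$. Combining the two observations proves the lemma. I do not anticipate any genuine obstacle here: the argument is a routine application of the long exact sequence associated to a triangle together with the definition $\cT = \add( t )$.
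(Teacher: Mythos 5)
Your argument is correct: the long exact sequence obtained by applying $\cC(t',-)$ to the triangle gives that $\cC(t',\beta)$ is surjective exactly when $\cC(t',\gamma)=0$, and the passage between the single object $t$ and all of $\cT=\add(t)$ via direct summands of $t^{\oplus m}$ is sound. The paper offers no proof (it declares the lemma elementary), and your write-up is precisely the intended routine argument.
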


The following lemma is a special case of \cite[prop.\ 6.2(3)]{IY}, where $[\Sigma \cT]$ denotes the ideal of morphisms in $\cT * \Sigma \cT$ which factor through an object of $\Sigma \cT$.

\begin{Lemma}
\label{lem:IY623}
The restriction of $F$ to $\cT * \Sigma \cT$ induces an equivalence of categories $( \cT * \Sigma \cT ) / [ \Sigma \cT ] \xrightarrow{ \sim } \mod( \Gamma )$. 
\end{Lemma}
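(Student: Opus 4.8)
The plan is to show that $F$ annihilates the ideal $[ \Sigma\cT ]$, hence descends to a functor $\bar{F} \colon ( \cT * \Sigma\cT ) / [ \Sigma\cT ] \to \mod( \Gamma )$, and then to verify that $\bar{F}$ is dense, full, and faithful. (The quickest alternative is simply to quote \cite[prop.\ 6.2(3)]{IY}, of which this lemma is the announced special case; the sketch below unwinds that reference in the present situation, which has the advantage of displaying where the standing hypothesis $n \geqslant 2$ enters.)

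First I would recall the standard ``projectivization'' fact that $F|_{ \cT } \colon \cT \to \proj( \Gamma )$ is an equivalence, using only that $\cC$ is $\Hom$-finite with split idempotents, $\cT = \add( t )$, and $\End_{ \cC }( t ) = \Gamma$. Next, for $c \in \cT * \Sigma\cT$ fix a triangle $t_1 \xrightarrow{ \mu } t_0 \xrightarrow{ f } c \xrightarrow{ e } \Sigma t_1$ with $t_0, t_1 \in \cT$, supplied by the definition of $\cT * \Sigma\cT$. Applying the homological functor $F$ and using $\cC( t,\Sigma t_1 ) = 0$ (a consequence of $F( \Sigma\cT ) = 0$, Lemma \ref{lem:vanishing}) yields an exact sequence $Ft_1 \xrightarrow{ F\mu } Ft_0 \xrightarrow{ Ff } Fc \to 0$; thus $Fc$ lies in $\mod( \Gamma )$ and has the displayed projective presentation. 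Since any morphism that factors through an object of $\Sigma\cT$ factors through an object annihilated by $F$, the functor $F$ does descend to $\bar{F}$.

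Density: given $M \in \mod( \Gamma )$, choose a projective presentation, transport it across the equivalence $F|_{ \cT }$ to a morphism $\mu \colon t_1 \to t_0$ of $\cT$, and complete $\mu$ to a triangle $t_1 \xrightarrow{ \mu } t_0 \to c \to \Sigma t_1$; then $c \in \cT * \Sigma\cT$ and $Fc$ is the cokernel of $F\mu$, namely $M$. Fullness: given $c, c' \in \cT * \Sigma\cT$ with chosen triangles and $\psi \colon Fc \to Fc'$, projectivity of $Ft_0$ and $Ft_1$ lets me lift $\psi$ to a chain map of the two projective presentations, i.e.\ to $\tilde{g}_0 \colon t_0 \to t_0'$ and $\tilde{g}_1 \colon t_1 \to t_1'$ in $\cT$ with $\tilde{g}_0 \mu = \mu' \tilde{g}_1$ (using faithfulness of $F|_{\cT}$); axiom (TR3) then produces a fill-in morphism of triangles $\gamma \colon c \to c'$, and surjectivity of $Ff$ forces $F\gamma = \psi$. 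Faithfulness: if $F\gamma = 0$ for $\gamma \colon c \to c'$, then since $\cC( \cT,\Sigma\cT ) = 0$ (again Lemma \ref{lem:vanishing}) the composite $\gamma f$ factors as $t_0 \to t_0' \xrightarrow{ f' } c'$; feeding $F\gamma = 0$ into this factorisation and using projectivity of $Ft_0$ gives $\gamma f = 0$, so exactness of $\cC( -,c' )$ on $t_1 \to t_0 \xrightarrow{ f } c \xrightarrow{ e } \Sigma t_1$ yields $\gamma = \gamma' e$ with $\gamma' \colon \Sigma t_1 \to c'$, whence $\gamma \in [ \Sigma\cT ]$.

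I expect the faithfulness step to be the main obstacle — concretely, the deduction of $\gamma f = 0$ from $F\gamma = 0$, which needs a careful diagram chase coordinating the two triangles and the non-unique fill-ins, and which is exactly where the vanishing $\cC( \cT,\Sigma\cT ) = 0$ (hence the assumption $n \geqslant 2$ built into the standing Setup) is indispensable. The remaining steps are routine manipulations of projective presentations and of the long exact sequences coming from the homological functor $F$.
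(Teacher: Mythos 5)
Your proposal is correct, but note that the paper does not prove this lemma at all: it is stated as a special case of \cite[prop.\ 6.2(3)]{IY} and the reference is simply quoted. What you have written is a correct self-contained unwinding of that citation, along the standard ``projectivization'' lines: $F|_{\cT} \colon \cT \to \proj(\Gamma)$ is an equivalence, every $c \in \cT * \Sigma\cT$ acquires a projective presentation $Ft_1 \to Ft_0 \to Fc \to 0$ from its defining triangle (using $F(\Sigma\cT) = 0$), and density, fullness and faithfulness follow by comparing presentations. The faithfulness step you flag does close as you suggest: from $F\gamma = 0$ one gets $F(\gamma f) = Ff' \circ Fh = 0$, projectivity of $Ft_0$ lifts $Fh$ through $F\mu'$, faithfulness of $F|_{\cT}$ transports the lift back to $\cC$, and $f'\mu' = 0$ then forces $\gamma f = 0$, whence $\gamma$ factors through $\Sigma t_1$. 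The only benefit of your route over the paper's is self-containedness and making visible where $\cC(\cT,\Sigma\cT)=0$ (hence $n \geqslant 2$) is used; the cost is about a page of routine verification that the citation replaces.
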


\begin{Lemma}
\label{lem:map_induced_by_F}
For $m \in \cT * \Sigma \cT$ and $c \in \cC$, consider the map
\[
  \cC( m,c ) \xrightarrow{ F( - ) } \Hom_{ \Gamma }( Fm,Fc ).
\]
Then:
\begin{enumerate}
\setlength\itemsep{4pt}
  
  \item  $F( - )$ is surjective.

  \item  The kernel of $F( - )$ consists of the morphisms $m \rightarrow c$ which factor through an object of $\Sigma \cT$.  
\end{enumerate}
\end{Lemma}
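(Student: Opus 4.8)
The plan is to leverage the equivalence of Lemma \ref{lem:IY623}. Since $m \in \cT * \Sigma\cT$, the object $m$ lives in the subcategory on which $F$ induces the equivalence $(\cT * \Sigma\cT)/[\Sigma\cT] \xrightarrow{\sim} \mod(\Gamma)$. The trouble is that the target $c$ is an arbitrary object of $\cC$, not necessarily in $\cT * \Sigma\cT$, so we cannot apply the equivalence directly to compute $\cC(m,c)$. First I would reduce to the case $c \in \cT * \Sigma\cT$ by resolving $c$. By Lemma \ref{lem:T-resolution0} (or more efficiently, using the torsion pairs of Lemma \ref{lem:Sl}), write a triangle $c' \to c \to c'' \to \Sigma c'$ with $c'' \in \cT * \Sigma\cT$ and $c' \in \Sigma^2\cT * \cdots * \Sigma^{n-1}\cT$; one gets this by taking the first step of a $\cT$-resolution of $c$ and invoking $n \geqslant 2$ so that $\Sigma^2\cT * \cdots * \Sigma^{n-1}\cT \subseteq \Sigma\cT * \cdots * \Sigma^{n-1}\cT$.

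Applying $\cC(m,-)$ to this triangle, and applying $F$ together with $\Hom_\Gamma(Fm,-)$ — recalling that $F$ is homological and that $F(c') = 0$ and $F(\Sigma c') = F(\Sigma^2\cT * \cdots) \subseteq F(\Sigma\cT * \cdots * \Sigma^{n-1}\cT) = 0$ by Lemma \ref{lem:vanishing} — I would compare the two long exact sequences. The upshot is that $\cC(m,c) \to \cC(m,c'')$ and $Fc \to Fc''$ both become isomorphisms in the relevant spots (using $\cC(m, \Sigma^{-1}c'') \to \cC(m,c')$: here one needs $\cC(\cT * \Sigma\cT, \Sigma^{-1}(\cT*\Sigma\cT))$ or similar vanishing, which follows from the torsion pair $(\cT * \Sigma\cT, \Sigma^2\cT * \cdots * \Sigma^{n-1}\cT)$ of Lemma \ref{lem:Sl}(i) applied with $\ell = 1$). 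This reduces both (i) and (ii) to the case $c \in \cT * \Sigma\cT$.

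For $c \in \cT * \Sigma\cT$, part (i) is now essentially immediate: the equivalence of Lemma \ref{lem:IY623} says $F$ induces a bijection $\cC(m,c)/[\Sigma\cT](m,c) \xrightarrow{\sim} \Hom_\Gamma(Fm,Fc)$, where $[\Sigma\cT](m,c)$ is the subgroup of morphisms factoring through $\Sigma\cT$; surjectivity of $F(-)$ follows. For part (ii), the kernel of $F(-)$ is by definition of the equivalence exactly $[\Sigma\cT](m,c)$, the morphisms $m \to c$ factoring through an object of $\Sigma\cT$ — but one must check that this description survives the reduction step, i.e. that a morphism $m \to c$ maps to $0$ in $Fc$ iff it factors through $\Sigma\cT$, even before restricting to $c \in \cT*\Sigma\cT$. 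This is where the kernel identification must be tracked carefully through the diagram chase: given $g\colon m \to c$ with $Fg = 0$, push forward to $\bar g\colon m \to c''$, which then factors through $\Sigma\cT$ by the case just handled, say $\bar g = \Sigma t' \circ h$; lift $h$ back along $\cC(m, \Sigma t') \cong \cC(m, ?)$ — actually one uses that $\cC(\Sigma\cT, c') \subseteq \cC(\Sigma\cT, \Sigma^2\cT * \cdots) = 0$ by the torsion pair, so the map $c \to c''$ induces an isomorphism $\cC(\Sigma t', c) \xrightarrow{\sim} \cC(\Sigma t', c'')$ for $\Sigma t' \in \Sigma\cT$, hence $g$ itself factors through $\Sigma t'$. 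Conversely, anything factoring through $\Sigma\cT$ is killed by $F$ by Lemma \ref{lem:vanishing}.

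The main obstacle is bookkeeping: making sure all the vanishing statements invoked in the long-exact-sequence comparison are genuinely available from Lemma \ref{lem:Sl}(i) for the right value of $\ell$, and that the reduction to $c \in \cT * \Sigma\cT$ is functorial enough to transport the kernel description in (ii) and not just the surjectivity in (i). Once the two diagrams are set up side by side, the five-lemma-style chase is routine.
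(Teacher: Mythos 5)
Your reduction step contains a genuine gap. The triangle you start from, $c' \to c \to c'' \to \Sigma c'$ with $c'' \in \cT * \Sigma \cT$ and $c' \in \Sigma^2 \cT * \cdots * \Sigma^{ n-1 }\cT$, is not what the cited lemmas provide: the torsion pair of Lemma \ref{lem:Sl}(i) for $\ell = 1$ is $( \cT * \Sigma \cT, \Sigma^2 \cT * \cdots * \Sigma^{ n-1 }\cT )$, so the decomposition it yields puts the $\cT * \Sigma \cT$-part as the \emph{sub}object and the $\Sigma^2 \cT * \cdots * \Sigma^{ n-1 }\cT$-part as the quotient, i.e.\ $x \to c \to y \to \Sigma x$ with $x \in \cT * \Sigma \cT$; neither Lemma \ref{lem:T-resolution0} nor the first step of a $\cT$-resolution gives your reversed order, and no argument is offered that $\cC = ( \Sigma^2 \cT * \cdots * \Sigma^{ n-1 }\cT ) * ( \cT * \Sigma \cT )$. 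Worse, even granting your triangle, the vanishing you invoke is false: if $c' \in \Sigma^2 \cT * \cdots * \Sigma^{ n-1 }\cT$ then $\Sigma c' \in \Sigma^3 \cT * \cdots * \Sigma^{ n }\cT$ (not $\Sigma^2 \cT * \cdots$), and neither $F$ nor $\cC( m,- )$ for $m \in \cT * \Sigma \cT$ vanishes on $\Sigma^n \cT$, because an $n$-cluster tilting subcategory only forces $\Ext_{ \cC }^{1..n-1}( \cT,\cT ) = 0$, not $\Ext^n_{ \cC }( \cT,\cT ) = 0$. So the long-exact-sequence comparison does not give $Fc \cong Fc''$ or $\cC( m,c ) \cong \cC( m,c'' )$, and the chase breaks down exactly at the ``bookkeeping'' you flagged.

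The strategy is repairable: with the correct triangle $x \xrightarrow{ \chi } c \to y \to \Sigma x$ from Lemma \ref{lem:Sl}(i) (for $n \geqslant 3$; if $n = 2$ then $\cC = \cT * \Sigma \cT$ and no reduction is needed), one has $Fy = F( \Sigma^{ -1 }y ) = 0$ by Lemma \ref{lem:vanishing} since $\Sigma^{ -1 }y \in \Sigma \cT * \cdots * \Sigma^{ n-2 }\cT$, so $F\chi$ is an isomorphism, while $\cC( m,y ) = 0$ by the torsion pair, so $\cC( m,x ) \to \cC( m,c )$ is surjective; this transports both (i) and (ii) from the case $c \in \cT * \Sigma \cT$, which Lemma \ref{lem:IY623} handles as you say. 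The paper itself proceeds differently and more cheaply: part (i) is quoted from \cite[lem.\ 1.11(ii)]{HJ}, and part (ii) is proved directly for arbitrary $c$ with no reduction at all, by resolving the \emph{source} $m$ rather than the target $c$: choosing a triangle $t_1 \to t_0 \xrightarrow{ \tau'_0 } m \xrightarrow{ \mu' } \Sigma t_1$ with $t_0, t_1 \in \cT$ (which exists since $m \in \cT * \Sigma \cT$), the hypothesis $F\mu = 0$ gives $\mu \tau'_0 = 0$, so $\mu$ factors through $\mu'$ and hence through $\Sigma t_1 \in \Sigma \cT$; the converse direction is Lemma \ref{lem:vanishing}, exactly as in your last sentence.
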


\begin{proof}
Part (i) is a special case of \cite[lem.\ 1.11(ii)]{HJ}.

For part (ii), suppose that $m \xrightarrow{ \mu } c$ factors as $m \rightarrow \Sigma t_1 \rightarrow c$ for some $t_1 \in \cT$.  Then $F\mu = 0$ because $F( \Sigma t_1 ) = 0$ by Lemma \ref{lem:vanishing}.

Conversely, suppose that $m \xrightarrow{ \mu } c$ satisfies $F\mu = \cC( t,\mu ) = 0$.  Then $\cC( t_0,\mu ) = 0$ for each $t_0 \in \cT$.  There is a triangle $t_1 \rightarrow t_0 \xrightarrow{ \tau'_0 } m \xrightarrow{ \mu' } \Sigma t_1$ with $t_0, t_1 \in \cT$ since $m \in \cT * \Sigma \cT$.  Since $\cC( t_0,\mu ) = 0$ we have $\mu\tau'_0 = 0$, so $\mu$ factors as $m \xrightarrow{ \mu' } \Sigma t_1 \rightarrow c$.
\end{proof}

\begin{Lemma}
\label{lem:image_induced_by_F}
For each morphism $k \xrightarrow{ \kappa } a$ in $\cC$ with $k \in \cT * \Sigma \cT$, there is a commutative diagram
\begingroup
\[
  \vcenter{
  \xymatrix 
  {
    k \ar[rr]^{ \kappa } \ar[rd]_{ \sigma } && a \\
    & m \ar[ur]_{ \iota } &
                        }
          }
\]
\endgroup
with $m \in \cT * \Sigma \cT$, where $F\sigma$ is surjective, $F\iota$ injective.  Hence $Fm$ can be identified with $\Image F\kappa$. 
\end{Lemma}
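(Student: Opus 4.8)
The plan is to factor $\kappa$ through an object of $\cT * \Sigma \cT$ realizing the image of $F\kappa$, using the equivalence of Lemma \ref{lem:IY623}. First I would note that by Lemma \ref{lem:map_induced_by_F}(i) applied to $k \in \cT * \Sigma \cT$ and $a \in \cC$, the morphism $\kappa$ maps under $F$ to a $\Gamma$-linear map $F\kappa : Fk \to Fa$, and every $\Gamma$-linear map arises this way; but the key point I need is a genuine factorization inside $\cC$, not just in $\mod(\Gamma)$. So I would instead work in the quotient category $(\cT * \Sigma \cT)/[\Sigma \cT]$, which Lemma \ref{lem:IY623} identifies with $\mod(\Gamma)$ via $F$. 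However $a$ need not lie in $\cT * \Sigma \cT$, so I cannot directly factor in that quotient; the right move is to first build a "$\cT * \Sigma \cT$-approximation" of the morphism.

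Here is the concrete route I would follow. Since $k \in \cT * \Sigma \cT$, write down a triangle $t_1 \to t_0 \xrightarrow{\tau_0'} k \xrightarrow{\mu'} \Sigma t_1$ with $t_0,t_1 \in \cT$. The composite $\kappa \tau_0' : t_0 \to a$; since $t_0 \in \cT$ and $\cC = \cT * \cdots * \Sigma^{n-1}\cT$ by Lemma \ref{lem:T-resolution0}, choose a triangle presenting $a$, namely a triangle $a' \to t_0'' \to a \to \Sigma a'$ — more precisely I want a triangle $\Sigma^{-1}a \to w \to t_0' \xrightarrow{p} a \to$ exhibiting the $\cT$-part, i.e. with $t_0' \in \cT$ and $w \in \Sigma\cT * \cdots * \Sigma^{n-1}\cT$, which exists by the torsion pair $(\cT, \Sigma\cT * \cdots * \Sigma^{n-1}\cT)$ of Lemma \ref{lem:Sl}(i). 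Because $\cC(t_0, w) = 0$ and more generally $\cC(\cT, \Sigma\cT * \cdots * \Sigma^{n-1}\cT)=0$, the map $\kappa\tau_0' : t_0 \to a$ lifts through $p$ to a map $t_0 \to t_0'$; combining, $\kappa$ itself factors through the object $m := \mathrm{cone}$-type gluing of $t_0' \to a$ along $t_1 \to t_0$, which I can arrange to lie in $\cT * \Sigma\cT$. Concretely: form the pushout/octahedron so that $m$ sits in a triangle $t_1 \to t_0' \to m \to \Sigma t_1$, hence $m \in \cT * \Sigma\cT$, with a factorization $k \xrightarrow{\sigma} m \xrightarrow{\iota} a$ of $\kappa$.

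It then remains to check $F\sigma$ surjective and $F\iota$ injective. For $F\sigma$: the map $\sigma : k \to m$ is compatible with the two triangles over $t_1 \to \bullet \to \bullet \to \Sigma t_1$ via a map $t_0 \to t_0'$, and applying the homological functor $F$ together with Lemma \ref{lem:vanishing} (so $F\Sigma t_1 = 0$) gives a commuting ladder of exact sequences $Ft_0 \to Fk \to 0$ and $Ft_0' \to Fm \to 0$; surjectivity of $F\sigma$ follows from surjectivity of $Ft_0 \to Ft_0'$, which I get from the construction (the lift was chosen over the $\cT$-approximation $t_0' \to a$, and I can take $t_0 \to t_0'$ to be split surjective, or at least $F$-surjective, by enlarging $t_0$ if necessary — alternatively build $t_0'$ as a summand setup). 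For $F\iota$: by Lemma \ref{lem:map_induced_by_F}(ii), $\Ker F(-) $ on $\cC(m,a)$ consists of maps factoring through $\Sigma\cT$; I must show $\iota$ does not — equivalently that $\iota$ is "left minimal enough". This I arrange by passing to a direct summand: replace $m$ by the summand on which $\iota$ has no $\Sigma\cT$-factoring part, using that the radical-type decomposition of $m$ in $\cT * \Sigma\cT$ is controlled by $[\Sigma\cT]$, and that killing the kernel of $F\iota$ corresponds under Lemma \ref{lem:IY623} to replacing $Fm$ by $\Image F\kappa \subseteq Fa$. The identification "$Fm \cong \Image F\kappa$" is then immediate: $F\kappa = F\iota \circ F\sigma$ with $F\sigma$ epi and $F\iota$ mono, so $\Image F\kappa = \Image F\iota \cong Fm$.

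The main obstacle I anticipate is the injectivity of $F\iota$: surjectivity of $F\sigma$ and the mere existence of \emph{some} factorization through $\cT * \Sigma\cT$ are soft, but to pin down $Fm$ as exactly $\Image F\kappa$ rather than something larger I need to shrink $m$ by splitting off the part of $\iota$ that dies under $F$, and justifying that this can be done within $\cT * \Sigma\cT$ (so that the resulting $m$ still lies there) is the delicate point. I expect this to come down to applying Lemma \ref{lem:IY623}: choose in the quotient $(\cT*\Sigma\cT)/[\Sigma\cT] \simeq \mod(\Gamma)$ a factorization $Fk \twoheadrightarrow \Image F\kappa \hookrightarrow Fa$ of $F\kappa$ and lift the middle object and the two maps back to $\cT * \Sigma\cT$, then verify by Lemma \ref{lem:map_induced_by_F} that the lifted maps have the required surjectivity/injectivity after applying $F$ and that the lift of $\Image F\kappa \hookrightarrow Fa$ can be taken with target $a$ — the last using that $a$, while not in $\cT*\Sigma\cT$, still receives all $\Gamma$-module maps from objects of $\cT * \Sigma\cT$ by Lemma \ref{lem:map_induced_by_F}(i).
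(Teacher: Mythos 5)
Your second, fallback route (the last paragraph) is the paper's actual strategy: use Lemma \ref{lem:IY623} to realise $\Image F\kappa$ as $Fm'$ for some $m' \in \cT * \Sigma\cT$, and use Lemma \ref{lem:map_induced_by_F}(i) to lift $Fk \twoheadrightarrow \Image F\kappa$ and $\Image F\kappa \hookrightarrow Fa$ to morphisms $k \xrightarrow{\sigma'} m' \xrightarrow{\iota'} a$ in $\cC$. But you stop exactly where the real issue begins: these lifts only satisfy $F(\iota'\sigma') = F\kappa$, i.e.\ the triangle commutes after applying $F$, whereas the lemma requires $\kappa = \iota\sigma$ in $\cC$ itself (this genuine commutativity is what later proofs use). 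The paper closes this gap by observing that $F(\kappa - \iota'\sigma') = 0$, so by Lemma \ref{lem:map_induced_by_F}(ii) the difference factors as $k \xrightarrow{\sigma''} \Sigma t'' \xrightarrow{\iota''} a$ with $t'' \in \cT$, and then replacing $m'$ by $m = m' \oplus \Sigma t''$ with $\sigma = \begin{psmallmatrix}\sigma'\\ \sigma''\end{psmallmatrix}$, $\iota = (\iota'\ \iota'')$; since $F(\Sigma t'') = 0$ by Lemma \ref{lem:vanishing}, this repairs commutativity in $\cC$ without destroying surjectivity of $F\sigma$ or injectivity of $F\iota$. Your proposal never performs (or even identifies) this correction, so the commutative diagram of the lemma is not actually produced.

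Your main concrete route (via the triangle $t_1 \to t_0 \to k \to \Sigma t_1$, the torsion-pair approximation $t_0' \to a$, and the cone $m$ of $t_1 \to t_0'$) has further problems that the hedges in your text do not resolve. First, with that choice of $m$ there is no reason $F\sigma$ is surjective: the lift $g : t_0 \to t_0'$ is only constrained by $pg = \kappa\tau_0'$, so $\Image Fg$ is merely some submodule of $Ft_0'$ mapping onto $\Image F\kappa$, not all of $Ft_0'$ (e.g.\ for $\kappa = 0$ one may take $g = 0$, and then $F\sigma = 0$ while $Fm \neq 0$); ``enlarging $t_0$'' cannot fix this, since the problem is that $t_0'$ is an approximation of $a$, not of $\Image F\kappa$. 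Second, your plan to ``shrink $m$ by splitting off the part of $\iota$ that dies under $F$'' is not a legitimate operation: $\Ker F\iota$ is a submodule of $Fm$, not in general a direct summand, so it does not correspond to a direct summand of $m$ that can be split off inside $\cT * \Sigma\cT$. The clean way to get $Fm \cong \Image F\kappa$ is precisely the paper's order of operations: first choose $m'$ with $Fm' \cong \Image F\kappa$ via Lemma \ref{lem:IY623}, then lift the maps, then correct by a $\Sigma\cT$ summand as above.
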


\begin{proof}
There is a canonical factorisation
\begingroup
\[
  \vcenter{
  \xymatrix 
  {
    Fk \ar[rr]^{ F\kappa } \ar@{->>}[rd]_{ \rho } && Fa. \\
    & \Image F\kappa \ar@{^(->}[ur]_{ \theta } &
                        }
          }
\]
\endgroup
By Lemma \ref{lem:IY623} there exists $m' \in \cT * \Sigma \cT$ such that $Fm' \cong \Image F\kappa$, and by Lemma \ref{lem:map_induced_by_F}(i) there are $k \xrightarrow{ \sigma' } m' \xrightarrow{ \iota' } a$ such that
\begingroup
\begin{equation}
\label{equ:image_induced_by_F1}
  \vcenter{
  \xymatrix @+0.35pc 
  {
    Fk \ar[rr]^{ F\kappa } \ar@{->>}[rd]_<<<<<{ F\sigma' } && Fa \\
    & Fm' \ar@{^(->}[ur]_>>>>>>{ F\iota' } &
                        }
          }
\end{equation}
\endgroup
can be identified with the previous diagram.  Then $F( \kappa - \iota'\sigma' ) = F\kappa - \theta\rho = 0$, so by Lemma \ref{lem:map_induced_by_F}(ii) the morphism $\kappa - \iota'\sigma'$ factors as $k \xrightarrow{ \sigma'' } \Sigma t'' \xrightarrow{ \iota'' } a$ for some $t'' \in \cT$, whence $\kappa = \iota'\sigma' + \iota''\sigma''$.

This means that the following diagram is commutative.
\begingroup
\begin{equation}
\label{equ:image_induced_by_F2}
  \vcenter{
  \xymatrix 
  {
    k \ar[rr]^{ \kappa } \ar[rd]_<<<<<{ \begin{psmallmatrix} \sigma' \\ \sigma'' \end{psmallmatrix} } && a \\
    & m' \oplus \Sigma t'' \ar[ur]_<<<<<<<<{ ( \iota' \; \iota'' ) } &
                        }
          }
\end{equation}
\endgroup
It is clear that $m' \oplus \Sigma t'' \in \cT * \Sigma \cT$, and applying $F$ to the diagram \eqref{equ:image_induced_by_F2} recovers \eqref{equ:image_induced_by_F1} because $F( \Sigma t'' ) = 0$ by Lemma \ref{lem:vanishing}, so $F\begin{psmallmatrix} \sigma' \\ \sigma'' \end{psmallmatrix}$ is surjective, $F( \iota' \; \iota'' )$ injective.  Hence \eqref{equ:image_induced_by_F2} can be used as the diagram in the lemma. 
\end{proof}

The following lemma generalises \cite[lem.\ 3.1]{Palu} to the case where $t$ is an $n$-cluster tilting object.  The proof remains the same, but we include it for the convenience of the reader.

\begin{Lemma}
\label{lem:short_exact_sequence}
Each short exact sequence in $\mod( \Gamma )$ has the form $0 \rightarrow Fk \xrightarrow{ F\kappa } F\ell \xrightarrow{ F\lambda } Fm \rightarrow 0$ up to isomorphism, where
\begin{equation}
\label{equ:short_exact_sequence1}
  k \xrightarrow{ \kappa } \ell \xrightarrow{ \lambda } m \xrightarrow{} \Sigma k
\end{equation}
is a triangle in $\cC$ with $k,\ell,m \in \cT * \Sigma \cT$.  
\end{Lemma}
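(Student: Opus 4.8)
The plan is to start from an arbitrary short exact sequence $0 \to Fk \xrightarrow{\bar\kappa} F\ell \xrightarrow{\bar\lambda} Fm \to 0$ in $\mod(\Gamma)$. By Lemma \ref{lem:IY623}, since $F$ restricted to $\cT * \Sigma\cT$ is (up to the ideal $[\Sigma\cT]$) an equivalence onto $\mod(\Gamma)$, I may choose objects $k, \ell, m \in \cT * \Sigma\cT$ realising $Fk, F\ell, Fm$, and then by Lemma \ref{lem:map_induced_by_F}(i) lift $\bar\kappa$ and $\bar\lambda$ to genuine morphisms $\kappa : k \to \ell$ and $\lambda : \ell \to m$ in $\cC$ with $F\kappa = \bar\kappa$, $F\lambda = \bar\lambda$. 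The composite $\lambda\kappa$ satisfies $F(\lambda\kappa) = \bar\lambda\bar\kappa = 0$, so by Lemma \ref{lem:map_induced_by_F}(ii) it factors through an object of $\Sigma\cT$; after adjusting $\kappa$ by a morphism factoring through $\Sigma\cT$ (which does not change $F\kappa$, by Lemma \ref{lem:vanishing}), I can arrange $\lambda\kappa = 0$ exactly — the standard trick being to absorb the factorisation into $\ell$ by replacing $\ell$ with $\ell \oplus \Sigma t'$ and $m$ with... actually the cleaner route is: complete $\kappa$ to a triangle $k \xrightarrow{\kappa} \ell \xrightarrow{\mu} c \to \Sigma k$ in $\cC$, and note $c$ need not lie in $\cT * \Sigma\cT$ yet.

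So the core step is: after fixing $\kappa : k \to \ell$ with $F\kappa$ injective (which I arrange using the argument above, perhaps replacing $k$ by a direct summand situation or invoking that $F\kappa = \bar\kappa$ is already injective), complete it to a triangle $k \xrightarrow{\kappa} \ell \xrightarrow{\mu} c \xrightarrow{\nu} \Sigma k$. Applying the homological functor $F$ gives a long exact sequence; since $F\kappa$ is injective, the piece $0 \to Fk \xrightarrow{F\kappa} F\ell \xrightarrow{F\mu} Fc$ is exact, and moreover the connecting map into $F(\Sigma k)$ kills $Fc$ coming from... more precisely, $F\mu$ is surjective onto $\Ker\big(F\nu : Fc \to F\Sigma k\big)$. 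I then want to replace $c$ by an object $m \in \cT * \Sigma\cT$. For this I apply Lemma \ref{lem:image_induced_by_F} (or directly Lemma \ref{lem:map_induced_by_F} together with Lemma \ref{lem:IY623}) to the morphism $\mu : \ell \to c$ with $\ell \in \cT * \Sigma\cT$: this produces a factorisation $\ell \xrightarrow{\lambda} m \xrightarrow{\iota} c$ with $m \in \cT * \Sigma\cT$, $F\lambda$ surjective, $F\iota$ injective, so that $Fm \cong \Image F\mu$. Then $0 \to Fk \xrightarrow{F\kappa} F\ell \xrightarrow{F\lambda} Fm \to 0$ is exact, and $Fm = \Image F\mu \cong F\ell / \Image F\kappa \cong Fm$ agrees (up to isomorphism) with the original cokernel, so this sequence is isomorphic to the one we started with.

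It remains to produce the triangle \eqref{equ:short_exact_sequence1}, i.e. a triangle $k \xrightarrow{\kappa} \ell \xrightarrow{\lambda} m \to \Sigma k$ with the chosen $\kappa$ and $\lambda$. This does not come for free from the construction above (we only have $\lambda = $ the first factor of $\mu$), so the last step is to check that $\lambda\kappa = 0$ and that the triangle on $\kappa$ can be taken to have third term $m$ rather than $c$ — equivalently, that $\iota : m \to c$ is (or can be replaced by) an isomorphism after the adjustments. The way Palu's original argument \cite[lem.\ 3.1]{Palu} handles this, which I would follow verbatim at the $n$-cluster-tilting level, is to choose $\kappa$ and $\lambda$ first with $\lambda\kappa = 0$ (using the factorisation-through-$\Sigma\cT$ correction), form the triangle on $\kappa$, get the induced map $c \to m$, and show it is a split epi whose kernel lies in $\Sigma\cT$; absorbing that kernel, $c$ and $m$ become isomorphic in the relevant sense and the triangle has the required shape. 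The main obstacle is precisely this bookkeeping of correction terms factoring through $\Sigma\cT$: one must verify that applying $F$ really does annihilate them (which is exactly Lemma \ref{lem:vanishing}, since $\Sigma t' \in \Sigma\cT \subseteq \Sigma\cT * \cdots * \Sigma^{n-1}\cT$) and that the adjustments can be made simultaneously consistent, so that the final triangle lands with all three nonzero terms in $\cT * \Sigma\cT$. Since the paper explicitly says the proof is unchanged from \cite[lem.\ 3.1]{Palu}, I expect no genuinely new difficulty beyond confirming that each cited lemma (especially Lemmas \ref{lem:vanishing}, \ref{lem:map_induced_by_F}, \ref{lem:IY623}, \ref{lem:image_induced_by_F}) holds at the $n$-cluster-tilting level, which they do as stated.
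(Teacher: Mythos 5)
Your first half — lifting $\bar\kappa,\bar\lambda$ via Lemmas \ref{lem:IY623} and \ref{lem:map_induced_by_F}, taking the cone $c$ of $\kappa$ and factoring $\mu\colon\ell\to c$ through some $m\in\cT*\Sigma\cT$ by Lemma \ref{lem:image_induced_by_F} — is correct, but it only reproduces objects $k,\ell,m\in\cT*\Sigma\cT$ with a short exact sequence $0\to Fk\to F\ell\to Fm\to 0$ isomorphic to the given one, which is essentially what Lemma \ref{lem:IY623} already provides. The entire content of the lemma is the existence of the triangle \eqref{equ:short_exact_sequence1} with these three terms, and that is exactly where your argument has a gap. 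Your plan for it — arrange $\lambda\kappa=0$, complete $\kappa$ to a triangle $k\xrightarrow{\kappa}\ell\xrightarrow{\mu}c\xrightarrow{\nu}\Sigma k$, and claim the comparison map $\varphi\colon c\to m$ with $\varphi\mu=\lambda$ is a split epimorphism with kernel in $\Sigma\cT$ that can be ``absorbed'' — is unproved and is not available in general. Nothing in the construction controls the connecting morphism $\nu$: applying $F$ only gives $\Image F\mu=\Ker F\nu$, so when $F(\Sigma k)\neq 0$ (which happens for $n=2$) the module $Fc$ can be strictly larger than $Fm$, and no summand in $\Sigma\cT$ can account for the difference. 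Concretely, for the sequence $0\to 0\to L\xrightarrow{\id}L\to 0$ the choices $k=\Sigma t'$ with $0\neq t'\in\cT$ (legitimate, since $F(\Sigma t')=0$) and $\kappa=0$ give $c=\ell\oplus\Sigma^{2}t'$; for $n=2$ in the $2$-Calabi--Yau setting of Palu one has $F(\Sigma^{2}t')\cong\dual\cC(t',t)\neq 0$, so $\varphi$ is a split epimorphism whose complement is neither in $\Sigma\cT$ nor killed by $F$. And even when $F$ does kill the discrepancy (for $n\geqslant 3$ one has $F\Sigma(\cT*\Sigma\cT)=0$ by Lemma \ref{lem:vanishing}), absorption is impossible with $\kappa$ fixed: the cone of $\kappa$ is determined up to isomorphism, so a triangle $k\xrightarrow{\kappa}\ell\to m\to\Sigma k$ exists only if $c\cong m$, whereas in the example above $c=\ell\oplus\Sigma^{2}t'$ does not even lie in $\cT*\Sigma\cT$. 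The repair must modify $\ell$ and $\kappa$ themselves, not the cone.

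That modification is the missing idea, and it is what Palu's proof (which the paper reproduces) actually does; your appeal to ``Palu's argument, followed verbatim'' describes a different and invalid step. One lifts only the monomorphism to $\kappa'\colon k\to\ell'$, picks a triangle $t_1\to t_0\to k\xrightarrow{\delta}\Sigma t_1$ with $t_0,t_1\in\cT$, and applies the octahedral axiom to the morphism $k\to\ell'\oplus\Sigma t_1$ with components $\kappa'$ and $\delta$. Adding the summand $\Sigma t_1$ changes nothing after applying $F$ (Lemma \ref{lem:vanishing}), but it forces the connecting morphism $m\to\Sigma k$ of the resulting triangle to factor through $\Sigma t_0$, so $F$ of it vanishes and $F\lambda$ is surjective; the octahedron moreover exhibits $m$ as an extension of an object of $\Sigma\cT$ by $\ell'$, whence $m\in(\cT*\Sigma\cT)*\Sigma\cT=\cT*\Sigma\cT$. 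In particular the epimorphism $\lambda$ is produced by the construction rather than lifted in advance. Without this device, or an equivalent way of simultaneously killing the connecting morphism under $F$ and keeping the cone in $\cT*\Sigma\cT$, your proof does not go through.
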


\begin{proof}
Given a short exact sequence $0 \rightarrow K \rightarrow L \rightarrow M \rightarrow 0$ in $\mod( \Gamma )$, Lemma \ref{lem:IY623} implies that there is a morphism $k \xrightarrow{ \kappa' } \ell'$ in $\cT * \Sigma \cT$ such that $Fk \xrightarrow{ F\kappa' } F\ell'$ can be identified with $K \rightarrow L$.  Pick a triangle $t_1 \rightarrow t_0 \rightarrow k \xrightarrow{ \delta } \Sigma t_1$ in $\cC$ with $t_i \in \cT$.  The octahedral axiom gives the following diagram where each row and column is a triangle.
\[
  \xymatrix @-1.6pc @! {
    \Sigma^{ -1 }k \ar[r] \ar[d] & t_1 \ar[r] \ar[d] & t_0 \ar[r] \ar[d] & k \ar[d] \\
    0 \ar[r] \ar[d] & \ell' \ar@{=}[r] \ar[d] & \ell' \ar[r] \ar[d] & 0 \ar[d] \\
    k \ar[r]^-{ \begin{psmallmatrix} \kappa' \\ \delta \end{psmallmatrix} } \ar@{=}[d] & \ell' \oplus \Sigma t_1 \ar[r]^-{ \lambda } \ar[d]_{ ( 0 \; \id ) } & m \ar[r]^{ \mu } \ar[d] & \Sigma k \ar@{=}[d] \\
    k \ar[r]_-{ \delta } & \Sigma t_1 \ar[r] & \Sigma t_0 \ar[r] & \Sigma k \\
                      }
\]
We claim that the third row of the diagram can be used as the triangle \eqref{equ:short_exact_sequence1}:  We have $F( \Sigma t_1 ) = 0$ by Lemma \ref{lem:vanishing}, so the morphism $Fk \xrightarrow{ F\begin{psmallmatrix} \kappa' \\ \delta \end{psmallmatrix} } F( \ell' \oplus \Sigma t_1 )$ can also be identified with $K \rightarrow L$.  Similarly, $F( \Sigma t_0 ) = 0$ while $\mu$ factors through $\Sigma t_0$, so $F\mu = 0$ whence $F\lambda$ is surjective.  Finally, $\ell' \in \cT * \Sigma \cT$ and $\Sigma t_i \in \Sigma \cT$, so  $\ell' \oplus \Sigma t_1 \in \cT * \Sigma \cT$ is clear, and the third column of the diagram shows $m \in ( \cT * \Sigma \cT ) * \Sigma \cT = \cT * \Sigma \cT$.  The last equality is by Lemma \ref{lem:associativity} and the easy observation $\Sigma \cT * \Sigma \cT = \Sigma \cT$.  
\end{proof}

\section{The triangulated index with respect to an $n$-cluster tilting object}
\label{sec:index}

This section introduces the triangulated index with respect to an $n$-cluster tilting object, see Definition \ref{def:index}, and shows some basic properties.  Like the $( d+2 )$-angulated index, it is a generalisation of Palu's index on triangulated categories, see \cite[sec.\ 2.1]{Palu}, which arises as the special case $n = 2$.

\begin{Definition}
\label{def:tower}
A {\em tower of triangles} in $\cC$ is a diagram of the form
\[
  \xymatrix @-1.7pc @! {
    & c_{ m-1 } \ar[rr] \ar[dr] & & c_{ m-2 } \ar[rr] \ar[dr] & & \cdots \ar[rr] & & c_2 \ar[rr] \ar[dr] & & c_1 \ar[dr] \\
    c_m \ar[ur] && v_{ m-1.5 } \ar[ur] \ar@{~>}[ll] & & v_{ m-2.5 } \ar@{~>}[ll] & \cdots & v_{ 2.5 } \ar[ur] & & v_{ 1.5 } \ar[ur] \ar@{~>}[ll] && c_0 \lefteqn{,} \ar@{~>}[ll] \\
               }
\]
where $m \geqslant 2$ is an integer, a wavy arrow $\xymatrix { x \ar@{~>}[r] & y }$ signifies a morphism $\xymatrix { x \ar[r] & \Sigma y, }$ each oriented triangle is a triangle in $\cC$, and each non-oriented triangle is commutative.

If $m = 2$ then the tower is a single triangle $c_2 \rightarrow c_1 \rightarrow c_0 \rightarrow \Sigma c_0$, and there are no objects $v_{ \ast }$.  
\hfill $\Box$
\end{Definition}

\begin{Lemma}
\label{lem:T-resolution1}
For $c \in \cC$ there is a tower of triangles in $\cC$,
\[
  \xymatrix @-1.4pc @! {
    & t_{ n-2 } \ar[rr] \ar[dr]^{ \tau_{ n-2 } } & & t_{ n-3 } \ar[rr] \ar[dr]^{ \tau_{ n-3 } } & & \cdots \ar[rr] & & t_1 \ar[rr] \ar[dr]^{ \tau_1 } & & t_0 \ar[dr]^{ \tau_0 } \\
    t_{ n-1 } \ar[ur] && v_{ n-2.5 } \ar[ur] \ar@{~>}[ll] & & v_{ n-3.5 } \ar@{~>}[ll] & \cdots & v_{ 1.5 } \ar[ur] & & v_{ 0.5 } \ar[ur] \ar@{~>}[ll] && c \lefteqn{,} \ar@{~>}[ll] \\
               }
\]
where each $\tau_i$ is a $\cT$-cover; in particular, $t_i \in \cT$ for each $i$.  The $t_i$ are determined up to isomorphism.
\end{Lemma}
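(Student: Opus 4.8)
The plan is to build the tower by iterating $\cT$-covers, using Lemma~\ref{lem:T-resolution0} to ensure that the process terminates after $n-1$ steps, and to read off uniqueness from the uniqueness of minimal approximations. For existence I would prove, by induction on $\ell$, that for $0 \leqslant \ell \leqslant n-1$ every $c \in \cT * \Sigma\cT * \cdots * \Sigma^{\ell}\cT$ admits a tower of triangles with $\ell$ triangles whose structural morphisms $\tau_i$ are all $\cT$-covers (so, in particular, every object occurring lies in $\cT$); the lemma then follows by taking $\ell = n-1$ and $c$ arbitrary, which Lemma~\ref{lem:T-resolution0} allows. The base of the induction is immediate. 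For the inductive step I would write $c \in \cT * \cY$ with $\cY = \Sigma\cT * \cdots * \Sigma^{\ell}\cT$ (using Lemma~\ref{lem:associativity}); since $\cC$ is Krull--Schmidt ($\Hom$-finite with split idempotents) and $\cT$ is functorially finite, $c$ has a $\cT$-cover $\tau_0 \colon t_0 \to c$, which I complete to a triangle $v \to t_0 \xrightarrow{\tau_0} c \to \Sigma v$. The key point is that $v \in \cT * \Sigma\cT * \cdots * \Sigma^{\ell-1}\cT$; granting it, the induction hypothesis produces a tower for $v$ with $\ell-1$ triangles and $\cT$-cover morphisms, and prepending the triangle just built --- with the non-oriented commutative triangles of Definition~\ref{def:tower} filled in by the evident composites --- yields the desired tower for $c$.

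To prove the key point I would compare $\tau_0$ with the generally non-minimal decomposition of $c$ furnished by the definition of $*$: there is a triangle $T \xrightarrow{g} c \to Y \to \Sigma T$ with $T \in \cT$ and $Y \in \cY$. Since $\cY \subseteq \Sigma\cT * \cdots * \Sigma^{n-1}\cT$, the torsion pair of Lemma~\ref{lem:Sl}(i) (with $\ell = 0$) gives $\cC(\cT,Y) = 0$, so $g$ is a right $\cT$-approximation of $c$. Its right-minimal direct summand is then a $\cT$-cover of $c$, hence isomorphic to $\tau_0$; thus $T \cong t_0 \oplus t''$ with the $t''$-component of $g$ zero, and therefore $Y \cong \Sigma v \oplus \Sigma t''$. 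Consequently $v$ is a direct summand of $\Sigma^{-1}Y$, which lies in $\Sigma^{-1}\cY = \cT * \Sigma\cT * \cdots * \Sigma^{\ell-1}\cT$. By Lemma~\ref{lem:Sl}(i) this subcategory is the torsion class of a torsion pair, and torsion classes are closed under direct summands, so $v$ lies in it, as required.

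For uniqueness of the $t_i$ up to isomorphism I would induct on the number of triangles. Given two such towers for $c$, their bottom morphisms $t_0 \to c$ and $t'_0 \to c$ are $\cT$-covers of the same object, so there is an isomorphism $t_0 \xrightarrow{\sim} t'_0$ commuting with the maps to $c$; extending this two-out-of-three isomorphism to an isomorphism of the two bottom triangles shows that the objects $v$ and $v'$ completing them are isomorphic. Deleting the bottom triangle from each tower leaves towers, again with $\cT$-cover morphisms, of the isomorphic objects $v$ and $v'$, so the induction hypothesis identifies the remaining $t_i$ up to isomorphism; together with $t_0 \cong t'_0$, this completes the proof.

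I expect the only genuine obstacle to be the key point of the second paragraph, i.e.\ keeping $v$ inside $\cT * \Sigma\cT * \cdots * \Sigma^{\ell-1}\cT$, and it rests on the one fact not recorded among the preliminaries above: the torsion (and torsion-free) classes of the Iyama--Yoshino torsion pairs of Lemma~\ref{lem:Sl}(i) are closed under direct summands. This is standard, following from the uniqueness up to isomorphism of the decomposition triangle of a torsion pair; everything else is bookkeeping with towers of triangles.
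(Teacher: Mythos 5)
Your proof is correct and is essentially a self-contained version of the argument the paper delegates to a citation: the paper's proof of this lemma simply invokes \cite[cor.\ 3.3 and its proof]{IY}, where the tower is constructed exactly as you do it, by iterating minimal right $\cT$-approximations and using the torsion pairs of Lemma \ref{lem:Sl}(i) to keep each successive cocone inside the shorter product $\cT * \cdots * \Sigma^{ \ell-1 }\cT$, and where uniqueness follows, as in your last paragraph, from the uniqueness of $\cT$-covers and of cones. The one ingredient you flag as absent from the preliminaries --- that $\cX * \cY$ is closed under direct summands when $\cC( \cX,\cY ) = 0$ --- is indeed the standard fact you describe and is recorded in Iyama--Yoshino (prop.\ 2.1(1) of \cite{IY}), so your argument has no gap.
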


\begin{proof}
The tower exists by \cite[cor.\ 3.3 and its proof]{IY}.  The $t_i$ are determined up to isomorphism by the construction given there, because $c$ or $v_{i - 0.5}$ determines the $\cT$-cover $\tau_i$ up to isomorphism, which in turn determines the cone $v_{ i+0.5 }$ or $t_{ n-1 }$ up to isomorphism.
\end{proof}

\begin{Definition}
\label{def:index}
The {\em triangulated index of $c \in \cC$ with respect to $t$} is the element 
\[
  \index_{ \cT }( c ) = \sum_{ i=0 }^{ n-1 } (-1)^i [t_i]
\] 
of the split Grothendieck group $\Ksp( \cT )$, where the $t_i$ come from the tower in Lemma \ref{lem:T-resolution1}.  
\hfill $\Box$
\end{Definition}

\begin{Lemma}
\label{lem:index_additive0}
If $t' \in \cT$ is an object, $0 \leqslant \ell \leqslant n-1$ an integer, then $\index_{ \cT }( \Sigma^{ \ell }t' ) = (-1)^{ \ell }[t']$.
\end{Lemma}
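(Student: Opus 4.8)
The plan is to exhibit, for $c = \Sigma^{\ell} t'$, one explicit tower of triangles of the shape in Lemma~\ref{lem:T-resolution1} in which $t_{\ell} = t'$ and $t_i = 0$ for every $i \neq \ell$. By the uniqueness clause of Lemma~\ref{lem:T-resolution1} the objects $t_i$ occurring in \emph{any} such tower are determined up to isomorphism, so this single computation suffices, and Definition~\ref{def:index} then yields $\index_{\cT}(\Sigma^{\ell} t') = \sum_{i=0}^{n-1}(-1)^i[t_i] = (-1)^{\ell}[t']$.

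Recall that the tower of Lemma~\ref{lem:T-resolution1} is built by iterated $\cT$-covers: one chooses a $\cT$-cover $\tau_0 \colon t_0 \to c$ with cocone $v_{0.5}$, then a $\cT$-cover $\tau_1 \colon t_1 \to v_{0.5}$ with cocone $v_{1.5}$, and so on, with $t_{n-1}$ the last cocone. The input I would use is that, by Definition~\ref{def:cluster_tilting_subcategory} (equivalently Lemma~\ref{lem:vanishing}), $\cC(\cT, \Sigma^{j}\cT) = 0$ for $1 \leqslant j \leqslant n-1$; hence for such $j$ the zero morphism $0 \to \Sigma^{j} t'$ is a $\cT$-cover, being a right $\cT$-approximation (the target has no maps from $\cT$) and trivially right minimal. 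Now run the construction on $c = \Sigma^{\ell} t'$. If $\ell = 0$, take $\tau_0 = \id_{t'}\colon t' \to t'$, which is a $\cT$-cover with cocone $0$, forcing $t_0 = t'$ and $t_i = 0$ for $i \geqslant 1$. If $\ell \geqslant 1$, take $\tau_0 = 0 \colon 0 \to \Sigma^{\ell} t'$; its cocone is $v_{0.5} = \Sigma^{\ell-1} t'$, the completing triangle being a rotation of $\Sigma^{\ell-1}t' \xrightarrow{\id} \Sigma^{\ell-1}t' \to 0 \to \Sigma^{\ell}t'$. Iterating with $\tau_i = 0$ for $i = 1, \dots, \ell-1$ produces $v_{i+0.5} = \Sigma^{\ell-1-i} t'$ — each intermediate object being $\Sigma^{j}t'$ with $1 \leqslant j \leqslant \ell \leqslant n-1$, so that $0$ is indeed its $\cT$-cover — until at stage $\ell$ the relevant object is $v_{\ell-0.5} = t' \in \cT$, whose $\cT$-cover is $\tau_{\ell} = \id_{t'}$ with cocone $v_{\ell+0.5} = 0$. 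All later $\cT$-covers are $0 \to 0$, so $t_i = 0$ for $i > \ell$; in particular $t_{n-1} = 0$ when $\ell < n-1$, while for $\ell = n-1$ the last triangle reads $t_{n-1} \to 0 \to \Sigma t' \to \Sigma t_{n-1}$, forcing $t_{n-1} = t'$, which is again consistent. This is the required tower.

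The only delicate points are bookkeeping: verifying that $0 \to \Sigma^{j}t'$ and $\id_{t'}$ qualify as $\cT$-covers, that the degenerate triangles invoked are genuine triangles, and — crucially — that the hypothesis $\ell \leqslant n-1$ leaves exactly enough room for $t'$ to occupy slot $\ell$ without overrunning slot $n-1$. I do not expect a serious obstacle here; the content of the lemma is just that applying $\Sigma$ to $t'$ slides its contribution one step along the tower, changing its sign in the alternating sum defining the index.
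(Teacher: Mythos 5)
Your proposal is correct and is exactly the paper's argument: the paper's proof simply asserts the existence of a tower as in Lemma \ref{lem:T-resolution1} with $t_{\ell} = t'$ and all other $t_i = 0$, and your construction (zero maps as $\cT$-covers of $\Sigma^j t'$ for $1 \leqslant j \leqslant n-1$, the identity as $\cT$-cover of $t'$) just fills in the details of that same tower. The bookkeeping you flag all checks out, including the boundary case $\ell = n-1$ where $t'$ lands in the final cocone slot.
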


\begin{proof}
There is a diagram as in Lemma \ref{lem:T-resolution1} with $t_{ \ell } = t'$ and every other $t_i$ equal to zero. 
\end{proof}

The following three lemmas show some instances of additivity of the triangulated index with respect to an $n$-cluster tilting object.  They will be used to prove Theorem \ref{thm:index_additive4}, of which they are special cases.

\begin{Lemma}
\label{lem:index_additive1}
If $c,c' \in \cC$ then $\index_{ \cT }( c \oplus c' ) = \index_{ \cT }( c ) + \index_{ \cT }( c' )$.
\end{Lemma}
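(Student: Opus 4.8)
The plan is to compute $\index_{\cT}(c \oplus c')$ from a tower of triangles obtained by direct-summing towers for $c$ and for $c'$, and then to read off the result in $\Ksp(\cT)$.

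First I would invoke Lemma \ref{lem:T-resolution1} to fix a tower of triangles for $c$, with objects $t_0, \dots, t_{n-1} \in \cT$ and $\cT$-covers $\tau_i$, and likewise a tower for $c'$, with objects $t'_i$ and $\cT$-covers $\tau'_i$. Taking the direct sum of the two diagrams termwise yields a diagram of the same shape; its oriented triangles are direct sums of triangles of $\cC$, hence triangles, and its non-oriented triangles are direct sums of commutative triangles, hence commutative. So this diagram is a tower of triangles for $c \oplus c'$ in the sense of Lemma \ref{lem:T-resolution1}, provided each $\tau_i \oplus \tau'_i$ is a $\cT$-cover; the objects $t_i \oplus t'_i$ obviously lie in $\cT$.

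To see that $\tau_i \oplus \tau'_i$ is a $\cT$-cover I would argue as follows. It is a right $\cT$-approximation, because a morphism into a direct sum from an object of $\cT$ splits into two components that factor through $\tau_i$ and $\tau'_i$ respectively. It is right minimal, by the following standard fact, which I would include for completeness: if $f$ and $f'$ are right minimal and $\varphi = \begin{psmallmatrix} a & b \\ c & d \end{psmallmatrix}$ is an endomorphism of the source of $f \oplus f'$ with $(f \oplus f')\varphi = f \oplus f'$, then $fa = f$, $fb = 0$, $f'c = 0$, $f'd = f'$, so $a$ and $d$ are isomorphisms by right minimality of $f$ and $f'$; setting $\beta = bd^{-1}$, $\gamma = ca^{-1}$ one has $\varphi = \begin{psmallmatrix} 1 & \beta \\ \gamma & 1 \end{psmallmatrix}\begin{psmallmatrix} a & 0 \\ 0 & d \end{psmallmatrix}$ and $f\beta\gamma = 0$, so $f(1 - \beta\gamma) = f$, whence $1 - \beta\gamma$ is an isomorphism and $\begin{psmallmatrix} 1 & \beta \\ \gamma & 1 \end{psmallmatrix}$ is invertible with inverse $\begin{psmallmatrix} u & -u\beta \\ -\gamma u & 1 + \gamma u\beta \end{psmallmatrix}$, where $u = (1 - \beta\gamma)^{-1}$; hence $\varphi$ is invertible.

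Finally, by the uniqueness statement in Lemma \ref{lem:T-resolution1}, the direct sum tower computes $\index_{\cT}(c \oplus c')$, so
\[
  \index_{\cT}(c \oplus c') = \sum_{i=0}^{n-1} (-1)^i [t_i \oplus t'_i] = \sum_{i=0}^{n-1} (-1)^i \bigl( [t_i] + [t'_i] \bigr) = \index_{\cT}(c) + \index_{\cT}(c').
\]
The one step that is not purely formal is the right minimality of $\tau_i \oplus \tau'_i$, which is the obstacle I would flag; everything else is bookkeeping about direct sums of towers.
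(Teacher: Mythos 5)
Your proposal is correct and follows essentially the same route as the paper, whose proof simply takes the direct sum of the two towers from Lemma \ref{lem:T-resolution1} ``in an obvious sense'' and reads off the index. The only non-formal point, that a direct sum of $\cT$-covers is again a $\cT$-cover, is exactly the detail you supply, and your right-minimality argument for $\tau_i \oplus \tau'_i$ is valid.
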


\begin{proof}
For each of $c$ and $c'$, there is a diagram as in Lemma \ref{lem:T-resolution1}.  Taking their direct sum in an obvious sense produces a similar diagram for $c \oplus c'$.
\end{proof}

\begin{Lemma}
\label{lem:index_additive2}
If
\begin{equation}
\label{equ:index_additive2a}
  \vcenter{
  \xymatrix @C=1pc @! {
    c' \ar[r] & t_c \ar[r]^{ \tau_c } & c \ar[r]^-{ \delta_c } & \Sigma ( c' ) \\
                                 }
          }
\end{equation}  
is a triangle in $\cC$ with $t_c \in \cT$ and $F\delta_c = 0$, then $[t_c] = \index_{ \cT }( c' ) + \index_{ \cT }( c )$.
\end{Lemma}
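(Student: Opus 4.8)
The idea is to build a tower of triangles for $c'$ out of a tower of triangles for $c$, together with the given triangle \eqref{equ:index_additive2a}, and then compare indices. First I would invoke Lemma \ref{lem:T-resolution1} to obtain a tower of triangles computing $\index_{\cT}(c)$, with objects $t_0,\dots,t_{n-1}\in\cT$ and structure maps $\tau_i$ each a $\cT$-cover, together with the intermediate objects $v_{i+0.5}$. The crucial observation is that the morphism $t_0\xrightarrow{\tau_0}c$ appearing at the right-hand end of that tower, being a $\cT$-cover, is in particular such that $\cC(t',\tau_0)$ is surjective for each $t'\in\cT$; since $F\delta_c=0$, Lemma \ref{lem:index_additive1.5} applied to \eqref{equ:index_additive2a} tells us that $\cC(t',\tau_c)$ is surjective for each $t'\in\cT$ as well, i.e.\ $\tau_c$ can also serve as the right-hand ``$\cT$-approximation'' step.

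The key step is then to splice: I would form the homotopy pullback (via the octahedral axiom) of $\tau_0 : t_0\to c$ along $\tau_c : t_c\to c$, producing a triangle of the form $v_{0.5}'\to t_0\oplus t_c\to c'\to\Sigma v_{0.5}'$ or, more precisely, an octahedron relating the cones of $\tau_0$, of $\tau_c$, and of the composite, from which one reads off a new object $t_0\oplus t_c\in\cT$ mapping to $c'$ and a triangle whose third vertex combines the cone $v_{0.5}$ of $\tau_0$ with a shift of $c'$ in a controlled way. Concatenating this with the rest of the original tower for $c$ (the part from $t_{n-1}$ down through $v_{1.5}$) yields a tower of triangles for $c'$ whose terms in $\cT$ are $t_{n-1},\dots,t_1$ in positions $n-1,\dots,1$ and $t_0\oplus t_c$ in position $0$ — but one must be careful about signs and about whether the new structure maps are genuine $\cT$-covers or merely have the required surjectivity property; since $\index_{\cT}$ only depends on the $t_i$ up to isomorphism and not on the choice of tower (as long as the maps are $\cT$-covers, but in fact the index is additive on triangles by the forthcoming general theorem, so any tower with the right exactness will do), it suffices to produce \emph{some} valid tower. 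Computing $\sum(-1)^i[\,\text{term}_i\,]$ for the new tower for $c'$ gives $\index_{\cT}(c') = [t_0\oplus t_c] - [t_1] + \cdots + (-1)^{n-1}[t_{n-1}] = [t_c] + [t_0] - [t_1] + \cdots + (-1)^{n-1}[t_{n-1}] = [t_c] + \index_{\cT}(c)$, which rearranges to the claimed identity $[t_c] = \index_{\cT}(c') + \index_{\cT}(c)$ once one notes $[t_c] = [t_c]$ and $\index_{\cT}(c') - \index_{\cT}(c)$... — here I must double-check the sign: the displayed conclusion is $[t_c] = \index_{\cT}(c') + \index_{\cT}(c)$, so I expect the splicing to insert $t_c$ with sign $+1$ and the tower for $c'$ to have its $\cT$-terms shifted by one position relative to $c$, flipping all the signs of the $t_i$, so that $\index_{\cT}(c')$ picks up $-\index_{\cT}(c) + [t_c]$ — wait, that gives $[t_c] = \index_{\cT}(c') + \index_{\cT}(c)$ exactly. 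Good.

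The main obstacle I anticipate is the bookkeeping in the octahedral/splicing step: making sure the concatenated diagram really is a tower of triangles in the sense of Definition \ref{def:tower} (all oriented triangles genuine triangles, all non-oriented triangles commutative, the wavy arrows correctly interpreted), and in particular getting the position-shift of the $t_i$ right so that the alternating sum produces the stated sign. A cleaner route, if available, would be to avoid reconstructing a full tower and instead argue directly from additivity of $\index_{\cT}$ on triangles — but that is exactly Theorem \ref{thm:index_additive4}, of which this lemma is stated to be a special case and a stepping stone, so one cannot use it here; hence the hands-on tower construction seems unavoidable, and the care lies in verifying that $\tau_c$ and the map $t_0\oplus t_c\to c'$ qualify (up to the freedom that $\index_{\cT}$ is well-defined) as legitimate steps in a tower computing $\index_{\cT}(c')$. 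Once the tower is in place, the final computation is a one-line alternating-sum manipulation in $\Ksp(\cT)$.
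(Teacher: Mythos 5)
There is a genuine gap, and it sits exactly at the point you flag but do not resolve. Your computation of $\index_{ \cT }( c' )$ rests on the claim that ``any tower with the right exactness will do'': but the index is defined (Definition \ref{def:index}) through the tower of Lemma \ref{lem:T-resolution1}, whose structure maps are required to be $\cT$-\emph{covers}, and it is precisely the minimality that makes the terms $t_i$ unique. Whether a tower whose steps are merely $\cT$-precovers (equivalently, whose connecting maps are killed by $F$, via Lemma \ref{lem:index_additive1.5}) computes the same alternating sum is exactly what this lemma -- and ultimately Theorem \ref{thm:index_additive4} -- is meant to establish, so appealing to ``additivity by the forthcoming general theorem'' is circular, as you yourself concede at the end. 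The missing idea is the one the paper uses: since $F\delta_c = 0$ and $t_c \in \cT$, Lemma \ref{lem:index_additive1.5} makes $\tau_c$ a $\cT$-precover, and a precover of $c$ differs from the $\cT$-cover $\tau_0 \colon t_0 \rightarrow c$ only by a trivial summand; hence the triangle \eqref{equ:index_additive2a} is, up to isomorphism, the direct sum of $v_{ 0.5 } \rightarrow t_0 \rightarrow c \rightarrow \Sigma v_{ 0.5 }$ and $t'_0 \xrightarrow{ \id } t'_0 \rightarrow 0 \rightarrow \Sigma t'_0$, so $c' \cong v_{ 0.5 } \oplus t'_0$ and $t_c \cong t_0 \oplus t'_0$, and the statement follows from Lemmas \ref{lem:index_additive0} and \ref{lem:index_additive1} together with $\index_{ \cT }( c ) = [ t_0 ] - \index_{ \cT }( v_{ 0.5 } )$, read off from the tower. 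No octahedron or splicing is needed.

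The splicing itself is also not right at the bookkeeping level. There is no evident morphism $t_0 \oplus t_c \rightarrow c'$ to serve as the position-$0$ step of a tower for $c'$ (the given triangle maps $c' \rightarrow t_c$, not the other way around), and the alternating sum you first write down yields $[ t_c ] = \index_{ \cT }( c' ) - \index_{ \cT }( c )$, the wrong sign; the ``shift by one position'' you then postulate is asserted rather than constructed, and the repaired tower would have $n+1$ terms in $\cT$, which no longer fits the length-$n$ format through which the index is defined -- justifying it would again require an additivity statement not yet available. If you want to keep the homotopy-pullback idea, it can be salvaged without building any tower for $c'$: with $p$ the homotopy pullback of $\tau_0$ and $\tau_c$ over $c$, both connecting maps $t_0 \rightarrow \Sigma c'$ and $t_c \rightarrow \Sigma v_{ 0.5 }$ vanish, because $\tau_0$ and $\tau_c$ factor through each other ($\tau_0$ is a cover, $\tau_c$ a precover, and both have sources in $\cT$); hence $c' \oplus t_0 \cong p \cong v_{ 0.5 } \oplus t_c$, and Lemmas \ref{lem:index_additive0} and \ref{lem:index_additive1} plus the truncated-tower formula for $\index_{ \cT }( v_{ 0.5 } )$ give the claim. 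As written, though, your proposal stops short of either this or the paper's splitting argument, and the circular appeal is doing the real work.
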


\begin{proof}
Combining Lemma \ref{lem:index_additive1.5} with the condition $t_c \in \cT$ shows that $\tau_c$ is a $\cT$-precover.  Hence up to isomorphism, \eqref{equ:index_additive2a} is the direct sum of two triangles
\[
  \xymatrix @R=-1.75pc @C=0pc @! {
    v_{ 0.5 } \ar[r] & t_0 \ar[r]^{ \tau_0 } & c \ar[r] & \Sigma( v_{ 0.5 } ), \\
    t'_0 \ar[r]^{ \id } & t'_0 \ar[r] & 0 \ar[r] & \Sigma( t'_0 )
                       }
\]
where $t_0 \xrightarrow{ \tau_0 } c$ is a $\cT$-cover and $t'_0 \in \cT$.  In particular, $c' \cong v_{ 0.5 } \oplus t'_0$ and $t_c \cong t_0 \oplus t'_0$.  Lemmas \ref{lem:index_additive1} and \ref{lem:index_additive0} show
\begin{equation}
\label{equ:index_additive2b}
  \index_{ \cT }( c' )
  = \index_{ \cT }( v_{ 0.5 } ) + \index_{ \cT }( t_c ) - \index_{ \cT }( t_0 )
  = \index_{ \cT }( v_{ 0.5 } ) + [t_c] - [t_0].
\end{equation}
The triangle $v_{ 0.5 } \xrightarrow{} t_0 \xrightarrow{ \tau_0 } c \xrightarrow{} \Sigma( v_{ 0.5 } )$ is part of a diagram as in Lemma \ref{lem:T-resolution1}, which shows $\index_{ \cT }( c ) = \sum_{ i=0 }^{ n-1 } (-1)^i [t_i]$ and $\index_{ \cT }( v_{ 0.5 } ) = - \sum_{ i=1 }^{ n-1 } (-1)^i [t_i]$, whence
\[
  \index_{ \cT }( c ) = [t_0] - \index_{ \cT }( v_{ 0.5 } ).
\]
Combining with Equation \eqref{equ:index_additive2b} proves the lemma.
\end{proof}

\begin{Lemma}
\label{lem:index_additive3}
If
\begin{equation}
\label{equ:index_additive3a}
  a \xrightarrow{ \alpha } b \xrightarrow{ \beta } c \xrightarrow{ \gamma } \Sigma a
\end{equation}
is a triangle in $\cC$ with $F\gamma = 0$, then
\begin{equation}
\label{equ:index_additive3b}
  \index_{ \cT }( b ) = \index_{ \cT }( a ) + \index_{ \cT }( c ).
\end{equation}
\end{Lemma}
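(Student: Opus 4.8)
The plan is to deduce additivity from Lemma~\ref{lem:index_additive2} by an induction along $\cT$-covers. I would prove the following slightly more general statement by induction on $\ell \in \{0,1,\dots,n-1\}$: \emph{whenever $a \xrightarrow{\alpha} b \xrightarrow{\beta} c \xrightarrow{\gamma} \Sigma a$ is a triangle in $\cC$ with $F\gamma = 0$ and $a,c \in \cT * \cdots * \Sigma^{\ell}\cT$, one has $\index_{\cT}(b) = \index_{\cT}(a) + \index_{\cT}(c)$.} Since $\cC = \cT * \cdots * \Sigma^{n-1}\cT$ by Lemma~\ref{lem:T-resolution0}, the case $\ell = n-1$ is exactly the lemma. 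For the base case $\ell = 0$ we have $a,c \in \cT$, so $\cC(c,\Sigma a) = \Ext^1_{\cC}(c,a) = 0$ because $n \geqslant 2$ and $\cT$ is $n$-cluster tilting; hence $\gamma = 0$, the triangle splits, and the claim is immediate from Lemma~\ref{lem:index_additive1}.

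For the inductive step, fix $\ell \geqslant 1$. Choose $\cT$-covers and complete them to triangles $a' \to t_a \xrightarrow{\tau_a} a \xrightarrow{\delta_a} \Sigma a'$ and $c' \to t_c \xrightarrow{\tau_c} c \xrightarrow{\delta_c} \Sigma c'$; by the construction underlying Lemma~\ref{lem:T-resolution1} (cf.\ \cite[cor.\ 3.3]{IY}) we may assume $a',c' \in \cT * \cdots * \Sigma^{\ell-1}\cT$, and Lemma~\ref{lem:index_additive1.5} gives $F\delta_a = F\delta_c = 0$, so Lemma~\ref{lem:index_additive2} yields $\index_{\cT}(a) = [t_a] - \index_{\cT}(a')$ and $\index_{\cT}(c) = [t_c] - \index_{\cT}(c')$. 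Because $F\gamma = 0$ we have $\cC(t',\gamma) = 0$ for every $t' \in \cT$, so $\gamma\tau_c = 0$ and $\tau_c$ lifts through $\beta$ to a morphism $\widetilde{\tau}_c : t_c \to b$ with $\beta\widetilde{\tau}_c = \tau_c$. A short chase --- using that $\tau_a,\tau_c$ are $\cT$-precovers and that a morphism $t' \to b$ killed by $\beta$ factors through $\alpha$ --- shows that $(\,\alpha\tau_a\ \ \widetilde{\tau}_c\,) : t_a \oplus t_c \to b$ is a $\cT$-precover. Completing it to a triangle $b' \to t_a \oplus t_c \xrightarrow{(\alpha\tau_a\ \widetilde{\tau}_c)} b \xrightarrow{\delta_b} \Sigma b'$ and invoking Lemmas~\ref{lem:index_additive1.5} and~\ref{lem:index_additive2} once more gives $\index_{\cT}(b) = [t_a] + [t_c] - \index_{\cT}(b')$.

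It remains to compute $\index_{\cT}(b')$. The morphisms $\tau_a$, $(\,\alpha\tau_a\ \ \widetilde{\tau}_c\,)$, $\tau_c$ form a morphism of triangles from the split triangle $t_a \to t_a \oplus t_c \to t_c \to \Sigma t_a$ to the given triangle $a \to b \to c \to \Sigma a$ (all three squares commute, the last because $\gamma\tau_c = 0$). By the octahedral axiom (the $3\times 3$ lemma) the cocones fit into a triangle $a' \to b' \to c' \xrightarrow{\varepsilon} \Sigma a'$. Here $\Sigma a' \in \Sigma\cT * \cdots * \Sigma^{\ell}\cT \subseteq \Sigma\cT * \cdots * \Sigma^{n-1}\cT$, so $F(\Sigma a') = 0$ by Lemma~\ref{lem:vanishing} and hence $F\varepsilon = 0$; since also $a',c' \in \cT * \cdots * \Sigma^{\ell-1}\cT$, the induction hypothesis applies to this triangle and gives $\index_{\cT}(b') = \index_{\cT}(a') + \index_{\cT}(c')$. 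Substituting, $\index_{\cT}(b) = \bigl([t_a] - \index_{\cT}(a')\bigr) + \bigl([t_c] - \index_{\cT}(c')\bigr) = \index_{\cT}(a) + \index_{\cT}(c)$, which closes the induction.

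I expect the main obstacle to be the two structural inputs used in the inductive step rather than the bookkeeping: first, that the cocone of the $\cT$-cover of an object of $\cT * \cdots * \Sigma^{\ell}\cT$ can be taken in $\cT * \cdots * \Sigma^{\ell-1}\cT$, so that the relevant $*$-length genuinely decreases and the induction terminates; and second, that the $3\times 3$ lemma hands back a triangle whose connecting morphism is automatically killed by $F$ --- this last point is where Lemma~\ref{lem:vanishing} does the work, keeping the induction hypothesis applicable with no new hypotheses to carry along.
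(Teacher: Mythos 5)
Your proposal is correct and follows essentially the same route as the paper's proof: induction on the $*$-length $\ell$ with $a,c\in\cT*\cdots*\Sigma^{\ell}\cT$, the split base case, the lift $\widetilde{\tau}_c$ of $\tau_c$ through $\beta$, the $3\times 3$ diagram producing $a'\to b'\to c'\to\Sigma a'$, and Lemma \ref{lem:index_additive2} applied three times. The only (immaterial) difference is that you establish $F\delta_b=0$ by checking directly that $(\alpha\tau_a\ \widetilde{\tau}_c)$ is a $\cT$-precover, whereas the paper deduces it from $b'\in\cT*\cdots*\Sigma^{\ell-1}\cT$ (extension-closure) together with Lemma \ref{lem:vanishing}.
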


\begin{proof}
Let $0 \leqslant \ell \leqslant n-1$ be an integer.  In addition to the assumptions in the lemma, suppose $a,c \in \cT * \cdots * \Sigma^{ \ell }\cT$.  We will prove Equation \eqref{equ:index_additive3b} by induction on $\ell$.  The lemma will follow because of Lemma \ref{lem:T-resolution0}.

If $\ell = 0$ then $a,c \in \cT$.  Since $\cC( \cT,\Sigma \cT ) = 0$ we have $c \xrightarrow{ \gamma } \Sigma a$ equal to zero, so the triangle \eqref{equ:index_additive3a} is split and $b \cong a \oplus c$.  Hence Equation \eqref{equ:index_additive3b} follows from Lemma \ref{lem:index_additive1}.

If $1 \leqslant \ell \leqslant n-1$, then by definition of $\cT * \cdots * \Sigma^{ \ell }\cT$ there are triangles
\begin{equation}
\label{equ:index_additive3z}
  a' \xrightarrow{} t_a \xrightarrow{ \tau_a } a \xrightarrow{ \delta_a } \Sigma a'
  \;\;,\;\;
  c' \xrightarrow{} t_c \xrightarrow{ \tau_c } c \xrightarrow{ \delta_c } \Sigma c'
\end{equation}
with $t_a,t_c \in \cT$ and
\begin{equation}
\label{equ:index_additive3d}
  a',c' \in \cT * \cdots * \Sigma^{ \ell-1 }\cT.
\end{equation}
We know $F\gamma = 0$, so Lemma \ref{lem:index_additive1.5} says that  there is $t_c \xrightarrow{ \tau } b$ such that $\tau_c = \beta\tau$.  This gives the following commutative diagram of triangles.
\[
  \xymatrix @-1.2pc @! {
    t_a \ar[r]^-{ \begin{psmallmatrix} \id \\ 0 \end{psmallmatrix} } \ar[d]_{ \tau_a } & t_a \oplus t_c \ar[r]^-{ ( 0 \; \id ) } \ar[d]_{ ( \alpha\tau_a \; \tau ) } & t_c \ar[r]^0 \ar[d]^{ \tau_c } & \Sigma t_a \ar[d]^{ \Sigma \tau_a } \\
    a \ar[r]_{ \alpha } & b \ar[r]_{ \beta } & c \ar[r]_{ \gamma } & \Sigma a \\
                      }
\]
Since the first triangle is split, by \cite[lem.\ 1.7, def.\ 1.9, and thm.\ 2.3]{N} the diagram can be completed to the following diagram, where each row and column is a triangle and each square is commutative, except for the one at the bottom right which is anticommutative.
\begin{equation}
\label{equ:index_additive3_diagram}
\vcenter{
  \xymatrix @-1.2pc @! {
    a' \ar[r] \ar[d] & b' \ar[r] \ar[d] & c' \ar[r]^{ \gamma' } \ar[d] & \Sigma a' \ar[d] \\
    t_a \ar[r]^-{ \begin{psmallmatrix} \id \\ 0 \end{psmallmatrix} } \ar[d]_{ \tau_a } & t_a \oplus t_c \ar[r]^-{ ( 0 \; \id ) } \ar[d]_{ ( \alpha\tau_a \; \tau ) } & t_c \ar[r]^0 \ar[d]^{ \tau_c } & \Sigma t_a \ar[d]^{ \Sigma \tau_a } \\
    a \ar[r]_{ \alpha } \ar[d] & b \ar[r]_{ \beta } \ar[d]_{ \delta_b } & c \ar[r]_{ \gamma } \ar[d] & \Sigma a \ar[d] \\
    \Sigma a' \ar[r] & \Sigma b' \ar[r] & \Sigma c' \ar[r] & \Sigma^2 a' \\
                      }
        }                      
\end{equation}
The second column is the triangle
\begin{equation}
\label{equ:index_additive3y}
  b' \xrightarrow{} t_a \oplus t_c \xrightarrow{ ( \alpha \tau_a \; \tau ) } b \xrightarrow{ \delta_b } \Sigma b'.  	
\end{equation}

Equation \eqref{equ:index_additive3d} implies
\begin{equation}
\label{equ:index_additive3d.5}
  F( \Sigma a' ) = 0
\end{equation}
by Lemma \ref{lem:vanishing}.  In particular, the first of the triangles \eqref{equ:index_additive3z} satisfies $F\delta_a = 0$, so Lemma \ref{lem:index_additive2} gives
\begin{equation}
\label{equ:index_additive3d.8}
  [t_a] = \index_{ \cT }( a' ) + \index_{ \cT }( a ).
\end{equation}
Similarly,
\begin{equation}
\label{equ:index_additive3d.9}
  [t_c] = \index_{ \cT }( c' ) + \index_{ \cT }( c ).
\end{equation}
The first row of diagram \eqref{equ:index_additive3_diagram} combined with Equation \eqref{equ:index_additive3d} and Lemma \ref{lem:Sl}(ii) gives $b' \in \cT * \cdots * \Sigma^{ \ell-1 }\cT$ whence $F( \Sigma b' ) = 0$ by Lemma \ref{lem:vanishing}.  In particular, the triangle \eqref{equ:index_additive3y} satisfies $F\delta_b = 0$, so Lemma \ref{lem:index_additive2} gives
\begin{equation}
\label{equ:index_additive3h}
  [t_a \oplus t_c] = \index_{ \cT }( b' ) + \index_{ \cT }( b ).
\end{equation}

Finally, Equation \eqref{equ:index_additive3d.5} implies $F\gamma' = 0$.  By Equation \eqref{equ:index_additive3d}, this means that induction applies to the triangle $a' \xrightarrow{} b' \xrightarrow{} c' \xrightarrow{ \gamma' } \Sigma a'$, whence
\[
  \index_{ \cT }( b' ) = \index_{ \cT }( a' ) + \index_{ \cT }( c' ).
\]
Combined with Equations \eqref{equ:index_additive3d.8}, \eqref{equ:index_additive3d.9}, and \eqref{equ:index_additive3h}, this proves Equation \eqref{equ:index_additive3b}. 
\end{proof}

\section{Additivity of the triangulated index up to the error term $\theta$}
\label{sec:theta}

This section shows that the triangulated index with respect to an $n$-cluster tilting object is additive up to an error term given by a homomorphism $\theta$.  The following lemma is due to \cite[sec.\ 3.1]{Palu} in the case where $t$ is a $2$-cluster tilting object.

\begin{Lemma}
\label{lem:theta}
There is a unique homomorphism
\[
  \theta : \K0( \mod\,\Gamma ) \rightarrow \Ksp( \cT )
\]
defined by
\[
  \theta( [Fm] ) = \index_{ \cT }( \Sigma^{ -1 }m ) + \index_{ \cT }( m )
\]
for $m \in \cT * \Sigma \cT$.
\end{Lemma}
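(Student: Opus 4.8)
The plan is to define $\theta$ first on objects of $\mod(\Gamma)$, check that this assignment is constant on isomorphism classes and additive on short exact sequences, and then invoke the universal property of $\K0(\mod\Gamma)$. By Lemma \ref{lem:IY623} every object of $\mod(\Gamma)$ is isomorphic to $Fm$ for some $m \in \cT * \Sigma\cT$, so the classes $[Fm]$ generate $\K0(\mod\Gamma)$; this gives uniqueness of $\theta$ as soon as existence is shown, and it also means that setting $\theta_0(Fm) = \index_{\cT}(\Sigma^{-1}m) + \index_{\cT}(m)$ defines $\theta$ on a set of generators, provided it is well-defined on isomorphism classes.

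The heart of the argument is this well-definedness. Suppose $Fm \cong Fm'$ with $m,m' \in \cT * \Sigma\cT$. By Lemma \ref{lem:IY623} again, $m$ and $m'$ are isomorphic in $( \cT * \Sigma\cT )/[\Sigma\cT]$; lifting a pair of mutually inverse isomorphisms to morphisms $f : m \to m'$ and $g : m' \to m$ in $\cC$, the differences $gf - \id_m$ and $fg - \id_{m'}$ factor through objects of $\Sigma\cT$, i.e.\ through objects of the form $\Sigma t'$ with $t' \in \cT$. Writing $gf - \id_m = vu$ with $u : m \to \Sigma t_a$ and $v : \Sigma t_a \to m$ exhibits $\begin{psmallmatrix} f \\ u \end{psmallmatrix}$ as a split monomorphism $m \to m' \oplus \Sigma t_a$ with retraction $( g \; {-v} )$, so $m' \oplus \Sigma t_a \cong m \oplus X$ in $\cC$ because idempotents split; symmetrically $m' \oplus Y \cong m \oplus \Sigma t_b$. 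Since $\cC$ is Krull--Schmidt (being $k$-linear, $\Hom$-finite and idempotent complete), combining $m' \oplus \Sigma t_a \cong m \oplus X$ with $m' \oplus Y \cong m \oplus \Sigma t_b$ yields $m' \oplus X \oplus Y \cong m \oplus X \oplus \Sigma t_b \cong m' \oplus \Sigma t_a \oplus \Sigma t_b$, and cancelling $m'$ gives $X \oplus Y \cong \Sigma t_a \oplus \Sigma t_b$; hence $X \cong \Sigma r$ for some $r \in \cT$, so $m \oplus \Sigma r \cong m' \oplus \Sigma t_a$, and applying the autoequivalence $\Sigma^{-1}$ also gives $\Sigma^{-1}m \oplus r \cong \Sigma^{-1}m' \oplus t_a$. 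Now Lemma \ref{lem:index_additive1} together with Lemma \ref{lem:index_additive0} turns these two isomorphisms into $\index_{\cT}(m) - [r] = \index_{\cT}(m') - [t_a]$ and $\index_{\cT}(\Sigma^{-1}m) + [r] = \index_{\cT}(\Sigma^{-1}m') + [t_a]$, and adding them makes the terms $\pm[r]$ and $\mp[t_a]$ cancel, so $\index_{\cT}(\Sigma^{-1}m) + \index_{\cT}(m) = \index_{\cT}(\Sigma^{-1}m') + \index_{\cT}(m')$. Thus $\theta_0$ is a well-defined function on isomorphism classes.

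It remains to verify additivity on short exact sequences and conclude. By Lemma \ref{lem:short_exact_sequence} an arbitrary short exact sequence in $\mod(\Gamma)$ is isomorphic to $0 \to Fk \xrightarrow{F\kappa} F\ell \xrightarrow{F\lambda} Fm \to 0$ coming from a triangle $k \xrightarrow{\kappa} \ell \xrightarrow{\lambda} m \xrightarrow{\gamma} \Sigma k$ with $k,\ell,m \in \cT * \Sigma\cT$. In the long exact sequence of the homological functor $F$, surjectivity of $F\lambda$ forces $F\gamma = 0$ and injectivity of $F\kappa$ forces $F( \Sigma^{-1}\gamma ) = 0$, so Lemma \ref{lem:index_additive3} applies both to $k \to \ell \to m \xrightarrow{\gamma} \Sigma k$ and to its desuspension $\Sigma^{-1}k \to \Sigma^{-1}\ell \to \Sigma^{-1}m \xrightarrow{\Sigma^{-1}\gamma} k$, giving $\index_{\cT}(\ell) = \index_{\cT}(k) + \index_{\cT}(m)$ and $\index_{\cT}(\Sigma^{-1}\ell) = \index_{\cT}(\Sigma^{-1}k) + \index_{\cT}(\Sigma^{-1}m)$; adding shows $\theta_0(F\ell) = \theta_0(Fk) + \theta_0(Fm)$. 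A function on isomorphism classes that is additive on short exact sequences extends uniquely to a group homomorphism $\theta : \K0(\mod\Gamma) \to \Ksp(\cT)$, which is the required map, with $\theta([Fm]) = \index_{\cT}(\Sigma^{-1}m) + \index_{\cT}(m)$ by construction and unique as noted above. The only genuinely delicate step is the well-definedness, and within it the passage from "isomorphic in $( \cT * \Sigma\cT )/[\Sigma\cT]$" to "isomorphic in $\cC$ after adding summands from $\Sigma\cT$", which is exactly where the Krull--Schmidt property of $\cC$ enters; the rest is a direct application of the lemmas already established.
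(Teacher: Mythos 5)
Your proof is correct and follows essentially the same route as the paper, which simply packages the two key steps as separate lemmas: well-definedness (the paper's Lemma \ref{lem:theta1}, which asserts via Lemma \ref{lem:IY623} that $m$ and $m'$ differ only by summands in $\Sigma\cT$ --- the fact you verify in detail with your Krull--Schmidt argument, and then exploits exactly your computation with Lemmas \ref{lem:index_additive0} and \ref{lem:index_additive1}) and additivity on short exact sequences (the paper's Lemma \ref{lem:theta2}, proved as you do via Lemma \ref{lem:short_exact_sequence} and two applications of Lemma \ref{lem:index_additive3}). No gaps.
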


\begin{proof}
Each $M \in \mod( \Gamma )$ has the form $Fm$ for some $m \in \cT * \Sigma \cT$ by Lemma \ref{lem:IY623}, so the lemma follows from Lemmas \ref{lem:theta1} and \ref{lem:theta2}.
\end{proof}

Lemmas \ref{lem:theta1} and \ref{lem:theta2} generalise \cite[lem.\ 2.1(4) and lem.\ 1.3]{Palu} to the case where $t$ is an $n$-cluster tilting object.  The proofs remain the same, but we include them for the convenience of the reader.

\begin{Lemma}
\label{lem:theta1}
If $m',m'' \in \cT * \Sigma \cT$ satisfy $Fm' \cong Fm''$, then
\[
  \index_{ \cT }( \Sigma^{ -1 }m' ) + \index_{ \cT }( m' ) =
  \index_{ \cT }( \Sigma^{ -1 }m'' ) + \index_{ \cT }( m'' ).
\]
\end{Lemma}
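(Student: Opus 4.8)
The plan is to follow Palu's argument in the present generality: first promote the ``stable'' isomorphism $Fm' \cong Fm''$ to an honest isomorphism $m'' \oplus \Sigma u \cong m' \oplus \Sigma u'$ with $u, u' \in \cT$, and then read off the claimed identity from additivity of $\index_\cT$ on direct sums.

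For the first step I would invoke Lemma \ref{lem:IY623}: since $F$ induces an equivalence $(\cT * \Sigma \cT)/[\Sigma \cT] \xrightarrow{\sim} \mod(\Gamma)$ and $m', m'' \in \cT * \Sigma \cT$, the isomorphism $Fm' \cong Fm''$ corresponds under this equivalence to an isomorphism in the quotient category. Lifting it and its inverse to $\cC$ yields morphisms $\mu : m' \to m''$ and $\nu : m'' \to m'$ such that $\nu\mu - \id_{m'}$ and $\mu\nu - \id_{m''}$ factor through objects of $\Sigma \cT$. Write $\id_{m'} - \nu\mu = \beta'\alpha'$ and $\id_{m''} - \mu\nu = \beta''\alpha''$ with $\alpha' : m' \to p$, $\beta' : p \to m'$, $\alpha'' : m'' \to q$, $\beta'' : q \to m''$ and $p, q \in \Sigma \cT$. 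Then the morphism $m' \to m'' \oplus p$ with components $\mu, \alpha'$ is a split monomorphism, a retraction being given by the components $\nu, \beta'$, so $m'' \oplus p \cong m' \oplus w$ in $\cC$ for some $w$; symmetrically $m' \oplus q \cong m'' \oplus w'$ for some $w'$.

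Combining the two isomorphisms gives $m'' \oplus p \oplus q \cong m' \oplus w \oplus q \cong m'' \oplus w' \oplus w$. Since $\cC$ is $\Hom$-finite with split idempotents it is Krull--Schmidt, so cancelling $m''$ yields $p \oplus q \cong w \oplus w'$; in particular $w$ is a direct summand of an object of $\Sigma \cT = \add(\Sigma t)$, hence $w \cong \Sigma u'$ with $u' \in \cT$, and likewise $p \cong \Sigma u$ with $u \in \cT$. Therefore $m'' \oplus \Sigma u \cong m' \oplus \Sigma u'$.

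Finally I would apply $\index_\cT$, which is additive on direct sums by Lemma \ref{lem:index_additive1}, and use Lemma \ref{lem:index_additive0} to evaluate it on shifts of objects of $\cT$. From $m'' \oplus \Sigma u \cong m' \oplus \Sigma u'$ one gets $\index_\cT(m'') - [u] = \index_\cT(m') - [u']$, while applying $\Sigma^{-1}$ to the same isomorphism gives $\Sigma^{-1}m'' \oplus u \cong \Sigma^{-1}m' \oplus u'$ and hence $\index_\cT(\Sigma^{-1}m'') + [u] = \index_\cT(\Sigma^{-1}m') + [u']$; adding these two identities cancels $[u]$ and $[u']$ and proves the lemma. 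The only genuinely delicate point is the lifting-and-stabilisation step producing $m'' \oplus \Sigma u \cong m' \oplus \Sigma u'$ out of $Fm' \cong Fm''$; the rest is routine bookkeeping in $\Ksp(\cT)$.
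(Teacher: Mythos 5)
Your proposal is correct and follows essentially the same route as the paper: the paper likewise deduces from Lemma \ref{lem:IY623} that $m' \oplus \Sigma t' \cong m'' \oplus \Sigma t''$ for some $t', t'' \in \cT$ and then concludes via Lemmas \ref{lem:index_additive0} and \ref{lem:index_additive1}. The only difference is that you spell out the stabilisation step (lifting the isomorphism and its inverse through the quotient by $[\Sigma\cT]$ and using Krull--Schmidt), which the paper states without detail; your argument for it is sound.
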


\begin{proof}
Lemma \ref{lem:IY623} implies that $m'$ and $m''$ differ only by summands in $\Sigma \cT$, so there are $t',t'' \in \cT$ such that
\begin{equation}
\label{equ:theta1}
  m' \oplus \Sigma t' \cong m'' \oplus \Sigma t''.  
\end{equation}
We have
\begin{align*}
  & \index_{ \cT }( \Sigma^{ -1 }m' ) + \index_{ \cT }( m' ) \\
  & \;\; = \index_{ \cT }( \Sigma^{ -1 }m' ) + [t'] + \index_{ \cT }( m' ) - [t'] \\
  & \;\; \stackrel{ \rm (a) }{ = } \index_{ \cT }( \Sigma^{ -1 }m' ) + \index_{ \cT }( t' ) + \index_{ \cT }( m' ) + \index_{ \cT }( \Sigma t' ) \\
  & \;\; \stackrel{ \rm (b) }{ = } \index_{ \cT }\big( \Sigma^{ -1 }( m' \oplus \Sigma t' ) \big)
  + \index_{ \cT }( m' \oplus \Sigma t' ),
\end{align*}
where (a) and (b) are by Lemmas \ref{lem:index_additive0} and \ref{lem:index_additive1}.  There is a similar formula with $m''$, $t''$ instead of $m'$, $t'$.  By Equation \eqref{equ:theta1} the two formulae have the same right hand sides, so the left hand sides are equal, completing the proof.
\end{proof}

\begin{Lemma}
\label{lem:theta2}
If
\begin{equation}
\label{equ:theta2_1}
  0 \rightarrow Fk \rightarrow F\ell \rightarrow Fm \rightarrow 0
\end{equation}
is a short exact sequence in $\mod( \Gamma )$ with $k,\ell,m \in \cT * \Sigma \cT$, then
\[
  \index_{ \cT }( \Sigma^{ -1 }\ell ) + \index_{ \cT }( \ell )
  = \index_{ \cT }( \Sigma^{ -1 }k ) + \index_{ \cT }( k )
    + \index_{ \cT }( \Sigma^{ -1 }m ) + \index_{ \cT }( m ).
\]
\end{Lemma}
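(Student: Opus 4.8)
The plan is to lift the short exact sequence \eqref{equ:theta2_1} to a triangle in $\cC$ and then compute the triangulated index of each term using the additivity results already established, in particular Lemma \ref{lem:index_additive3} (additivity of $\index_{ \cT }$ on triangles whose connecting morphism is killed by $F$) together with Lemma \ref{lem:index_additive1} (additivity on direct sums) and Lemma \ref{lem:index_additive0} (the value on shifts of objects of $\cT$).

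First I would apply Lemma \ref{lem:short_exact_sequence} to \eqref{equ:theta2_1}. This gives a triangle
\[
  k' \xrightarrow{ \kappa' } \ell' \xrightarrow{ \lambda' } m' \xrightarrow{ } \Sigma k'
\]
in $\cC$ with $k',\ell',m' \in \cT * \Sigma \cT$ and with $0 \to Fk' \to F\ell' \to Fm' \to 0$ isomorphic to \eqref{equ:theta2_1}; in particular the connecting morphism $m' \to \Sigma k'$ is killed by $F$. By Lemma \ref{lem:theta1} the quantity $\index_{ \cT }( \Sigma^{-1}n ) + \index_{ \cT }( n )$ depends only on $Fn$ for $n \in \cT * \Sigma \cT$, so it suffices to prove the identity with $k,\ell,m$ replaced by $k',\ell',m'$; hence I may assume \eqref{equ:theta2_1} itself arises from such a triangle, which I now call $k \xrightarrow{ \kappa } \ell \xrightarrow{ \lambda } m \to \Sigma k$ with $F$ of the connecting morphism zero.

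Next I would exploit the triangle in two ways. Applying $\Sigma^{-1}$ rotates it to $\Sigma^{-1}m \to k \to \ell \xrightarrow{ \lambda } m$, whose connecting morphism is again killed by $F$ (it is the original connecting morphism up to sign), so Lemma \ref{lem:index_additive3} gives
\[
  \index_{ \cT }( k ) = \index_{ \cT }( \Sigma^{-1}m ) + \index_{ \cT }( \ell ).
\]
Applying Lemma \ref{lem:index_additive3} directly to $k \to \ell \to m \to \Sigma k$ requires the connecting map $m \to \Sigma k$ to be $F$-annihilated, which it is, giving
\[
  \index_{ \cT }( \ell ) = \index_{ \cT }( k ) + \index_{ \cT }( m ).
\]
Now I would also rotate to get $\Sigma^{-1}\ell \to \Sigma^{-1}m \to k \to \ell$ and $\Sigma^{-1}k \to \Sigma^{-1}\ell \to \Sigma^{-1}m \to k$; each of these has $F$ of its connecting morphism equal to zero because $F\Sigma^{j}(-) = 0$ on $\Sigma\cT * \cdots$ for $j \geqslant 1$ (Lemma \ref{lem:vanishing}) forces the relevant morphisms into the kernel detected by Lemma \ref{lem:index_additive1.5}, or more simply because rotation only changes the connecting morphism by a sign and a shift. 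Applying Lemma \ref{lem:index_additive3} to the triangle $\Sigma^{-1}k \to \Sigma^{-1}\ell \to \Sigma^{-1}m \to k$ yields
\[
  \index_{ \cT }( \Sigma^{-1}\ell ) = \index_{ \cT }( \Sigma^{-1}k ) + \index_{ \cT }( \Sigma^{-1}m ).
\]
Adding this to $\index_{ \cT }( \ell ) = \index_{ \cT }( k ) + \index_{ \cT }( m )$ gives exactly the claimed equality.

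The main obstacle is verifying that the shifted/rotated triangles genuinely satisfy the hypothesis of Lemma \ref{lem:index_additive3}, namely that $F$ annihilates their connecting morphisms. For the triangle $k \to \ell \to m \to \Sigma k$ this is built into Lemma \ref{lem:short_exact_sequence}; for its rotation $\Sigma^{-1}\ell \to \Sigma^{-1}m \to k \xrightarrow{-\Sigma^{-1}\kappa} \Sigma^{-1}(\Sigma\ell)$ — wait, more carefully: the rotation $\Sigma^{-1}m \to k \to \ell \to m$ has connecting morphism $\ell \to m$, i.e.\ $\lambda$, which need not be killed by $F$, so Lemma \ref{lem:index_additive3} does \emph{not} apply there directly. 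The correct rotations to use are the two whose connecting morphism is (a shift of) the original $F$-annihilated connecting morphism $m \to \Sigma k$: these are $k \to \ell \to m \to \Sigma k$ itself and $\Sigma^{-1}k \to \Sigma^{-1}\ell \to \Sigma^{-1}m \to k$. So the honest strategy is: apply Lemma \ref{lem:index_additive3} once to the triangle (giving $\index_{\cT}(\ell) = \index_{\cT}(k) + \index_{\cT}(m)$) and once to its $\Sigma^{-1}$-shift (giving $\index_{\cT}(\Sigma^{-1}\ell) = \index_{\cT}(\Sigma^{-1}k) + \index_{\cT}(\Sigma^{-1}m)$), and add. The bookkeeping subtlety is confirming $F$ of the connecting morphism of the $\Sigma^{-1}$-shifted triangle is zero; this follows because that connecting morphism is $\Sigma^{-1}$ of the original connecting morphism composed into the triangle, and $F\Sigma^{-1}$ of an $F$-annihilated map landing in $\cT * \Sigma\cT$ can be checked to vanish via Lemma \ref{lem:index_additive1.5} applied to the shifted triangle, or one simply notes $\Sigma^{-1}k, \Sigma^{-1}\ell, \Sigma^{-1}m$ remain in a suitable $\Sigma^{j}\cT$-filtered subcategory on which the hypotheses of Lemma \ref{lem:index_additive3} are met by the same argument as in its proof. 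Once that point is pinned down, the rest is a one-line addition.
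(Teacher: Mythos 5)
Your overall route is the same as the paper's: lift the short exact sequence to a triangle via Lemma \ref{lem:short_exact_sequence}, use Lemma \ref{lem:theta1} to reduce to that triangle, apply Lemma \ref{lem:index_additive3} once to the triangle $k' \to \ell' \to m' \xrightarrow{\mu'} \Sigma k'$ and once to its desuspension, and add the two equalities. After your detour through the wrong rotations you do land on exactly this structure, so the architecture is fine.

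The one place you go astray is the point you yourself flag as the ``bookkeeping subtlety'': why $F$ kills the connecting morphism $-\Sigma^{-1}\mu'$ of the desuspended triangle. Your primary justification --- that it is $\Sigma^{-1}$ of an $F$-annihilated map and such maps stay $F$-annihilated --- is false as a general principle: for instance $F(\id_{\Sigma t}) = 0$ because $F(\Sigma t)=0$, yet $F(\Sigma^{-1}\id_{\Sigma t}) = \id_{Ft} \neq 0$. Your fallback via Lemma \ref{lem:index_additive1.5} is the right tool but is left unverified: applied to the shifted triangle it asks for surjectivity of $\cC( t',\Sigma^{-1}\lambda' )$ for all $t' \in \cT$, which you do not check, and your remark about the objects lying in a $\Sigma^{j}\cT$-filtered subcategory is beside the point, since Lemma \ref{lem:index_additive3} has no such hypothesis --- only the vanishing of $F$ on the connecting morphism matters. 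The missing ingredient is the injectivity of $Fk' \xrightarrow{F\kappa'} F\ell'$, which you never use: applying the homological functor $F$ to the rotated triangle $\Sigma^{-1}m' \xrightarrow{-\Sigma^{-1}\mu'} k' \xrightarrow{\kappa'} \ell'$ gives an exact sequence in which the image of $F( -\Sigma^{-1}\mu' )$ is the kernel of $F\kappa'$, hence zero; equivalently, this exactness is what yields the surjectivity of $\cC( t',\Sigma^{-1}\lambda' )$ needed for Lemma \ref{lem:index_additive1.5}. This is precisely how the paper argues ($F\lambda'$ surjective gives $F\mu'=0$, $F\kappa'$ injective gives $F( \Sigma^{-1}\mu' )=0$), and with that one-line repair your proof is complete.
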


\begin{proof}
By Lemma \ref{lem:short_exact_sequence} there is a triangle
\begin{equation}
\label{equ:theta2_2}
  k' \xrightarrow{ \kappa' } \ell' \xrightarrow{ \lambda' } m' \xrightarrow{ \mu' } \Sigma k'
\end{equation}
in $\cC$ with $k',\ell',m' \in \cT * \Sigma \cT$ such that the short exact sequence \eqref{equ:theta2_1} can be identified with
\[
  0 \rightarrow Fk' \xrightarrow{ F\kappa' } F\ell' \xrightarrow{ F\lambda' } Fm' \rightarrow 0.
\]
Lemma \ref{lem:theta1} implies
\begin{equation}
\label{equ:theta2_3}
  \index_{ \cT }( \Sigma^{ -1 }k ) + \index_{ \cT }( k )
  = \index_{ \cT }( \Sigma^{ -1 }k' ) + \index_{ \cT }( k' ),
\end{equation}
and similar equations for $\ell$ and $m$.  Moreover, $F\lambda'$ is surjective, $F\kappa'$ injective, so $F\mu' = F( \Sigma^{ -1 }\mu' ) = 0$.  Hence Lemma \ref{lem:index_additive3} can be applied to the triangle \eqref{equ:theta2_2} and its desuspension
\[
  \Sigma^{ -1 }k' \xrightarrow{ -\Sigma^{ -1 }\kappa' } \Sigma^{ -1 }\ell' \xrightarrow{ -\Sigma^{ -1 }\lambda' } \Sigma^{ -1 }m' \xrightarrow{ -\Sigma^{ -1 }\mu' } k',
\]
giving
\begin{align*}
  \index_{ \cT }( \ell' ) & = \index_{ \cT }( k' ) + \index_{ \cT }( m' ), \\
  \index_{ \cT }( \Sigma^{ -1 }\ell' ) & = \index_{ \cT }( \Sigma^{ -1 }k' ) + \index_{ \cT }( \Sigma^{ -1 }m' ).
\end{align*}
Adding these and combining with Equation \eqref{equ:theta2_3} and similar equations for $\ell$ and $m$ proves the lemma.
\end{proof}

The following theorem generalises a result from \cite{Palu} to the case where $t$ is an $n$-cluster tilting object, see \cite[prop.\ 2.2 and sec.\ 3.1]{Palu}.  The first half of the proof (``special case'') is the same as the proof of \cite[prop.\ 2.2]{Palu}.

\begin{Theorem}
\label{thm:index_additive4}
If
\begin{equation}
\label{equ:index_additive4a}
  a \xrightarrow{ \alpha } b \xrightarrow{ \beta } c \xrightarrow{ \gamma } \Sigma a
\end{equation}
is a triangle in $\cC$, then
\[
  \index_{ \cT }( b )
  = \index_{ \cT }( a ) + \index_{ \cT }( c )
  - \theta \big( [ \Image F\gamma ] \big).
\]
\end{Theorem}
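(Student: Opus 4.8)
The plan is to reduce to the special case $c \in \cT$, which is where one runs the argument of \cite[prop.\ 2.2]{Palu}, and then to prove that case by forming a handful of mapping cones and invoking Lemma \ref{lem:index_additive3} several times.

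For the reduction, choose a $\cT$-precover $\tau \colon t_0 \to c$ with $t_0 \in \cT$ and complete it to a triangle $c' \to t_0 \xrightarrow{ \tau } c \xrightarrow{ \varepsilon } \Sigma c'$. Since $\cC( t',\tau )$ is surjective for all $t' \in \cT$, Lemma \ref{lem:index_additive1.5} gives $F\varepsilon = 0$, so Lemma \ref{lem:index_additive2} gives $[ t_0 ] = \index_{ \cT }( c' ) + \index_{ \cT }( c )$. Applying the octahedral axiom to $b \xrightarrow{ \beta } c \xrightarrow{ \varepsilon } \Sigma c'$ produces an object $e$ lying in triangles $a \to e \to t_0 \xrightarrow{ \gamma\tau } \Sigma a$ and $c' \to e \to b \xrightarrow{ \varepsilon\beta } \Sigma c'$. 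The second has $F( \varepsilon\beta ) = 0$, so Lemma \ref{lem:index_additive3} gives $\index_{ \cT }( e ) = \index_{ \cT }( c' ) + \index_{ \cT }( b )$; the first has $t_0 \in \cT$ and $\Image F( \gamma\tau ) = \Image F\gamma$ (because $F\tau$ is surjective), so the special case gives $\index_{ \cT }( e ) = \index_{ \cT }( a ) + [ t_0 ] - \theta\big( [ \Image F\gamma ] \big)$. Eliminating $e$, $t_0$, and $c'$ from these three identities yields the theorem.

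For the special case $c \in \cT$, Lemma \ref{lem:image_induced_by_F} factors $\gamma$ as $c \xrightarrow{ \sigma } m \xrightarrow{ \iota } \Sigma a$ with $m \in \cT * \Sigma \cT$, $F\sigma$ surjective, $F\iota$ injective, and $Fm \cong \Image F\gamma$; hence $\theta\big( [ \Image F\gamma ] \big) = \index_{ \cT }( \Sigma^{ -1 }m ) + \index_{ \cT }( m )$ by Lemma \ref{lem:theta}. Put $p = \operatorname{cone}( \Sigma^{ -1 }\iota )$ and $q = \operatorname{cone}( \sigma )$, giving triangles $\Sigma^{ -1 }m \xrightarrow{ \Sigma^{ -1 }\iota } a \to p \to m$ and $c \xrightarrow{ \sigma } m \xrightarrow{ \rho } q \to \Sigma c$. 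In the first, injectivity of $F\iota$ forces the connecting morphism $p \to m$ to vanish under $F$ (exactness of $Fp \to Fm \xrightarrow{ F\iota } F( \Sigma a )$), so Lemma \ref{lem:index_additive3} gives $\index_{ \cT }( p ) = \index_{ \cT }( a ) - \index_{ \cT }( \Sigma^{ -1 }m )$. In the second, $\Sigma c \in \Sigma \cT$ makes the connecting morphism $q \to \Sigma c$ vanish under $F$, so Lemma \ref{lem:index_additive3} gives $\index_{ \cT }( m ) = \index_{ \cT }( c ) + \index_{ \cT }( q )$; moreover $F\rho = 0$ since $F\sigma$ is surjective, and together with $\cC( t,\Sigma c ) = \Ext^1_{ \cC }( t,c ) = 0$ (valid because $n \geqslant 2$ and $t,c \in \cT$) this forces $Fq = 0$. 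Finally the octahedral axiom applied to $c \xrightarrow{ \sigma } m \xrightarrow{ \iota } \Sigma a$ yields a triangle $b \to p \to q \to \Sigma b$ (using $\operatorname{cone}( \gamma ) \cong \Sigma b$, $\operatorname{cone}( \iota ) \cong \Sigma p$, $\operatorname{cone}( \sigma ) \cong q$), whose connecting morphism $q \to \Sigma b$ vanishes under $F$ because $Fq = 0$, so Lemma \ref{lem:index_additive3} gives $\index_{ \cT }( p ) = \index_{ \cT }( b ) + \index_{ \cT }( q )$. Combining the four identities gives $\index_{ \cT }( b ) = \index_{ \cT }( p ) - \index_{ \cT }( q ) = \index_{ \cT }( a ) + \index_{ \cT }( c ) - \index_{ \cT }( \Sigma^{ -1 }m ) - \index_{ \cT }( m ) = \index_{ \cT }( a ) + \index_{ \cT }( c ) - \theta\big( [ \Image F\gamma ] \big)$.

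The real content is diagrammatic bookkeeping: arranging the two octahedra so that $e$, respectively $b$, genuinely occurs as the object displayed, and verifying at each of the four applications of Lemma \ref{lem:index_additive3} that the relevant connecting morphism dies under $F$. The two checks that are not purely formal are $F( p \to m ) = 0$, which uses injectivity of $F\iota$, and $Fq = 0$, which is where the $n$-cluster tilting hypothesis (via $\Ext^1_{ \cC }( \cT,\cT ) = 0$) and the reduction to $c \in \cT$ genuinely enter; the other two are automatic once the diagrams are set up.
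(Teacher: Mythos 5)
Your proof is correct, and while its engine is the same as the paper's (factor $\gamma$ through $m \in \cT * \Sigma \cT$ via Lemma \ref{lem:image_induced_by_F}, convert $\theta\big( [\Image F\gamma] \big)$ into $\index_{ \cT }( \Sigma^{-1}m ) + \index_{ \cT }( m )$ via Lemma \ref{lem:theta}, and harvest index identities from octahedra through Lemma \ref{lem:index_additive3}), the way you organise the two halves differs from the paper in a genuine way. In the special case your objects $p$ and $q$ are exactly the paper's $b'$ and $\Sigma e$, so that computation is essentially the paper's; the difference is that the paper only needs $F\pi = F\rho = 0$ (coming from surjectivity of $F\sigma$ and injectivity of $F\iota$), so its special case works for all $c \in \cT * \Sigma \cT$, whereas your version additionally uses $Fq = 0$, which is where $c \in \cT$ (i.e.\ $F( \Sigma c ) = 0$, from $\Ext^1_{ \cC }( \cT,\cT ) = 0$) genuinely enters. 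The reduction is where you really diverge: the paper splits off the case $n = 2$ and, for $n \geqslant 3$, truncates $c$ using the torsion pair $( \cT * \Sigma \cT, \Sigma^2 \cT * \cdots * \Sigma^{ n-1 }\cT )$ of Lemma \ref{lem:Sl}(i), applies the special case to the truncated triangle, and checks $\Image F\gamma' = \Image F\gamma$; you instead take a $\cT$-precover $t_0 \rightarrow c$, form the homotopy-pullback octahedron over $c$, control the precover triangle by Lemma \ref{lem:index_additive2} and the other leg by Lemma \ref{lem:index_additive3} (both connecting maps die under $F$ because $F\varepsilon = 0$), and feed the triangle $a \rightarrow e \rightarrow t_0 \rightarrow \Sigma a$ into the special case, using that $F\tau$ surjective gives $\Image F( \gamma\tau ) = \Image F\gamma$ and $\index_{ \cT }( t_0 ) = [t_0]$. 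Your route is uniform in $n$ (no case split) and avoids invoking the torsion-pair decomposition of Lemma \ref{lem:Sl} in this proof, at the cost of reducing all the way down to $c \in \cT$ and of the extra vanishing check $Fq = 0$; the paper's route proves the stronger intermediate statement for $c \in \cT * \Sigma \cT$ directly, with lighter vanishing checks inside the octahedron. All the individual steps you use (existence of $\cT$-precovers from functorial finiteness, $F\varepsilon = 0$ via Lemma \ref{lem:index_additive1.5}, the identifications of cones in the two octahedra up to irrelevant signs) are justified, so the argument stands as a valid alternative proof.
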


\begin{proof}
Special case: $c \in \cT * \Sigma \cT$.  Lemma \ref{lem:image_induced_by_F} provides a commutative diagram
\begingroup
\[
  \vcenter{
  \xymatrix 
  {
    c \ar[rr]^-{ \gamma } \ar[rd]_{ \sigma } && \Sigma a \\
    & m \ar[ur]_{ \iota } &
                        }
          }
\]
\endgroup
where $m \in \cT * \Sigma \cT$, and where
\begin{equation}
\label{equ:index_additive4c}
  \mbox{$F\sigma$ is surjective, $F\iota$ injective.}
\end{equation}
Using this and the octahedral axiom gives a diagram where each row and column is a triangle.
\[
  \xymatrix @-0.4pc @! {
    0 \ar[r] \ar[d] & e \ar@{=}[r] \ar[d] & e \ar[r] \ar[d] & 0 \ar[d] \\
    a \ar[r] \ar@{=}[d] & b \ar[r] \ar[d] & c \ar[r]^{ \gamma } \ar[d]^{ \sigma } & \Sigma a \ar@{=}[d] \\
    a \ar[r] \ar[d] & b' \ar[r]_{ \rho } \ar[d]_{ \pi\rho } & m \ar[r]_{ \iota } \ar[d]^{ \pi } & \Sigma a \ar[d] \\
    0 \ar[r] & \Sigma e \ar@{=}[r] & \Sigma e \ar[r] & 0 \\
                      }
\]
Equation \eqref{equ:index_additive4c} implies $F\pi = F\rho = 0$ whence also $F( \pi\rho ) = 0$, so Lemma \ref{lem:index_additive3} gives
\begin{align*}
  \index_{ \cT }( a ) & = \index_{ \cT }( \Sigma^{ -1 }m ) + \index_{ \cT }( b' ), \\
  \index_{ \cT }( b ) & = \index_{ \cT }( e ) + \index_{ \cT }( b' ), \\
  \index_{ \cT }( c ) & = \index_{ \cT }( e ) + \index_{ \cT }( m ).
\end{align*}
Combining these with Lemma \ref{lem:theta} and observing that $Fm \cong \Image F\gamma$ proves the theorem. 

General case: $c \in \cC$.  If $n = 2$, then $\cC = \cT * \Sigma \cT$ by Lemma \ref{lem:T-resolution0}, so we are done by the special case above.  Assume $n \geqslant 3$.  By Lemma \ref{lem:Sl}(i) there is a triangle $c' \xrightarrow{ \chi' } c \rightarrow f \rightarrow \Sigma c'$ with $c' \in \cT * \Sigma \cT$ and
\begin{equation}
\label{equ:index_additive4c.5}	
  f \in \Sigma^2 \cT * \cdots * \Sigma^{ n-1 }\cT.
\end{equation}  
Using this, the triangle \eqref{equ:index_additive4a}, and the octahedral axiom gives a diagram where each row and column is a triangle.
\[
  \xymatrix @-1.5pc @! {
    0 \ar[r] \ar[d] & \Sigma^{ -1 }f \ar@{=}[r] \ar[d] & \Sigma^{ -1 }f \ar[r] \ar[d] & 0 \ar[d] \\
    a \ar[r] \ar@{=}[d] & b' \ar[r] \ar[d] & c' \ar[r]^{ \gamma' } \ar[d]^{ \chi' } & \Sigma a \ar@{=}[d] \\
    a \ar[r] \ar[d] & b \ar[r] \ar[d]_{ \varphi } & c \ar[r]_{ \gamma } \ar[d]^{ \psi } & \Sigma a \ar[d] \\
    0 \ar[r] & f \ar@{=}[r] & f \ar[r] & 0 \\
                      }
\]

The special case from the start of the proof applied to the second row gives
\begin{equation}
\label{equ:index_additive4d}	
  \index_{ \cT }( b' )
  = \index_{ \cT }( a ) + \index_{ \cT }( c' )
  - \theta( [ \Image F\gamma' ] ).
\end{equation}
Equation \eqref{equ:index_additive4c.5} and Lemma \ref{lem:vanishing} imply $Ff = 0$ whence $F\varphi = F\psi = 0$, so Lemma \ref{lem:index_additive3} gives
\begin{align}
\label{equ:index_additive4e}	
  \index_{ \cT }( b' ) & = \index_{ \cT }( \Sigma^{ -1 }f ) + \index_{ \cT }( b ), \\
\label{equ:index_additive4f}	  
  \index_{ \cT }( c' ) & = \index_{ \cT }( \Sigma^{ -1 }f ) + \index_{ \cT }( c ).
\end{align}
Finally, Equation \eqref{equ:index_additive4c.5} and Lemma \ref{lem:vanishing} also imply $F( \Sigma^{ -1 }f ) = 0$, so applying $F$ to the last two columns of the diagram gives
\[
  \xymatrix @-1.1pc @! {
    0 \ar[r] \ar[d] & 0 \ar[d] \\
    Fc' \ar[r]^-{ F\gamma' } \ar[d]_{ F\chi' }^{ \mbox{\rotatebox{90}{$\sim$}} } & F( \Sigma a ) \ar@{=}[d] \\
    Fc \ar[r]_-{ F\gamma } \ar[d] & F( \Sigma a ) \ar[d] \\
    0 \ar[r] & 0 \lefteqn{,} \\
                      }
\]
whence $\Image F\gamma' = \Image F\gamma$.  Combining with Equations \eqref{equ:index_additive4d}, \eqref{equ:index_additive4e}, and \eqref{equ:index_additive4f} proves the theorem.  
\end{proof}

\section{The $( d+2 )$-angulated index}
\label{sec:higher}

This section Proves Theorem \ref{thm:index_additive5_intro} from the introduction (= Theorem \ref{thm:index_additive5}).

\begin{Setup}
\label{set:blanket2}
In Sections \ref{sec:higher} and \ref{sec:friezes} the following is fixed:  Setup \ref{set:blanket1} still applies; in particular, we still have a triangulated category $\cC$, an integer $n \geqslant 2$, and an $n$-cluster tilting object $t \in \cC$, and we write $\cT = \add( t )$.  Moreover, $n$ is assumed to be even, we write $n = 2d$, and:
\begin{enumerate}
\setlength\itemsep{4pt}
 
  \item  There is a $d$-cluster tilting subcategory $\cS \subseteq \cC$, which satisfies $\Sigma^d \cS = \cS$.  
  
  \item  $t \in \cS$.
\hfill $\Box$

\end{enumerate}
\end{Setup}

\begin{Remark}
\label{rmk:blanket2}
Setup \ref{set:blanket2} implies:
\begin{enumerate}
\setlength\itemsep{4pt}
 
  \item  $\cS$ has a ``standard'' structure as $( d+2 )$-angulated category by \cite[thm.\ 1]{GKO}, with $d$-suspension functor $\Sigma^d$, the restriction to $\cS$ of the $d$'th power of the suspension functor $\Sigma$ of $\cC$.
  
  \item  $t \in \cS$ is an Oppermann--Thomas cluster tilting object by \cite[thm.\ 5.26]{OT}.
\hfill $\Box$
  
\end{enumerate}
\end{Remark}

We recall the definition of Oppermann--Thomas cluster tilting objects from \cite[def.\ 5.3]{OT}.

\begin{Definition}
\label{def:OT_cluster_tilting}
An object $t \in \cS$ is an {\em Oppermann--Thomas cluster tilting object} if it satisfies the following:
\begin{enumerate}
\setlength\itemsep{4pt}

  \item  $\cS( t,\Sigma^d t ) = 0$.

  \item  For each $s \in \cS$ there is a $( d+2 )$-angle $t_d \rightarrow \cdots \rightarrow t_0 \rightarrow s \rightarrow \Sigma^d t_d$ in $\cS$ with $t_i \in \add( t )$.
\hfill $\Box$

\end{enumerate}
\end{Definition}

\begin{Remark}
\label{rmk:index2}
Recall $\indexhigher_{ \cT }( s )$ from Definition \ref{def:index2} in the introduction.  Let us show that it is well-defined and depends only on the isomorphism class of $s \in \cS$:  The $( d+2 )$-angle \eqref{equ:index2_higher_angle} in Definition \ref{def:index2} exists by Definition \ref{def:OT_cluster_tilting}(ii).  That definition does not include the condition that the $\tau_i$ are in the radical of $\cS$, but this can be achieved by dropping trivial summands from the $\tau_i$.  Moreover, \eqref{equ:index2_higher_angle} is unique up to isomorphism: First note that $t_0 \rightarrow s$ is unique up to isomorphism because it is a $\cT$-cover by \cite[lem.\ 3.12(a)]{F}, then apply \cite[lem.\ 5.18(2)]{OT}, which is valid under the assumptions we have made on $\cS$.
\hfill $\Box$
\end{Remark}

\begin{Proposition}
\label{pro:indices_agree}
If $s \in \cS$ then $\indexhigher_{ \cT }( s ) = \index_{ \cT }( s )$.
\end{Proposition}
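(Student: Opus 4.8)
The plan is to show that the two indices are computed from the same data. Recall that $\indexhigher_{\cT}(s)$ is defined from a $(d+2)$-angle
\[
  t_d \xrightarrow{\tau_d} \cdots \xrightarrow{\tau_1} t_0 \rightarrow s \rightarrow \Sigma^d t_d
\]
in $\cS$ with $t_i \in \cT$ and $\tau_i$ in the radical, while $\index_{\cT}(s)$ is computed from a tower of triangles in $\cC$ as in Lemma \ref{lem:T-resolution1}, using $n = 2d$ objects $t'_0, \dots, t'_{n-1} \in \cT$. The key observation is that, under the ``standard'' construction of the $(d+2)$-angulated structure on $\cS$ (Remark \ref{rmk:blanket2}(i)), the $(d+2)$-angles in $\cS$ are precisely obtained by splicing together $d+1$ consecutive triangles of $\cC$: a $(d+2)$-angle in $\cS$ unfolds as a tower of $d+1$ triangles in $\cC$ whose outer terms are the $t_i$ and whose connecting ``wavy'' objects are auxiliary objects of $\cC$.

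First I would recall, from \cite{GKO} (and as used in \cite{OT}), the explicit description of the standard $(d+2)$-angulated structure: a $(d+2)$-angle $t_d \to \cdots \to t_0 \to s \to \Sigma^d t_d$ arises from a tower of triangles
\[
  v_{i+1} \rightarrow t_i \rightarrow v_i \rightarrow \Sigma v_{i+1}
\]
in $\cC$ for $i = 0, \dots, d-1$, with $v_0 = s$ and $v_d = \Sigma^{d-1} t_d$ (indices up to the usual shifts). Second, I would observe that since each $\tau_i$ is in the radical of $\cS$, the induced triangles in $\cC$ have their structure maps in the radical as well, so after possibly absorbing split summands this tower can be taken to be exactly the tower of Lemma \ref{lem:T-resolution1} truncated appropriately — i.e. the maps $t_i \to v_i$ are $\cT$-covers. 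Third, I would apply Lemma \ref{lem:index_additive0} and the additivity results (Lemma \ref{lem:index_additive3}, or directly the recursive structure of $\index_{\cT}$ established in the proof of Lemma \ref{lem:index_additive2}) to each of the $d$ triangles in the tower, which expresses $\index_{\cT}(s)$ in terms of $\index_{\cT}(\Sigma^j t_i)$. Using $\index_{\cT}(\Sigma^{\ell} t') = (-1)^{\ell}[t']$, the alternating signs from the $d$ triangles combine to give exactly $\sum_{i=0}^{d} (-1)^i [t_i]$, which is $\indexhigher_{\cT}(s)$.

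Alternatively — and this may be cleaner — I would iterate Theorem \ref{thm:index_additive4} directly. Each triangle $v_{i+1} \to t_i \to v_i \to \Sigma v_{i+1}$ in the unfolding has its connecting morphism annihilated by $F$ (because $v_{i+1}$ lies, up to shift, in one of the layers $\Sigma\cT * \cdots * \Sigma^{n-1}\cT$ where $F$ vanishes by Lemma \ref{lem:vanishing}), so the error term $\theta([\Image F\gamma])$ vanishes and Theorem \ref{thm:index_additive4} (or its cheaper precursor Lemma \ref{lem:index_additive3}) gives clean additivity $\index_{\cT}(t_i) = \index_{\cT}(v_{i+1}) + \index_{\cT}(v_i)$ at each stage. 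Summing with alternating signs telescopes: the intermediate $\index_{\cT}(v_i)$ cancel, $\index_{\cT}(v_0) = \index_{\cT}(s)$ survives on one side, and on the other side $\sum (-1)^i \index_{\cT}(t_i)$ together with the end term $\index_{\cT}(v_d) = \index_{\cT}(\Sigma^{d-1} t_d) = (-1)^{d-1}[t_d]$ assembles into $\sum_{i=0}^d (-1)^i [t_i]$ after using Lemma \ref{lem:index_additive0} on the $t_i \in \cT$ (for which $\index_{\cT}(t_i) = [t_i]$).

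The main obstacle I anticipate is bookkeeping: pinning down the exact indices and shifts in the standard unfolding of a $(d+2)$-angle into $d$ triangles in $\cC$, and checking carefully that the connecting objects $v_i$ land in the layers on which $F$ vanishes (so that no $\theta$-correction appears) and that the maps can be arranged to be $\cT$-covers matching Lemma \ref{lem:T-resolution1}, rather than merely $\cT$-precovers. Once the signs and the vanishing are under control, the telescoping identity is routine. There is also a minor technical point — that one may replace the given $(d+2)$-angle by one whose structure maps are radical morphisms without changing $\indexhigher_{\cT}(s)$ — but this is exactly Remark \ref{rmk:index2}, which I would simply cite.
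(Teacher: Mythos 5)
Your preferred second route is essentially the paper's own proof: the paper unfolds the defining $( d+2 )$-angle into a tower of triangles in $\cC$ via \cite[thm.\ 1]{GKO}, shows that $F$ annihilates every connecting morphism because the intermediate objects lie in $*$-products of suspended layers on which $F$ vanishes, applies Lemma \ref{lem:index_additive3} to each triangle and telescopes, and finishes with Lemma \ref{lem:index_additive0}; this telescoping-with-vanishing step is packaged in the paper as Lemma \ref{lem:tower}. Your first route (upgrading the unfolded maps to $\cT$-covers so as to match Lemma \ref{lem:T-resolution1} literally) is not needed and is not what the paper does.

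One concrete point to repair in your bookkeeping: in the standard unfolding the leftmost object of the tower is $t_d$ itself, not $\Sigma^{ d-1 }t_d$; the suspensions only appear when the wavy morphisms are composed to produce $s \rightarrow \Sigma^d t_d$. With your identification $v_d = \Sigma^{ d-1 }t_d$, so that $\index_{ \cT }( v_d ) = (-1)^{ d-1 }[ t_d ]$ by Lemma \ref{lem:index_additive0}, the telescoped identity reads $\index_{ \cT }( s ) = \sum_{ i=0 }^{ d-1 } (-1)^i [ t_i ] + (-1)^d \index_{ \cT }( v_d ) = \sum_{ i=0 }^{ d-1 } (-1)^i [ t_i ] - [ t_d ]$, which agrees with $\indexhigher_{ \cT }( s )$ only when $d$ is odd. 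Taking the correct end term $v_d = t_d$, with $\index_{ \cT }( v_d ) = [ t_d ]$, the same telescoping gives $\sum_{ i=0 }^{ d } (-1)^i [ t_i ]$ for every $d$, as required. Apart from this indexing slip (which you yourself flagged), the argument is sound; in particular your vanishing claim is correct here in a simpler form than you state, since all objects in the top row of this tower lie in $\cT$, so the intermediate objects lie in $\cT * \Sigma\cT * \cdots * \Sigma^{ j }\cT$ with $j \leqslant d$, and $F$ kills their suspensions by Lemma \ref{lem:vanishing} (the layers involving $\cS$ are only needed for the more general Lemma \ref{lem:tower}, used again in Theorem \ref{thm:index_additive5}).
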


\begin{proof}
For $s \in \cS$, consider the $( d+2 )$-angle \eqref{equ:index2_higher_angle} from Definition \ref{def:index2}.  By \cite[thm.\ 1]{GKO} it gives a tower of triangles in $\cC$,
\[
  \xymatrix @-1.3pc @! {
    & t_{ d-1 } \ar[rr] \ar[dr] & & t_{ d-2 } \ar[rr] \ar[dr] & & \cdots \ar[rr] & & t_1 \ar[rr] \ar[dr] & & t_0 \ar[dr] \\
    t_d \ar[ur] && v_{ d-1.5 } \ar[ur] \ar@{~>}[ll] & & v_{ d-2.5 } \ar@{~>}[ll] & \cdots & v_{ 1.5 } \ar[ur] & & v_{ 0.5 } \ar[ur] \ar@{~>}[ll] && s \lefteqn{,} \ar@{~>}[ll] \\
               }
\]
with $t_i \in \cT$.  Lemma \ref{lem:tower} below implies
\[
  \index_{ \cT }( s )
  = \sum_{ i=0 }^d (-1)^i \index_{ \cT }( t_i ),
\]
which by Lemma \ref{lem:index_additive0} reads
\[
  \index_{ \cT }( s )
  = \sum_{ i=0 }^d (-1)^i [t_i].
\]
Combining with Definition \ref{def:index2} proves the proposition.
\end{proof}

\begin{Lemma}
\label{lem:tower}
Consider a tower of triangles in $\cC$,
\[
  \xymatrix @-1.2pc @! {
    & c_d \ar[rr] \ar[dr] & & c_{ d-1 } \ar[rr] \ar[dr] & & \cdots \ar[rr] & & c_2 \ar[rr] \ar[dr] & & c_1 \ar[dr] \\
    c_{ d+1 } \ar[ur] && v_{ d-0.5 } \ar[ur] \ar@{~>}[ll] & & v_{ d-1.5 } \ar@{~>}[ll] & \cdots & v_{ 2.5 } \ar[ur] & & v_{ 1.5 } \ar[ur] \ar@{~>}[ll] && c_0 \lefteqn{.} \ar@{~>}[ll] \\
               }
\]
If $c_{ d+1 } \in \cT$ and $c_i \in \cS$ for $2 \leqslant i \leqslant d$, then
\[
  \sum_{ i=0 }^{ d+1 } (-1)^i \index_{ \cT }( c_i ) = 0.
\]
\end{Lemma}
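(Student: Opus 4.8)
The plan is to break the tower into its $d$ constituent triangles, check that $F$ kills the connecting morphism of each one, apply Lemma~\ref{lem:index_additive3} to each, and telescope the resulting additivity relations. To fix notation, put $w_1 := c_0$, $w_{ d+1 } := c_{ d+1 }$ and $w_k := v_{ k - 0.5 }$ for $2 \leqslant k \leqslant d$, so that the tower is precisely the collection of triangles
\[
  \Delta_k : \qquad w_{ k+1 } \longrightarrow c_k \longrightarrow w_k \xrightarrow{ \gamma_k } \Sigma w_{ k+1 }, \qquad k = 1, \ldots, d.
\]
Granting for the moment that $F\gamma_k = 0$ for every $k$, Lemma~\ref{lem:index_additive3} applied to $\Delta_k$ gives $\index_{ \cT }( c_k ) = \index_{ \cT }( w_{ k+1 } ) + \index_{ \cT }( w_k )$. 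Forming the alternating sum $\sum_{ k = 1 }^{ d } (-1)^k$ of these identities, the terms $\index_{ \cT }( w_2 ), \ldots, \index_{ \cT }( w_d )$ cancel in consecutive pairs and one is left with $(-1)^d \index_{ \cT }( w_{ d+1 } ) - \index_{ \cT }( w_1 ) = (-1)^d \index_{ \cT }( c_{ d+1 } ) - \index_{ \cT }( c_0 )$, which rearranges to $\sum_{ i = 0 }^{ d+1 } (-1)^i \index_{ \cT }( c_i ) = 0$. So everything reduces to showing $F\gamma_k = 0$, equivalently $\cC( t, \Sigma w_{ k+1 } ) = 0$, for $k = 1, \ldots, d$.

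To see this I would first locate the objects $w_{ k+1 }$ among iterated extensions of shifts of $\cS$ and $\cT$. Unfolding the triangles $\Delta_d, \Delta_{ d-1 }, \ldots, \Delta_2$ in turn, i.e. rotating $\Delta_j$ to $c_j \rightarrow w_j \rightarrow \Sigma w_{ j+1 } \rightarrow \Sigma c_j$ and using $c_j \in \cS$ for $2 \leqslant j \leqslant d$ starting from $w_{ d+1 } = c_{ d+1 } \in \cT$ — with associativity of $*$ (Lemma~\ref{lem:associativity}) and the compatibility $\Sigma( \cX * \cY ) = \Sigma\cX * \Sigma\cY$ — yields
\[
  w_{ k+1 } \in \cS * \Sigma \cS * \cdots * \Sigma^{ d-k-1 } \cS * \Sigma^{ d-k } \cT
  \qquad ( 1 \leqslant k \leqslant d ),
\]
the $\cS$-part being empty for $k = d$, where the statement just reads $w_{ d+1 } \in \cT$; hence $\Sigma w_{ k+1 } \in \Sigma \cS * \Sigma^2 \cS * \cdots * \Sigma^{ d-k } \cS * \Sigma^{ d-k+1 } \cT$. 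Now I would invoke two vanishing facts. Since $\cS$ is a $d$-cluster tilting subcategory of $\cC$ and $t \in \cS$, we have $\cC( t, \Sigma^i \cS ) = 0$ for $1 \leqslant i \leqslant d-1$; and since $t$ is an Oppermann--Thomas cluster tilting object (Remark~\ref{rmk:blanket2}(ii)) with $\Sigma^d \cS = \cS$, Definition~\ref{def:OT_cluster_tilting}(i) gives $\cS( t, \Sigma^d t ) = 0$, hence $\cC( t, \Sigma^d \cT ) = 0$. In the product describing $\Sigma w_{ k+1 }$ every factor $\Sigma^i \cS$ has $1 \leqslant i \leqslant d-k \leqslant d-1$ and so is killed by $\cC( t, - )$, while the last factor $\Sigma^{ d-k+1 } \cT$ lies in $\Sigma^{ d-k+1 } \cS$ with exponent at most $d-1$ when $k \geqslant 2$, and equals $\Sigma^d \cT$ when $k = 1$, so it too is killed by $\cC( t, - )$. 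As $\cC( t, - )$ annihilates $\cX * \cY$ whenever it annihilates $\cX$ and $\cY$ (apply its long exact sequence to a triangle $x \rightarrow z \rightarrow y \rightarrow \Sigma x$), it follows that $\cC( t, \Sigma w_{ k+1 } ) = 0$ for all $k$, which completes the argument.

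The one step that is not pure bookkeeping is the case $k = 1$: the factor $\Sigma^d \cT$ appearing in $\Sigma w_2$ is not controlled by the $d$-cluster tilting property of $\cS$ alone — indeed $\cC( t, \Sigma^d \cS ) = \cC( t, \cS )$ need not vanish — so one genuinely has to use the Oppermann--Thomas identity $\cC( t, \Sigma^d \cT ) = 0$, and this is precisely why the hypothesis demands $c_{ d+1 } \in \cT$ rather than merely $c_{ d+1 } \in \cS$. Everything else, including the telescoping in the first paragraph and the extension bookkeeping in the second, is routine.
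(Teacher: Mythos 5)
Your proof is correct and follows essentially the same route as the paper's: the same telescoping over the constituent triangles of the tower via Lemma \ref{lem:index_additive3}, justified by the same descending-induction membership $v_{ i+0.5 } \in \cS * \cdots * \Sigma^{ d-i-1 }\cS * \Sigma^{ d-i }\cT$ and the vanishing of $F = \cC( t,- )$ on the shifted factors. The only cosmetic difference is at the factor $\Sigma^{ d }\cT$, where you invoke the Oppermann--Thomas identity $\cS( t,\Sigma^d t ) = 0$ via Remark \ref{rmk:blanket2}(ii), while the paper gets all the needed vanishing uniformly from the fact that $\cT$ is $2d$-cluster tilting in $\cC$ (Setup \ref{set:blanket1}); the two are interchangeable here.
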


\begin{proof}
Writing $v_{ d+0.5 } = c_{ d+1 }$ and $v_{ 0.5 } = c_0$, the triangles in the tower are 
\begin{equation}
\label{equ:tower1}
  v_{ i+0.5 } \rightarrow c_i \rightarrow v_{ i-0.5 } \xrightarrow{ \gamma_i } \Sigma v_{ i+0.5 }  \;\;\mbox{for}\;\;  1 \leqslant i \leqslant d.
\end{equation}
We claim that it is enough to prove
\begin{equation}
\label{equ:tower2}
  F( \Sigma v_{ i+0.5 } ) = 0  \;\;\mbox{for}\;\;  1 \leqslant i \leqslant d.
\end{equation}
Namely, Equation \eqref{equ:tower2} implies $F\gamma_i = 0$ for $1 \leqslant i \leqslant d$ whence Lemma \ref{lem:index_additive3} applied to the triangles \eqref{equ:tower1} gives
\[
  \index_{ \cT }( c_i ) = \index_{ \cT }( v_{ i+0.5 } ) + \index_{ \cT }( v_{ i-0.5 } )  \;\;\mbox{for}\;\;  1 \leqslant i \leqslant d.
\]
The alternating sum of these equations gives the equation in the lemma.

To prove Equation \eqref{equ:tower2}, note that
\begin{equation}
\label{equ:tower3}
  v_{ d+0.5 } = c_{ d+1 } \in \cT.
\end{equation}
Starting with this, descending induction using the triangles \eqref{equ:tower1} and $c_i \in \cS$ for $2 \leqslant i \leqslant d$
shows 
\begin{equation}
\label{equ:tower4}
  v_{ i+0.5 } \in \cS * \cdots * \Sigma^{ d-i-1 } \cS * \Sigma^{ d-i }\cT  \;\;\mbox{for} \;\; 1 \leqslant i \leqslant d-1.
\end{equation}

Equation \eqref{equ:tower3} implies
\begin{equation}
\label{equ:tower5}
  F( \Sigma v_{ d+0.5 } ) = 0
\end{equation}
by Lemma \ref{lem:vanishing}.  Equation \eqref{equ:tower4} implies
\begin{equation}
\label{equ:tower6}
  F( \Sigma v_{ i+0.5 } ) = 0  \;\;\mbox{for} \;\;  1 \leqslant i \leqslant d-1,
\end{equation}
because $F( - ) = \cC( t,- )$ vanishes on each of
\[
  \Sigma \cS, \ldots, \Sigma^{ d-i } \cS, \Sigma^{ d-i+1 }\cT \mbox{ for } 1 \leqslant i \leqslant d-1
\]
since $t \in \cT \subseteq \cS$ while $\cS \subseteq \cC$ is $d$-cluster tilting and $\cT \subseteq \cC$ is $2d$-cluster tilting.  Equations \eqref{equ:tower5} and \eqref{equ:tower6} prove Equation \eqref{equ:tower2}.
\end{proof}

\begin{Lemma}
\label{lem:Sigma}
If $s \in \cS$ and $0 \leqslant i \leqslant d-1$ is an integer, then $\index_{ \cT }( \Sigma^i s ) = (-1)^i \index_{ \cT }( s )$.
\end{Lemma}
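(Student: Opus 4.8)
The plan is to compute $\index_{\cT}(\Sigma^i s)$ by transporting the $\cT$-resolution of $s$ through the shift functor $\Sigma^i$. Throughout I write $i$ for the integer in the statement and reserve $j$ for a running index; the case $i=0$ is trivial. First I would fix $s\in\cS$ and take the $( d+2 )$-angle $t_d\rightarrow t_{d-1}\rightarrow\cdots\rightarrow t_0\rightarrow s\rightarrow\Sigma^d t_d$ of Definition \ref{def:index2}, so that $t_0,\dots,t_d\in\cT$ and, by Proposition \ref{pro:indices_agree} together with Definition \ref{def:index2}, $\index_{\cT}(s)=\indexhigher_{\cT}(s)=\sum_{j=0}^d(-1)^j[t_j]$. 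By \cite[thm.\ 1]{GKO} (cf.\ Remark \ref{rmk:blanket2}) this $( d+2 )$-angle is induced by a tower of triangles in $\cC$ whose pieces are triangles $v_{j+0.5}\rightarrow t_{j-1}\rightarrow v_{j-0.5}\xrightarrow{\gamma_j}\Sigma v_{j+0.5}$ for $1\leqslant j\leqslant d$, where $v_{d+0.5}=t_d$ and $v_{0.5}=s$; this is exactly the tower appearing in the proof of Proposition \ref{pro:indices_agree}.

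The heart of the argument is a decomposition of the partial cones: I would show by descending induction on $j$ that $v_{j+0.5}\in\cT*\Sigma\cT*\cdots*\Sigma^{d-j}\cT$ for $0\leqslant j\leqslant d$. The base case $j=d$ is just $v_{d+0.5}=t_d\in\cT$, and the inductive step follows from the rotated triangle $t_{j-1}\rightarrow v_{j-0.5}\rightarrow\Sigma v_{j+0.5}\rightarrow\Sigma t_{j-1}$ together with the associativity of $*$ (Lemma \ref{lem:associativity}). The point to stress is that the middle terms of the tower are the objects $t_{j-1}\in\cT$, so that the membership stays inside a $*$-product of \emph{shifts of $\cT$}, not merely of $\cS$; this is the crucial sharpening over the decomposition used in the proof of Lemma \ref{lem:tower}.

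Next I would apply the triangulated autoequivalence $\Sigma^i$ to the tower. Each piece becomes a triangle $\Sigma^i v_{j+0.5}\rightarrow\Sigma^i t_{j-1}\rightarrow\Sigma^i v_{j-0.5}\xrightarrow{\Sigma^i\gamma_j}\Sigma^{i+1}v_{j+0.5}$, and $\Sigma^{i+1}v_{j+0.5}\in\Sigma^{i+1}\cT*\cdots*\Sigma^{\,i+1+d-j}\cT$. Since $t\in\cT$ and $\cT$ is $2d$-cluster tilting, $F(-)=\cC(t,-)$ satisfies $F(\Sigma^m\cT)=0$ for $1\leqslant m\leqslant 2d-1$; and because $0\leqslant i\leqslant d-1$ and $1\leqslant j\leqslant d$, the exponents $i+1,\dots,i+1+d-j$ all lie in the interval $[1,2d-1]$. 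Hence $F(\Sigma^{i+1}v_{j+0.5})=0$, so $F(\Sigma^i\gamma_j)=0$, and Lemma \ref{lem:index_additive3} applied to the displayed triangle gives $\index_{\cT}(\Sigma^i t_{j-1})=\index_{\cT}(\Sigma^i v_{j+0.5})+\index_{\cT}(\Sigma^i v_{j-0.5})$ for $1\leqslant j\leqslant d$. Taking the alternating sum $\sum_{j=1}^d(-1)^{j-1}(\,\cdot\,)$ of these identities, the right-hand side telescopes (using $v_{d+0.5}=t_d$ and $v_{0.5}=s$), leaving $\index_{\cT}(\Sigma^i s)=\sum_{j=0}^d(-1)^j\index_{\cT}(\Sigma^i t_j)$. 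Finally, each $t_j\in\cT$ and $0\leqslant i\leqslant d-1\leqslant n-1$, so Lemma \ref{lem:index_additive0} gives $\index_{\cT}(\Sigma^i t_j)=(-1)^i[t_j]$, and combining with the first paragraph yields $\index_{\cT}(\Sigma^i s)=(-1)^i\sum_{j=0}^d(-1)^j[t_j]=(-1)^i\index_{\cT}(s)$.

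The main obstacle — indeed the only step needing real care — is the decomposition $v_{j+0.5}\in\cT*\cdots*\Sigma^{d-j}\cT$ and the accompanying bookkeeping on the exponents. The coarser statement $v_{j+0.5}\in\cS*\cdots*\Sigma^{d-j-1}\cS*\Sigma^{d-j}\cT$ used in the proof of Lemma \ref{lem:tower} is \emph{not} enough here: after applying $\Sigma^{i+1}$ a term $\Sigma^d\cS=\cS$ can appear, and $F$ does not vanish on $\cS$. The hypothesis $i\leqslant d-1$ is precisely what keeps every shift of $\cT$ that occurs within the vanishing range $[1,2d-1]$ of $F|_{\cT}$, so this is where that restriction is used.
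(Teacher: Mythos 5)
Your argument is correct, but it is a much longer route than the one the paper takes. The paper's proof is essentially one line: for each $i$ there is a triangle $\Sigma^i s \rightarrow 0 \rightarrow \Sigma^{ i+1 }s \xrightarrow{ \id } \Sigma^{ i+1 }s$ in $\cC$, and for $0 \leqslant i \leqslant d-2$ one has $F( \Sigma^{ i+1 }s ) = \cC( t,\Sigma^{ i+1 }s ) = 0$ because $t \in \cT \subseteq \cS$ and $\cS \subseteq \cC$ is $d$-cluster tilting; hence $F( \id ) = 0$, Lemma \ref{lem:index_additive3} gives $\index_{ \cT }( \Sigma^{ i+1 }s ) = - \index_{ \cT }( \Sigma^i s )$, and one iterates. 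So the key vanishing the paper exploits is $F( \Sigma^m \cS ) = 0$ for $1 \leqslant m \leqslant d-1$, applied to $s$ itself, with no resolution in sight. You instead transport the whole $\cT$-resolution tower of $s$ through $\Sigma^i$, which requires the finer decomposition $v_{ j+0.5 } \in \cT * \Sigma\cT * \cdots * \Sigma^{ d-j }\cT$ and the vanishing $F( \Sigma^m \cT ) = 0$ for $1 \leqslant m \leqslant 2d-1$ coming from $\cT$ being $2d$-cluster tilting; your bookkeeping on the exponents (maximal shift $i+d \leqslant 2d-1$, forced by $i \leqslant d-1$) is accurate, the telescoping of the alternating sum is right, and there is no circularity in invoking Proposition \ref{pro:indices_agree}. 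What your approach buys is an explicit formula $\index_{ \cT }( \Sigma^i s ) = \sum_{ j=0 }^{ d } (-1)^{ i+j }[ t_j ]$ read off from the shifted resolution (and, at $i=0$, a re-proof of Proposition \ref{pro:indices_agree}); what it costs is the descending induction on the partial cones and the dependence on the standard $( d+2 )$-angulated structure via \cite[thm.\ 1]{GKO}, none of which the paper's trivial-triangle trick needs. Minor cosmetic point: applying $\Sigma^i$ to a triangle yields a triangle only after inserting the sign $(-1)^i$ on the connecting morphism, but since $F\big( (-1)^i \Sigma^i \gamma_j \big) = 0$ if and only if $F( \Sigma^i \gamma_j ) = 0$, this does not affect your argument.
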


\begin{proof}
For each integer $i$ there is a triangle $\Sigma^i s \rightarrow 0 \rightarrow \Sigma^{ i+1 }s \xrightarrow{ \id } \Sigma^{ i+1 }s$
in $\cC$.  If $0 \leqslant i \leqslant d-2$ then $F( \Sigma^{ i+1 }s ) = \cC( t,\Sigma^{ i+1 }s ) = 0$ because $t \in \cT \subseteq \cS$ while $\cS \subseteq \cC$ is $d$-cluster tilting.  In particular, $F( \id ) = 0$ so Lemma \ref{lem:index_additive3} implies $\index_{ \cT }( \Sigma^{ i+1 }s ) = - \index_{ \cT }( \Sigma^i s )$, which gives the lemma.
\end{proof}

The following result does not rely on Setups \ref{set:blanket1} and \ref{set:blanket2}.  It holds if $\cC$ is an arbitrary triangulated category.

\begin{Lemma}
\label{lem:shorter_tower}
Let $m \geqslant 1$ be an integer and consider a tower of triangles in $\cC$,
\begin{equation}
\label{equ:shorter_tower1}
\vcenter{
  \xymatrix @-1.6pc @! {
    & c_m \ar[rr] \ar[dr] & & c_{ m-1 } \ar[rr] \ar[dr] & & \cdots \ar[rr] & & c_2 \ar[rr] \ar[dr] & & c_1 \ar[dr] \\
    c_{ m+1 } \ar[ur] && v_{ m-0.5 } \ar[ur] \ar@{~>}^{\gamma_{ m-0.5 }}[ll] & & v_{ m-1.5 } \ar@{~>}^{\gamma_{ m-1.5 }}[ll] & \cdots & v_{ 2.5 } \ar[ur] & & v_{ 1.5 } \ar[ur] \ar@{~>}^{\gamma_{ 1.5 }}[ll] && c_0 \lefteqn{.} \ar@{~>}^{\gamma_{ 0.5 }}[ll] \\
                       }
        }
\end{equation}
Set
\[
  \gamma = \Sigma^{ m-1 }( \gamma_{ m-0.5 } ) \circ \Sigma^{ m-2 }( \gamma_{ m-1.5 } ) \circ \cdots \circ \Sigma ( \gamma_{ 1.5 } ) \circ \gamma_{ 0.5 }
\]
and consider a triangle in $\cC$,
\[
  \Sigma^{ m-1 }c_{ m+1 } \rightarrow y \rightarrow c_0 \xrightarrow{ \gamma } \Sigma^m c_{ m+1 }.
\]
If $m = 1$ then $y \cong c_1$.  If $m \geqslant 2$ then there is a tower of triangles in $\cC$,
\begin{equation}
\label{equ:shorter_tower2}
\vcenter{
  \xymatrix @-1.7pc @! {
    & c_{ m-1 } \ar[rr] \ar[dr] & & c_{ m-2 } \ar[rr] \ar[dr] & & \cdots \ar[rr] & & c_2 \ar[rr] \ar[dr] & & c_1 \ar[dr] \\
    c_m \ar[ur] && w_{ m-1.5 } \ar[ur] \ar@{~>}[ll] & & w_{ m-2.5 } \ar@{~>}[ll] & \cdots & w_{ 2.5 } \ar[ur] & & w_{ 1.5 } \ar[ur] \ar@{~>}[ll] && y \lefteqn{.} \ar@{~>}[ll] \\
                       }
        }
\end{equation}
\end{Lemma}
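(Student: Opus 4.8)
The plan is to argue by induction on $m$, the inductive step being a single application of the octahedral axiom together with the cases $m-1$ and $m-1 = 1$ of the lemma itself.

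For $m = 1$ the tower \eqref{equ:shorter_tower1} is just the triangle $c_2 \to c_1 \to c_0 \xrightarrow{ \gamma_{ 0.5 } } \Sigma c_2$ and $\gamma = \gamma_{ 0.5 }$, so the given triangle $c_2 \to y \to c_0 \xrightarrow{ \gamma } \Sigma c_2$ has the same third morphism as this triangle. Rotating both so that $\gamma_{ 0.5 }$ stands first exhibits $\Sigma c_1$ and $\Sigma y$ as cones of the same morphism, whence $y \cong c_1$.

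Now let $m \geqslant 2$. Deleting the last triangle $v_{ 1.5 } \to c_1 \to c_0 \xrightarrow{ \gamma_{ 0.5 } } \Sigma v_{ 1.5 }$ from \eqref{equ:shorter_tower1} leaves a tower of triangles of the same shape with only $m-1$ triangles, whose two end objects are $c_{ m+1 }$ and $v_{ 1.5 }$; after the obvious shift of indices it is an instance of the lemma with parameter $m-1$, and its composite connecting morphism is $\gamma'' = \Sigma^{ m-2 }( \gamma_{ m-0.5 } ) \circ \cdots \circ \Sigma( \gamma_{ 2.5 } ) \circ \gamma_{ 1.5 } : v_{ 1.5 } \to \Sigma^{ m-1 } c_{ m+1 }$, so that $\gamma = \Sigma( \gamma'' ) \circ \gamma_{ 0.5 }$. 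Fix a triangle $\Sigma^{ m-2 } c_{ m+1 } \to y' \to v_{ 1.5 } \xrightarrow{ \gamma'' } \Sigma^{ m-1 } c_{ m+1 }$ and apply the inductive hypothesis to the shortened tower with this choice: if $m = 2$ it yields $y' \cong c_2$, and if $m \geqslant 3$ it yields a tower of triangles with top objects $c_{ m-1 }, \ldots, c_2$ and bottom objects $c_m, w_{ m-1.5 }, \ldots, w_{ 2.5 }, y'$.

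Next I would apply the octahedral axiom to the composable morphisms $c_0 \xrightarrow{ \gamma_{ 0.5 } } \Sigma v_{ 1.5 } \xrightarrow{ \Sigma( \gamma'' ) } \Sigma^m c_{ m+1 }$, whose composite is $\gamma$. Rotating the triangle $v_{ 1.5 } \to c_1 \to c_0 \xrightarrow{ \gamma_{ 0.5 } } \Sigma v_{ 1.5 }$ identifies $\Sigma c_1$ as a cone of $\gamma_{ 0.5 }$; rotating the suspension of the chosen triangle for $y'$ identifies $\Sigma^2 y'$ as a cone of $\Sigma( \gamma'' )$; and rotating the given triangle $\Sigma^{ m-1 } c_{ m+1 } \to y \to c_0 \xrightarrow{ \gamma } \Sigma^m c_{ m+1 }$ identifies $\Sigma y$ as a cone of $\gamma$. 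Hence the octahedral triangle reads $\Sigma c_1 \to \Sigma y \to \Sigma^2 y' \to \Sigma^2 c_1$; desuspending it and rotating once backwards gives a triangle $y' \to c_1 \to y \to \Sigma y'$. If $m = 2$ this triangle, together with $y' \cong c_2$, is the required one-triangle tower \eqref{equ:shorter_tower2}. If $m \geqslant 3$, set $w_{ 1.5 } = y'$ and append the triangle $y' \to c_1 \to y \to \Sigma y'$ to the right-hand end of the tower produced by the inductive hypothesis, taking the new long horizontal arrow $c_2 \to c_1$ to be the evident composite; the result is \eqref{equ:shorter_tower2}. The only point requiring care is the bookkeeping of suspensions and of indices that lets one recognise the shortened diagram as an instance of the lemma with parameter $m-1$ and match the connecting morphism; beyond that there is no substantial difficulty.
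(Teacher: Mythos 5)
Your proposal is correct and follows essentially the same route as the paper: induction on $m$, truncating the tower to get $y'$ from the composite $\gamma'' = \Sigma^{m-2}(\gamma_{m-0.5})\circ\cdots\circ\gamma_{1.5}$, and then an octahedron on the factorisation $\gamma = \Sigma(\gamma'')\circ\gamma_{0.5}$ producing the triangle $y' \rightarrow c_1 \rightarrow y \rightarrow \Sigma y'$ that is concatenated onto the inductive tower with $w_{1.5} = y'$. The only cosmetic difference is that you fold the case $m=2$ into the general inductive step via the $m=1$ base case, whereas the paper writes it out separately with its own octahedral diagram.
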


\begin{proof}
If $m = 1$ then the tower \eqref{equ:shorter_tower1} is a single triangle $c_2 \rightarrow c_1 \rightarrow c_0 \xrightarrow{ \gamma_{ 0.5 } } \Sigma c_2$, and there are no objects $v_{ \ast }$.  Since $\gamma = \gamma_{ 0.5 }$, part (i) follows.

If $m = 2$ then the tower \eqref{equ:shorter_tower1} is
\[
  \xymatrix @-0.5pc @! {
    & c_2 \ar[rr] \ar[dr] & & c_1 \ar[dr] \\
    c_3 \ar[ur] & & v_{ 1.5 } \ar[ur] \ar@{~>}^{\gamma_{ 1.5 }}[ll] && c_0 \lefteqn{.} \ar@{~>}^{\gamma_{ 0.5 }}[ll] \\
               }
\]
We have $\gamma = \Sigma( \gamma_{ 1.5 } ) \circ \gamma_{ 0.5 }$, so the octahedral axiom gives the following diagram where each row and column is a triangle.
\[
  \xymatrix @-1.1pc @! {
    c_2 \ar@{=}[r] \ar[d] & c_2 \ar[r] \ar[d] & 0 \ar[r] \ar[d] & \Sigma c_2 \ar[d] \\
    v_{ 1.5 } \ar[r] \ar[d] & c_1 \ar[r] \ar[d] & c_0 \ar[r]^-{ \gamma_{ 0.5 } } \ar@{=}[d] & \Sigma v_{ 1.5 } \ar[d]^{ \Sigma( \gamma_{ 1.5 } ) } \\
    \Sigma c_3 \ar[r] \ar[d] & y \ar[r] \ar[d] & c_0 \ar[r]_-{ \gamma } \ar[d] & \Sigma^2 c_3 \ar[d] \\
    \Sigma c_2 \ar@{=}[r] & \Sigma c_2 \ar[r] & 0 \ar[r] & \Sigma^2 c_2 \\
                      }
\]
The second column, $c_2 \rightarrow c_1 \rightarrow y \rightarrow \Sigma c_2$, gives the single triangle in the tower \eqref{equ:shorter_tower2}, and there are no objects $w_{ \ast }$.  

If $m \geqslant 3$ and the tower \eqref{equ:shorter_tower1} are given, then consider the truncated tower
\[
  \xymatrix @-1.5pc @! {
    & c_m \ar[rr] \ar[dr] & & c_{ m-1 } \ar[rr] \ar[dr] & & \cdots \ar[rr] & & c_3 \ar[rr] \ar[dr] & & c_2 \ar[dr] \\
    c_{ m+1 } \ar[ur] && v_{ m-0.5 } \ar[ur] \ar@{~>}^{\gamma_{ m-0.5 }}[ll] & & v_{ m-1.5 } \ar@{~>}^{\gamma_{ m-1.5 }}[ll] & \cdots & v_{ 3.5 } \ar[ur] & & v_{ 2.5 } \ar[ur] \ar@{~>}^{\gamma_{ 2.5 }}[ll] && v_{ 1.5 } \lefteqn{.} \ar@{~>}^{\gamma_{ 1.5 }}[ll] \\
               }
\]
Set
\[
  \gamma' = \Sigma^{ m-2 }( \gamma_{ m-0.5 } ) \circ \Sigma^{ m-3 }( \gamma_{ m-1.5 } ) \circ \cdots \circ \Sigma ( \gamma_{ 2.5 } ) \circ \gamma_{ 1.5 }
\]
and consider a triangle in $\cC$,
\[
  \Sigma^{ m-2 } c_{ m+1 } \rightarrow y' \rightarrow v_{ 1.5 } \xrightarrow{ \gamma' } \Sigma^{ m-1 } c_{ m+1 }.
\]
By induction, there is a tower
\begin{equation}
\label{equ:shorter_tower3}
\vcenter{
  \xymatrix @-1.7pc @! {
    & c_{ m-1 } \ar[rr] \ar[dr] & & c_{ m-2 } \ar[rr] \ar[dr] & & \cdots \ar[rr] & & c_3 \ar[rr] \ar[dr] & & c_2 \ar[dr] \\
    c_m \ar[ur] && w_{ m-1.5 } \ar[ur] \ar@{~>}[ll] & & w_{ m-2.5 } \ar@{~>}[ll] & \cdots & w_{ 3.5 } \ar[ur] & & w_{ 2.5 } \ar[ur] \ar@{~>}[ll] && y' \lefteqn{.} \ar@{~>}[ll] \\
                       }
        }
\end{equation}
We have $\gamma = \Sigma( \gamma' ) \circ \gamma_{ 0.5 }$, so the octahedral axiom gives the following diagram where each row and column is a triangle.
\[
  \xymatrix @-3.1pc @! {
    y' \ar@{=}[r] \ar[d] & y' \ar[r] \ar[d] & 0 \ar[r] \ar[d] & \Sigma y' \ar[d] \\
    v_{ 1.5 } \ar[r] \ar[d] & c_1 \ar[r] \ar[d] & c_0 \ar[r]^-{ \gamma_{ 0.5 } } \ar@{=}[d] & \Sigma v_{ 1.5 } \ar[d]^{ \Sigma( \gamma' ) } \\
    \Sigma^{ m-1 } c_{ m+1 } \ar[r] \ar[d] & y \ar[r] \ar[d] & c_0 \ar[r]_-{ \gamma } \ar[d] & \Sigma^m c_{ m+1 } \ar[d] \\
    \Sigma y' \ar@{=}[r] & \Sigma y' \ar[r] & 0 \ar[r] & \Sigma^2 y' \\
                      }
\]
The second column, $y' \rightarrow c_1 \rightarrow y \rightarrow \Sigma y'$, can be concatenated with the tower \eqref{equ:shorter_tower3}, giving the tower \eqref{equ:shorter_tower2} with $w_{ 1.5 } = y'$.
\end{proof}

The following is Theorem \ref{thm:index_additive5_intro} from the introduction.

\begin{Theorem}
\label{thm:index_additive5}
\begin{enumerate}
\setlength\itemsep{4pt}

  \item  If
\begin{equation}
\label{equ:index_additive5_1}
  s_{ d+1 } \rightarrow \cdots \rightarrow s_0 \xrightarrow{ \gamma } \Sigma^d s_{ d+1 }
\end{equation}
is a $( d+2 )$-angle in $\cS$, then
\[
  \sum_{ i=0 }^{ d+1 } (-1)^i \indexhigher_{ \cT }( s_i ) = 
  \theta \big( [ \Image F\gamma ] \big).
\]

  \item  The property in part (i) determines the homomorphism $\theta : \K0( \mod\,\Gamma ) \rightarrow \Ksp( \cT )$ of abelian groups.

\end{enumerate}

\end{Theorem}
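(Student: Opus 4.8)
The plan is to reduce both parts to the triangulated index, exploiting the realisation of $\cS$ as a $d$-cluster tilting subcategory of the triangulated category $\cC$ and using Theorem \ref{thm:index_additive4} together with Proposition \ref{pro:indices_agree}.

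\emph{Part (i).} All the quantities involved depend only on the isomorphism type of the $(d+2)$-angle, so by \cite[thm.\ 1]{GKO} I may assume \eqref{equ:index_additive5_1} comes from a tower of $d$ triangles in $\cC$, say $T_j : v_{ j+0.5 } \rightarrow s_j \rightarrow v_{ j-0.5 } \xrightarrow{ \delta_j } \Sigma v_{ j+0.5 }$ for $1 \leqslant j \leqslant d$, with $v_{ 0.5 } = s_0$, $v_{ d+0.5 } = s_{ d+1 }$, and $\gamma = \Sigma^{ d-1 }( \delta_d ) \circ \cdots \circ \Sigma( \delta_2 ) \circ \delta_1$, as in Lemma \ref{lem:shorter_tower}. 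Applying Theorem \ref{thm:index_additive4} to each $T_j$ gives $\index_{ \cT }( s_j ) = \index_{ \cT }( v_{ j+0.5 } ) + \index_{ \cT }( v_{ j-0.5 } ) - \theta\big( [ \Image F\delta_j ] \big)$, and forming the alternating sum $\sum_{ i=0 }^{ d+1 } (-1)^i \index_{ \cT }( s_i )$ the contributions of the intermediate objects $v_{ \ast }$ telescope away, leaving $\sum_{ i=0 }^{ d+1 } (-1)^i \index_{ \cT }( s_i ) = \sum_{ j=1 }^d (-1)^{ j+1 } \theta\big( [ \Image F\delta_j ] \big)$. Descending induction using the triangles $T_j$ shows $v_{ j+0.5 } \in \cS * \Sigma \cS * \cdots * \Sigma^{ d-j }\cS$, hence for $j \geqslant 2$ we have $\Sigma v_{ j+0.5 } \in \Sigma \cS * \cdots * \Sigma^{ d-j+1 }\cS$, which $F$ annihilates because $F( \Sigma^{ \ell }\cS ) = \Ext_{ \cC }^{ \ell }( t,\cS ) = 0$ for $1 \leqslant \ell \leqslant d-1$ ($t \in \cS$ and $\cS$ is $d$-cluster tilting). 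So $F\delta_j = 0$ for $j \geqslant 2$ and the right hand side collapses to $\theta\big( [ \Image F\delta_1 ] \big)$.

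It then remains to see $[ \Image F\delta_1 ] = [ \Image F\gamma ]$ in $\K0( \mod\,\Gamma )$. Writing $F\gamma = F( \Sigma^{ d-1 }\delta_d ) \circ \cdots \circ F( \Sigma\delta_2 ) \circ F\delta_1$, I apply $F$ to the triangle $\Sigma^{ j-1 }T_j$ and read off from its long exact sequence that $\Ker\big( F( \Sigma^{ j-1 }\delta_j ) \big) = \Image\big( F( \Sigma^{ j-1 }s_j ) \rightarrow F( \Sigma^{ j-1 }v_{ j-0.5 } ) \big)$; since $F( \Sigma^{ j-1 }s_j ) = \Ext_{ \cC }^{ j-1 }( t,s_j ) = 0$ for $2 \leqslant j \leqslant d$, every factor $F( \Sigma^{ j-1 }\delta_j )$ with $j \geqslant 2$ is injective. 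Hence $\Image F\gamma$ is the image of $\Image F\delta_1$ under an injective map, so the two classes coincide; combining this with the displayed equation and Proposition \ref{pro:indices_agree} proves part (i). This image identification is the main obstacle — it is exactly where the vanishing of higher extensions inside $\cS$ does the work — while the telescoping sign computation is routine but must be carried out with care.

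\emph{Part (ii).} It suffices to show that the classes $[ \Image F\gamma ]$, as $\gamma$ runs over connecting morphisms of $(d+2)$-angles in $\cS$, generate $\K0( \mod\,\Gamma )$, since then any homomorphism with the property in part (i) is forced. I first note that every morphism $g : x \rightarrow \Sigma^d y$ with $x,y \in \cS$ is, up to the sign $(-1)^d$, the connecting morphism of a $(d+2)$-angle in $\cS$: by the $(d+2)$-angulated axioms \cite[def.\ 2.1]{GKO}, extend $\Sigma^{ -d }g : \Sigma^{ -d }x \rightarrow y$ (legitimate since $\Sigma^{ -d }\cS = \cS$) to a $(d+2)$-angle, then rotate once, which turns the first morphism into the connecting one up to that sign. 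Now let $M \in \mod( \Gamma )$ and pick a projective presentation $Ft_1 \xrightarrow{ p } Ft_0 \rightarrow M \rightarrow 0$ with $t_0,t_1 \in \cT$; by Lemma \ref{lem:map_induced_by_F} together with $\Ext_{ \cC }^1( \cT,\cT ) = 0$, we have $p = F\nu$ for some $\nu : t_1 \rightarrow t_0$, so $[M] = [ Ft_0 ] - [ \Image F\nu ]$. Viewing $\nu$ as a morphism $t_1 \rightarrow \Sigma^d( \Sigma^{ -d }t_0 )$ and $\id_{ t_0 }$ as a morphism $t_0 \rightarrow \Sigma^d( \Sigma^{ -d }t_0 )$, the preceding remark exhibits $[ \Image F\nu ]$ and $[ Ft_0 ] = [ \Image F\id_{ t_0 } ]$ among the classes $[ \Image F\gamma ]$, so $[M]$ lies in the subgroup they generate. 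Since the classes $[M]$ generate $\K0( \mod\,\Gamma )$, this finishes the proof.
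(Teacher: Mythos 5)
Your proposal is correct, but it takes a genuinely different route from the paper in both parts. For part (i), the paper first folds the tower coming from \cite[thm.\ 1]{GKO} into a single triangle $\Sigma^{d-1}s_{d+1} \rightarrow y \rightarrow s_0 \xrightarrow{\gamma} \Sigma^d s_{d+1}$ plus a shorter tower (Lemma \ref{lem:shorter_tower}, an induction on octahedra), applies Theorem \ref{thm:index_additive4} once to that triangle, and then uses Lemmas \ref{lem:Sigma} and \ref{lem:tower} to account for $\index_{\cT}(y)$ and $\index_{\cT}(\Sigma^{d-1}s_{d+1})$; you instead apply Theorem \ref{thm:index_additive4} to every triangle of the tower, telescope the alternating sum, kill the error terms $\theta\big([\Image F\delta_j]\big)$ for $j\geqslant 2$ by the $d$-cluster tilting vanishing $\Ext^{1..d-1}_{\cC}(t,\cS)=0$, and identify $[\Image F\delta_1]=[\Image F\gamma]$ by observing that the remaining factors $F(\Sigma^{j-1}\delta_j)$ of $F\gamma$ are injective (again by the same vanishing). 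Your route avoids Lemmas \ref{lem:shorter_tower}, \ref{lem:tower} and \ref{lem:Sigma} entirely, at the price of the extra image-comparison step, which the paper's folding makes automatic since the folded triangle has connecting morphism exactly $\gamma$; both uses of the cluster tilting hypothesis are sound. For part (ii), the paper invokes Auslander--Reiten theory: $\cT$ is a dualizing $k$-variety, right almost split morphisms $s_1 \rightarrow s_0$ exist, and the resulting $(d+2)$-angles realise every simple module $S(s_0)$ as $\Image F\gamma$, so $\theta$ is pinned down on the simples, which generate $\K0(\mod\,\Gamma)$. You avoid this input altogether: using a projective presentation, the surjectivity in Lemma \ref{lem:map_induced_by_F}(i) (your extra appeal to $\Ext^1_{\cC}(\cT,\cT)=0$ is not needed), $\Sigma^d\cS=\cS$, and the extension and rotation axioms of \cite[def.\ 2.1]{GKO}, you write every $[M]$ as $[Ft_0]-[\Image F\nu]$ with both terms of the form $[\Image F\gamma]$ for connecting morphisms $\gamma$ of $(d+2)$-angles, so the classes $[\Image F\gamma]$ generate $\K0(\mod\,\Gamma)$ and uniqueness follows. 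This is more elementary and self-contained; what the paper's argument buys in exchange is the sharper statement, used in Section \ref{sec:example}, that $\theta$ is already determined by its values on the simple modules, each realised explicitly by an almost split $(d+2)$-angle.
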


\begin{proof}
(i):  If $d = 1$ then $\cS = \cC$ and Proposition \ref{pro:indices_agree} says $\indexhigher_{ \cT } = \index_{ \cT }$, so part (i) of the theorem reduces to Theorem \ref{thm:index_additive4}.

Assume $d \geqslant 2$.  By \cite[thm.\ 1]{GKO}, the $( d+2 )$-angle \eqref{equ:index_additive5_1} gives a tower of triangles in $\cC$,
\[
  \xymatrix @-1.4pc @! {
    & s_d \ar[rr] \ar[dr] & & s_{ d-1 } \ar[rr] \ar[dr] & & \cdots \ar[rr] & & s_2 \ar[rr] \ar[dr] & & s_1 \ar[dr] \\
    s_{ d+1 } \ar[ur] && v_{ d-0.5 } \ar[ur] \ar@{~>}^{\gamma_{ d-0.5 }}[ll] & & v_{ d-1.5 } \ar@{~>}^{\gamma_{ d-1.5 }}[ll] & \cdots & v_{ 2.5 } \ar[ur] & & v_{ 1.5 } \ar[ur] \ar@{~>}^{\gamma_{ 1.5 }}[ll] && s_0 \lefteqn{,} \ar@{~>}^{\gamma_{ 0.5 }}[ll] \\
               }
\]
with 
\[
  \gamma = \Sigma^{ d-1 }( \gamma_{ d-0.5 } ) \circ \Sigma^{ d-2 }( \gamma_{ d-1.5 } ) \circ \cdots \circ \Sigma( \gamma_{ 1.5 } ) \circ \gamma_{ 0.5 }.
\]
By Lemma \ref{lem:shorter_tower} there is a triangle in $\cC$,
\begin{equation}
\label{equ:index_additive5_2}
  \Sigma^{ d-1 }s_{ d+1 } \rightarrow y \rightarrow s_0 \xrightarrow{ \gamma } \Sigma^d s_{ d+1 },
\end{equation}
and a tower of triangles in $\cC$,
\begin{equation}
\label{equ:index_additive5_3}
\vcenter{
  \xymatrix @-1.6pc @! {
    & s_{ d-1 } \ar[rr] \ar[dr] & & s_{ d-2 } \ar[rr] \ar[dr] & & \cdots \ar[rr] & & s_2 \ar[rr] \ar[dr] & & s_1 \ar[dr] \\
    s_d \ar[ur] && w_{ d-1.5 } \ar[ur] \ar@{~>}[ll] & & w_{ d-2.5 } \ar@{~>}[ll] & \cdots & w_{ 2.5 } \ar[ur] & & w_{ 1.5 } \ar[ur] \ar@{~>}[ll] && y \lefteqn{.} \ar@{~>}[ll] \\
                       }
        }
\end{equation}

By Theorem \ref{thm:index_additive4}, the triangle \eqref{equ:index_additive5_2} gives
\[
  \index_{ \cT }( y )
  = \index_{ \cT }( \Sigma^{ d-1 }s_{ d+1 } )
  + \index_{ \cT }( s_0 )
  - \theta \big( [ \Image F\gamma ] \big),
\]
which by Lemma \ref{lem:Sigma} reads
\[
  \index_{ \cT }( y )
  = (-1)^{ d-1 }\index_{ \cT }( s_{ d+1 } )
  + \index_{ \cT }( s_0 )
  - \theta \big( [ \Image F\gamma ] \big).
\]
By Lemma \ref{lem:tower}, the tower \eqref{equ:index_additive5_3} gives
\[
  \index_{ \cT }( y ) + \sum_{ i=1 }^d (-1)^i \index_{ \cT }( s_i ) = 0.
\]
Note that the condition $c_{ d+1 } \in \cT$ in Lemma \ref{lem:tower} is satisfied because we have to set $c_{ d+1 } = 0$.  Combining the last two displayed equations gives
\[
  \sum_{ i=0 }^{ d+1 } (-1)^i \index_{ \cT }( s_i ) = 
  \theta \big( [ \Image F\gamma ] \big).
\]
Applying Proposition \ref{pro:indices_agree} completes the proof of part (i).

(ii):  Recall the notion of dualizing $k$-variety, see \cite[p.\ 307]{AR2}.  By \cite[p.\ 307]{AR2} the category $\cT = \add( t )$ is a dualizing $k$-variety because it is equivalent to the category of finitely generated right modules over $\Gamma = \End_{ \cC }( t )$.  Hence $\cT$ has right and left almost split morphisms by \cite[prop.\ 2.10(2)]{IY}.

Let $s_0$ be an indecomposable summand of $t$.  A right almost split morphism $s_1 \rightarrow s_0$   in $\cT$ is part of a $( d+2 )$-angle \eqref{equ:index_additive5_1} in $\cS$ by \cite[def.\ 2.1(F1)(c) and (F2)]{GKO}.  The functor $F$ sends \eqref{equ:index_additive5_1} to a long exact sequence by \cite[rmks.\ 2.2(c) and prop.\ 2.5(a)]{GKO}, so in particular $\Image F\gamma \cong S( s_0 )$ is the simple $\Gamma$-right module corresponding to $s_0$.  Part (i) hence says
\[
  \theta \big( [ S( s_0 ) ] \big)
  = \sum_{ i=0 }^{ d+1 } (-1)^i \indexhigher_{ \cT }( s_i ).
\]
This implies part (ii) because the classes $[ S( s_0 ) ]$ generate the abelian group $\K0( \mod\,\Gamma )$, since each simple $\Gamma$-right module has the form $S( s_0 )$ for some indecomposable summand $s_0$ of $t$.
\end{proof}

\section{Tropical friezes on $( d+2 )$-angulated categories}
\label{sec:friezes}

This section proves Theorem \ref{thm:frieze_intro} from the introduction (= Theorem \ref{thm:frieze}).  Recall that $\cS$ is said to be $2d$-Calabi--Yau if there are natural isomorphisms $\cS( s,s' ) \cong \dual\!\cS\big( s',( \Sigma^d )^2 s \big)$ for $s,s' \in \cS$.

\begin{Lemma}
\label{lem:dichotomy}
Assume that $\cS$ is $2d$-Calabi--Yau.  Let $s_0, s_{ d+1 } \in \indec\, \cS$ be an exchange pair with ensuing $( d+2 )$-angles \eqref{equ:exchange_pair1} and \eqref{equ:exchange_pair2}; see Section \ref{subsec:frieze}.  Then $F\gamma_0 = 0$ or $F\gamma_{ d+1 } = 0$.   
\end{Lemma}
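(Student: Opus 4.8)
The plan is to use the $2d$-Calabi--Yau property to rewrite both hypotheses $F\gamma_0 = 0$ and $F\gamma_{ d+1 } = 0$ as statements about factorisations through $\add( \Sigma^d t )$, and then to observe that if both failed, the perfect pairing witnessing the exchange pair would have to vanish.

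\textbf{Calabi--Yau reformulation.} First I would record the following. Since $\cS$ is $\Hom$-finite and $2d$-Calabi--Yau, $( \Sigma^d )^2 = \Sigma^{ 2d }$ is a Serre functor for $\cS$, so the natural isomorphism $\cS( t,- ) \cong \dual\!\cS( -,\Sigma^{ 2d }t )$ identifies, for a morphism $\delta : x \rightarrow y$ in $\cS$, the map $F\delta = \cS( t,\delta )$ with the $k$-dual of $\cS( y,\Sigma^{ 2d }t ) \xrightarrow{ - \circ \delta } \cS( x,\Sigma^{ 2d }t )$. Hence $F\gamma_0 = 0$ iff $v \circ \gamma_0 = 0$ for every $v : \Sigma^d s_{ d+1 } \rightarrow \Sigma^{ 2d }t$, and symmetrically $F\gamma_{ d+1 } = 0$ iff $w \circ \gamma_{ d+1 } = 0$ for every $w : \Sigma^d s_0 \rightarrow \Sigma^{ 2d }t$.

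\textbf{Key implication.} Next I would show that $F\gamma_0 \neq 0$ forces $\gamma_{ d+1 }$ to factor through an object of $\add( \Sigma^d t )$. By the reformulation there is $v = \Sigma^d v'$ with $v' : s_{ d+1 } \rightarrow \Sigma^d t$ and $0 \neq v \circ \gamma_0 \in \cS( s_0,\Sigma^{ 2d }t )$. Using $2d$-Calabi--Yau once more, $\cS( s_0,\Sigma^{ 2d }t ) \cong \dual\!\cS( t,s_0 )$, and naturality of this isomorphism (equivalently, the trace property of the Serre functor $\Sigma^{ 2d }$) produces $\mu : t \rightarrow s_0$ with $\Sigma^{ 2d }\mu \circ v \circ \gamma_0 \neq 0$. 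As $\Sigma^{ 2d }\mu \circ v = \Sigma^d( \Sigma^d\mu \circ v' )$, this forces $\Sigma^d\mu \circ v' \neq 0$; but $\Sigma^d\mu \circ v' \in \cS( s_{ d+1 },\Sigma^d s_0 )$, which is one dimensional and spanned by $\gamma_{ d+1 }$ (using $\dim_k \cS( s_{ d+1 },\Sigma^d s_0 ) = 1$, noted in Section \ref{subsec:frieze}). Thus $\gamma_{ d+1 }$ is a scalar multiple of $\Sigma^d\mu \circ v'$, hence factors through $\Sigma^d t$. The symmetric argument gives: $F\gamma_{ d+1 } \neq 0$ forces $\gamma_0$ to factor through an object of $\add( \Sigma^d t )$.

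\textbf{The contradiction.} Finally, suppose for contradiction $F\gamma_0 \neq 0$ and $F\gamma_{ d+1 } \neq 0$. By the previous step $\gamma_0 = b_0 a_0$ and $\gamma_{ d+1 } = b_1 a_1$ with $a_0 : s_0 \rightarrow \Sigma^d t_0$, $b_0 : \Sigma^d t_0 \rightarrow \Sigma^d s_{ d+1 }$, $a_1 : s_{ d+1 } \rightarrow \Sigma^d t_1$, $b_1 : \Sigma^d t_1 \rightarrow \Sigma^d s_0$ and $t_0,t_1 \in \cT$. The composite $\Sigma^d a_1 \circ b_0$ lies in $\cS( \Sigma^d t_0,\Sigma^{ 2d }t_1 ) = \cC( \Sigma^d t_0,\Sigma^{ 2d }t_1 ) \cong \cC( t_0,\Sigma^d t_1 )$, which is zero because $\cT$ is $2d$-cluster tilting in $\cC$ and $1 \leqslant d \leqslant 2d-1$; hence $\Sigma^d\gamma_{ d+1 } \circ \gamma_0 = \Sigma^d b_1 \circ ( \Sigma^d a_1 \circ b_0 ) \circ a_0 = 0$. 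On the other hand, identifying $\cS( s_{ d+1 },\Sigma^d s_0 )$ with $\cS( \Sigma^d s_{ d+1 },\Sigma^{ 2d }s_0 )$, the element $\Sigma^d\gamma_{ d+1 } \circ \gamma_0 \in \cS( s_0,\Sigma^{ 2d }s_0 )$ is precisely the value on $( \gamma_0,\gamma_{ d+1 } )$ of the perfect pairing $\cS( s_0,\Sigma^d s_{ d+1 } ) \times \cS( s_{ d+1 },\Sigma^d s_0 ) \rightarrow k$ furnished by $2d$-Calabi--Yau, and since $\gamma_0,\gamma_{ d+1 } \neq 0$ lie in one-dimensional spaces this value is nonzero --- a contradiction. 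Hence $F\gamma_0 = 0$ or $F\gamma_{ d+1 } = 0$. The only delicate point is the middle step: tracking the suspensions carefully and correctly invoking the trace property of $\Sigma^{ 2d }$ to turn non-vanishing of $v \circ \gamma_0$ into a genuine factorisation of $\gamma_{ d+1 }$; the rest is formal, resting on the one-dimensionality of the relevant $\Hom$-spaces and on $\cC( \cT,\Sigma^d \cT ) = 0$.
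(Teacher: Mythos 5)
Your proposal is correct, and its core coincides with the paper's: both arguments use the $2d$-Calabi--Yau duality together with the one-dimensionality of $\cS( s_{ d+1 },\Sigma^d s_0 )$ to show that $F\gamma_0 \neq 0$ forces $\gamma_{ d+1 }$ to factor through an object of $\add( \Sigma^d t )$ (your $\mu$ and $v'$ play the roles of the paper's $\tau$ and $\rho$; the paper gets $\tau$ directly from $F\gamma_0 \neq 0$ and applies the duality once, whereas you dualise twice, which is harmless). Where you diverge is the endgame: the paper concludes immediately that $F\gamma_{ d+1 } = 0$, since any composite $t \rightarrow s_{ d+1 } \xrightarrow{ \gamma_{ d+1 } } \Sigma^d s_0$ then passes through $\Sigma^d t$ and $\cS( t,\Sigma^d t ) = 0$; you instead prove the symmetric factorisation as well and derive a contradiction from $\Sigma^d \gamma_{ d+1 } \circ \gamma_0 = 0$ (using $\cC( \cT,\Sigma^d \cT ) = 0$) against non-vanishing of the Serre pairing on two non-zero elements of one-dimensional spaces. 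Your finish is valid --- the composition form of the pairing you need follows from naturality of the Calabi--Yau isomorphisms in the second variable, so the appeal to the ``trace property'' is legitimate and in fact can be weakened to plain binaturality --- but it is a detour: once you have the factorisation of $\gamma_{ d+1 }$ through $\Sigma^d t$, the single observation $\cS( t,\Sigma^d t ) = 0$ already gives $F\gamma_{ d+1 } = 0$, making the second factorisation and the pairing argument unnecessary.
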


\begin{proof}
Suppose $F\gamma_0 \neq 0$, that is $\cC( t,\gamma_0 ) \neq 0$.  Then we can pick a morphism $t \xrightarrow{ \tau } s_0$ such that the composition $t \xrightarrow{ \tau } s_0 \xrightarrow{ \gamma_0 } \Sigma^d s_{ d+1 }$ is non-zero.  It follows that the induced homomorphism
\[
  \cS( \Sigma^d s_0,\Sigma^{ 2d }s_{ d+1 } )
  \xrightarrow{ ( \Sigma^d \tau )^* }
  \cS( \Sigma^d t,\Sigma^{ 2d }s_{ d+1 } )
\]
is non-zero.  Hence the lower horizontal homomorphism is non-zero in the commutative square
\[
  \xymatrix @R=-3.1pc @C=-2.4pc @! {
    \cS( s_{ d+1 },\Sigma^d t ) \ar^{ (\Sigma^d \tau)_* }[rr] \ar^{ \mbox{\rotatebox{90}{$\sim$}} }[d] && \cS( s_{ d+1 },\Sigma^d s_0 ) \ar^{ \mbox{\rotatebox{90}{$\sim$}} }[d] \\
    \dual\!\cS( \Sigma^d t,\Sigma^{ 2d }s_{ d+1 } ) \ar_{ \dual\big( ( \Sigma^d \tau )^* \big) }[rr] && \dual\!\cS ( \Sigma^d s_0,\Sigma^{ 2d }s_{ d+1 } )\lefteqn{,} \\
                       }
\]
which exists since $\cS$ is $2d$-Calabi--Yau.  This implies that the upper horizontal homomorphism is non-zero, hence surjective since the target has dimension $1$ over $k$ by assumption.  So there exists a morphism $s_{ d+1 } \xrightarrow{ \rho } \Sigma^d t$ such that $\gamma_{ d+1 } = ( \Sigma^d \tau )_*( \rho )$, that is, $s_{ d+1 } \xrightarrow{ \gamma_{ d+1 } } \Sigma^d s_0$ has been factored as $s_{ d+1 } \xrightarrow{ \rho } \Sigma^d t \xrightarrow{ \Sigma^d \tau } \Sigma^d s_0$.  Then each composition $t \rightarrow s_{ d+1 } \xrightarrow{ \gamma_{ d+1 } } \Sigma^d s_0$ is zero because $\cC( t,\Sigma^d t ) = 0$, so $\cC( t,\gamma_{ d+1 } ) = 0$, that is, $F\gamma_{ d+1 } = 0$.
\end{proof}

The following is Theorem \ref{thm:frieze_intro} from the introduction.

\begin{Theorem}
\label{thm:frieze}
Assume that $\cS$ is $2d$-Calabi--Yau and that $d$ is odd, and let $\varphi : \Ksp( \cT ) \rightarrow \BZ$ be a homomorphism of abelian groups.  The composition 
\[
  \varphi \circ \indexhigher_{ \cT } : \obj\, \cS \rightarrow \BZ
\]
is a tropical frieze if $\varphi$ satisfies
\begin{equation}
\label{equ:frieze0}
  \varphi\theta\big( [M] \big) \geqslant 0
  \;\; \mbox{for each} \;\;
  M \in \mod( \Gamma ).
\end{equation}
\end{Theorem}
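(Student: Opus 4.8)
The plan is to verify the three requirements of Definition \ref{def:frieze} for the map $f := \varphi \circ \indexhigher_{\cT}$. That $f$ is constant on isomorphism classes is immediate from Remark \ref{rmk:index2}. For additivity, Proposition \ref{pro:indices_agree} identifies $\indexhigher_{\cT}$ with $\index_{\cT}$ on $\cS$, so Lemma \ref{lem:index_additive1} gives $\indexhigher_{\cT}(s \oplus s') = \indexhigher_{\cT}(s) + \indexhigher_{\cT}(s')$, and then $\varphi$ being a homomorphism of abelian groups gives $f(s \oplus s') = f(s) + f(s')$.

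The core of the argument is the exchange relation \eqref{equ:frieze_defining_equation}. Given an exchange pair $s_0, s_{d+1} \in \indec\,\cS$ with ensuing $(d+2)$-angles \eqref{equ:exchange_pair1} and \eqref{equ:exchange_pair2}, where $\gamma_0, \gamma_{d+1} \neq 0$, I would apply Theorem \ref{thm:index_additive5}(i) to each of the two angles. For \eqref{equ:exchange_pair1}, read as $s_{d+1} \rightarrow x_d \rightarrow \cdots \rightarrow x_1 \rightarrow s_0 \xrightarrow{\gamma_0} \Sigma^d s_{d+1}$, the interior term in position $i$ is $x_i$ and $\gamma = \gamma_0$. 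For \eqref{equ:exchange_pair2}, the angle must be relabelled to match the template of Theorem \ref{thm:index_additive5}: now $s_0$ plays the role of ``$s_{d+1}$'', $s_{d+1}$ plays the role of ``$s_0$'', the $i$-th interior term is $y_{d+1-i}$, and $\gamma = \gamma_{d+1}$. Invoking the hypothesis that $d$ is odd, so $(-1)^{d+1} = 1$, applying $\varphi$, and rearranging the two resulting alternating-sum identities (with a reindexing of the sum in the second), one arrives at
\[
  \sum_{i=1}^d (-1)^{i+1} f(x_i) = f(s_0) + f(s_{d+1}) - \varphi\theta\big([\Image F\gamma_0]\big),
  \qquad
  \sum_{i=1}^d (-1)^{i+1} f(y_i) = f(s_0) + f(s_{d+1}) - \varphi\theta\big([\Image F\gamma_{d+1}]\big).
\]

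Since $\Image F\gamma_0$ and $\Image F\gamma_{d+1}$ are objects of $\mod(\Gamma)$, the hypothesis \eqref{equ:frieze0} makes both subtracted terms $\geqslant 0$, so each left-hand side is $\leqslant f(s_0) + f(s_{d+1})$; as $(-1)^{d+1} = 1$, this is exactly ``$\leqslant$'' in \eqref{equ:frieze_defining_equation}. For the opposite inequality I would invoke Lemma \ref{lem:dichotomy}, which uses the $2d$-Calabi--Yau hypothesis to force $F\gamma_0 = 0$ or $F\gamma_{d+1} = 0$; in either case the corresponding class $[\Image F\gamma]$ is zero in $\K0(\mod\,\Gamma)$, so $\varphi\theta$ kills it and the associated sum equals $f(s_0) + f(s_{d+1})$, realising the maximum. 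This establishes \eqref{equ:frieze_defining_equation}, completing the verification.

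I expect the only delicate part to be the index bookkeeping in the third paragraph: correctly matching \eqref{equ:exchange_pair2} to the template of Theorem \ref{thm:index_additive5} under the swap of $s_0$ and $s_{d+1}$, carrying out the ensuing reindexing of the alternating sum, and confirming that the parity assumption on $d$ is precisely what aligns the signs of the two error terms (both entering with a minus sign) so that Lemma \ref{lem:dichotomy} is enough to pin down the maximum. Everything else is a direct assembly of results already in place.
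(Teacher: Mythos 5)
Your proposal is correct and follows essentially the same route as the paper: apply Theorem \ref{thm:index_additive5} to both exchange $(d+2)$-angles to get $X = S - \varphi\theta([\Image F\gamma_0])$ and (after the parity-dependent reindexing, which you carry out correctly) $Y = S - \varphi\theta([\Image F\gamma_{d+1}])$, then combine the nonnegativity hypothesis \eqref{equ:frieze0} with the dichotomy of Lemma \ref{lem:dichotomy} to conclude $S = \max\{X,Y\}$. This is the paper's argument in all essentials.
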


\begin{proof}
It is clear that $f = \varphi \circ \indexhigher_{ \cT }$ is constant on isomorphism classes and satisfies $f( s \oplus s' ) = f( s ) + f( s' )$.  Let $s_0, s_{ d+1 } \in \indec\, \cS$ be an exchange pair with ensuing $( d+2 )$-angles \eqref{equ:exchange_pair1} and \eqref{equ:exchange_pair2}, and set
\[
  S = f( s_0 ) + (-1)^{ d+1 }f( s_{ d+1 } ) \;,\;
  X = \sum_{ i=1 }^d (-1)^{ i+1 }f( x_i ) \;,\;
  Y = \sum_{ i=1 }^d (-1)^{ i+1 }f( y_i )
\]
whence the theorem amounts to
\begin{equation}
\label{equ:frieze1}	
  S = \max \{ X,Y \}.
\end{equation}

Set
\[
  C_0 = \varphi\theta \big( [ \Image F\gamma_0 ] \big) \;\;,\;\;
  C_{ d+1 } = \varphi\theta \big( [ \Image F\gamma_{ d+1 } ] \big)
\]
and observe that
\begin{eqnarray}
\label{equ:frieze2}
  & \mbox{$C_0, C_{ d+1 } \geqslant 0$ by Equation \eqref{equ:frieze0},} & \\[1mm]
\label{equ:frieze3}
  & \mbox{$C_0 = 0$ or $C_{ d+1 } = 0$ by Lemma \ref{lem:dichotomy}.} &
\end{eqnarray}

We have
\begin{align*}
  & f( s_0 )
  + \sum_{ i=1 }^d (-1)^i f( x_i )
  + (-1)^{ d+1 }f( s_{ d+1 } ) \\
  & \;\; =
  \varphi \Bigg(
  \indexhigher_{ \cT }( s_0 )
  + \sum_{ i=1 }^d (-1)^i \indexhigher_{ \cT }( x_i )
  + (-1)^{ d+1 }\indexhigher_{ \cT }( s_{ d+1 } )
  \Bigg) \\
  & \;\; \stackrel{ \rm (a) }{=}
  \varphi\theta \big( [ \Image F\gamma_0 ] \big),
\end{align*}
where (a) is by Theorem \ref{thm:index_additive5} applied to the $( d+2 )$-angle \eqref{equ:exchange_pair1}.  In the above notation, this means
\[
  S = X + C_0.
\]
A similar computation with the $( d+2 )$-angle \eqref{equ:exchange_pair2} shows
\[
  S = Y - (-1)^d C_{ d+1 }.
\]

Since $d$ is odd, Equation \eqref{equ:frieze1} is now an elementary consequence of the last two displayed equations combined with Equations \eqref{equ:frieze2} and \eqref{equ:frieze3}.
\end{proof}

\section{An example of a tropical frieze on a $5$-angulated category}
\label{sec:example}

This section shows an example of a $5$-angulated category $\cS$ and some tropical friezes on $\cS$ arising from Theorem \ref{thm:frieze_intro}.  One of the friezes is shown in Figure \ref{fig:frieze2} in the introduction.

Let $Q$ be the quiver $4 \rightarrow 3 \rightarrow 2 \rightarrow 1$, and let $\rad$ be the radical of the path algebra $kQ$.  The algebra $\Phi = kQ / \rad^2$ is $3$-representation finite in the sense of \cite[def.\ 2.2]{IO1}, and the unique $3$-cluster tilting subcategory of $\mod( \Phi )$ is
\[
  \cF = \add( \Phi \oplus \dual\!\Phi ),
\]
see \cite[prop.\ 2.3]{IO1} and \cite[thm.\ 3]{V}.  There is an associated $5$-angulated higher dimensional cluster category $\cS$ with AR quiver shown in Figure \ref{fig:AR_quiver}, see \cite[sec.\ 5]{OT} or Section \ref{subsec:classic2}.
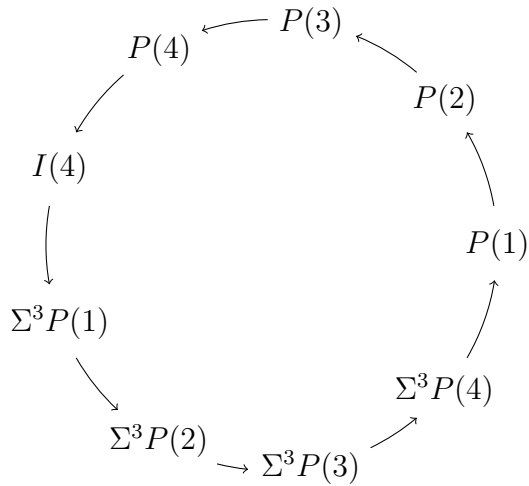
\begin{figure}
\begin{tikzpicture}[scale=3]
  \node at (0:1.0){$P(1)$};
  \draw[->] (10:1.0) arc (10:30:1.0);
  \node at (40:1.0){$P(2)$};
  \draw[->] (50:1.0) arc (50:68:1.0);
  \node at (80:1.0){$P(3)$};
  \draw[->] (91:1.0) arc (91:108:1.0);
  \node at (120:1.0){$P(4)$};
  \draw[->] (131:1.0) arc (131:150:1.0);
  \node at (160:1.0){$I(4)$};
  \draw[->] (170:1.0) arc (170:190:1.0);
  \node at (200:1.0){$\Sigma^3 P(1)$};
  \draw[->] (210:1.0) arc (210:227:1.0);
  \node at (240:1.0){$\Sigma^3 P(2)$};
  \draw[->] (256:1.0) arc (256:264:1.0);
  \node at (280:1.0){$\Sigma^3 P(3)$};
  \draw[->] (296:1.0) arc (296:310:1.0);
  \node at (320:1.0){$\Sigma^3 P(4)$};
  \draw[->] (330:1.0) arc (330:351:1.0);
\end{tikzpicture}
\caption{The AR quiver of the $5$-angulated category $\cS$ from Section \ref{sec:example}.}
\label{fig:AR_quiver}
\end{figure}
There is an inclusion
\[
  \cF \subseteq \cS
\]  
by \cite[thm.\ 5.2]{OT}, and
\[  
  t = \Phi
\]  
is an Oppermann--Thomas cluster tilting object of $\cS$ by \cite[thm.\ 5.5]{OT}.  We set $\cT = \add( t )$.

The theory of this paper applies to $\cS$ and $\cT$ with $d = 3$, because there is a triangulated category $\cC$ such that Setups \ref{set:blanket1} and \ref{set:blanket2} are satisfied, see \cite[thm.\ 5.25]{OT}.  Indeed, $\cC$ is the $6$-Calabi--Yau Amiot cluster category defined as  the triangulated hull of the orbit category $\Db( \mod\,\Phi )/( S\Sigma^{ -6 } )$ where $S$ is the Serre functor, see \cite[sec.\ 4.1]{A}, \cite[sec.\ 2.1]{IO2}, and \cite[sec.\ 5.1.2]{OT}.

We recall some properties of $\Phi$, $\cS$, and $t$.

\subsection{The algebra $\Phi$}

For $1 \leqslant i \leqslant 4$, the simple $\Phi$-right module at vertex $i$ will be  denoted $S( i )$.  Its projective cover and injective envelope are $P( i ) = e_i\Phi$ and $I( i ) = \dual( \Phi e_i )$, where $e_i$ is the idempotent at $i$.  The corresponding representations of $Q$ are the following. 
\[
\mbox{
\begin{tabular}{c|c|c}
  Indecomposable projective & Representation & Indecomposable injective \\ \cline{1-3}
  $P(1)$ & $0 \rightarrow 0 \rightarrow 0 \rightarrow k$ &        \\[1mm]
  $P(2)$ & $0 \rightarrow 0 \rightarrow k \rightarrow k$ & $I(1)$ \\[1mm]
  $P(3)$ & $0 \rightarrow k \rightarrow k \rightarrow 0$ & $I(2)$ \\[1mm]
  $P(4)$ & $k \rightarrow k \rightarrow 0 \rightarrow 0$ & $I(3)$ \\[1mm]
         & $k \rightarrow 0 \rightarrow 0 \rightarrow 0$ & $I(4)$ 
\end{tabular}
}
\]
The only indecomposable injective right module which is not projective is $I(4)$, so 
\[
  \indec\,\cF = \{ P( 1 ), P( 2 ), P( 3 ), P( 4 ), I( 4 ) \}.
\]

\subsection{The $5$-angulated category $\cS$}
\label{subsec:OT1}

The category $\cS$ is $6$-Calabi--Yau by \cite[thm.\ 5.2]{OT}.

Let $\cP = \add( \Phi )$ denote the projective modules in $\mod( \Phi )$.  The indecomposable objects of $\cS$ are $\indec\, \cS = \indec\, \cF \cup \indec\, \Sigma^3\cP$ by \cite[thm.\ 5.2(1)]{OT}, that is
\[
  \indec\, \cS = \{ P( 1 ), P( 2 ), P( 3 ), P( 4 ), I( 4 ),
                    \Sigma^3 P( 1 ), \Sigma^3 P( 2 ), \Sigma^3 P( 3 ), \Sigma^3 P( 4 )\}.
\]

The morphisms in $\cS$ are described in \cite[thm.\ 5.3(3)]{OT}.  This permits us to compute the AR quiver of $\cS$, which is shown in Figure \ref{fig:AR_quiver}.  We get that for $s,s' \in \indec\,\cS$,
\begin{equation}
\label{equ:Hom}
  \cS( s,s' ) =
  \left\{
    \begin{array}{cl}
      k & \mbox{if there is a arrow $s \rightarrow s'$ in the AR quiver,} \\[2mm]
      0 & \mbox{otherwise.}
    \end{array}
  \right.
\end{equation}
The action of $\Sigma^3$ on indecomposable objects is given by moving $4$ steps clockwise in the quiver.

Each $3$-extension in $\cF$ induces a $5$-angle in $\cS$ by \cite[exa.\ 5.17]{OT}.  In particular, the non-trivial $3$-extension $0 \rightarrow P(1) \rightarrow P(2) \rightarrow P(3) \rightarrow P(4) \rightarrow I(4) \rightarrow 0$ in $\cF$ induces a $5$-angle 
\begin{equation}
\label{equ:basic_5-angle}
  P(1) \rightarrow P(2) \rightarrow P(3) \rightarrow P(4) \rightarrow I(4) \rightarrow \Sigma^3 P(1)
\end{equation}
in $\cS$ where $I(4) \rightarrow \Sigma^3 P(1)$ is non-zero.  By ``rotation'' of this, in the sense of \cite[def.\ 2.1(F2)]{GKO}, we get that if $s_4, \ldots, s_0$ are consecutive objects in the AR quiver of $\cS$, then there is a $5$-angle in $\cS$,
\begin{equation}
\label{equ:fancy_5-angle1}
  s_4 \rightarrow s_3 \rightarrow s_2 \rightarrow s_1 \rightarrow s_0 \xrightarrow{\gamma} \Sigma^3 s_4,
\end{equation}
with $\gamma \neq 0$.  Note that in particular, $s_0$ and $s_4$ are an exchange pair.

\subsection{The Oppermann--Thomas cluster tilting object $t$}
\label{subsec:CT}

The object $t = \Phi$ of $\cS$ is in $\cF$, so the endomorphism algebra is
\[
  \Gamma = \End_{ \cS }( t ) \cong \End_{ \cF }( \Phi ) \cong \Phi.
\]  
Hence $F( - ) = \cS( t,- )$ is a functor
\[
  F : \cS \rightarrow \mod( \Phi ).
\]

We proceed to apply the theory of Sections \ref{sec:higher} and \ref{sec:friezes} to $\cS$ and $\cT$.

\subsection{The $( d+2 )$-angulated index with respect to $t$}
\label{subsec:indices}

The $5$-angle \eqref{equ:basic_5-angle} shows $\indexhigher_{ \cT }\big( I(4) \big) = - P(1) + P(2) - P(3) + P(4)$ (we omit square brackets in elements of $\Ksp( \cT )$ for readability).  The indices of the remaining indecomposable objects of $\cS$ can be computed using Lemma \ref{lem:index_additive0} and Proposition \ref{pro:indices_agree}, and we get the following.
\[
\mbox{
\begin{tabular}{c|c}
  $s$               & $\indexhigher_{ \cT }( s )$ \\ \cline{1-2}
  $P( 1 )$          & $P( 1 )$   \\[1mm]
  $P( 2 )$          & $P( 2 )$   \\[1mm]
  $P( 3 )$          & $P( 3 )$   \\[1mm]
  $P( 4 )$          & $P( 4 )$   \\[1mm]
  $I( 4 )$          & $- P( 1 ) + P( 2 ) - P( 3 ) + P( 4 )$ \\[1mm]
  $\Sigma^3 P( 1 )$ & $- P( 1 )$ \\[1mm]
  $\Sigma^3 P( 2 )$ & $- P( 2 )$ \\[1mm]
  $\Sigma^3 P( 3 )$ & $- P( 3 )$ \\[1mm]
  $\Sigma^3 P( 4 )$ & $- P( 4 )$ \\
\end{tabular}
}
\]

\subsection{The map $\theta$}
\label{subsec:theta}

The following table lists some of the $5$-angles \eqref{equ:fancy_5-angle1}.  In each case, the last object is indeed $\Sigma^3$ of the first, as one can check using that the action of $\Sigma^3$ is given by moving $4$ steps clockwise in the AR quiver of $\cS$.  For each $5$-angle we can compute $\Image F\gamma$ using Equation \eqref{equ:Hom}.  We can also compute $\theta \big( [ \Image F\gamma ] \big)$ by using that it is equal to $\sum_{ i=0 }^4 (-1)^i \indexhigher_{ \cT }( s_i )$ by Theorem \ref{thm:index_additive5};  this sum can be evaluated using Section \ref{subsec:indices}.  
\[
\mbox{
\begin{tabular}{c|c|c}
  $s_4 \rightarrow s_3 \rightarrow s_2 \rightarrow s_1 \rightarrow s_0 \xrightarrow{ \gamma } \Sigma^3 s_4$ & $\Image F\gamma$ & $\theta \big( [ \Image F\gamma ] \big)$ \\ \cline{1-3}
  $\Sigma^3 P( 1 ) \rightarrow \Sigma^3 P( 2 ) \rightarrow \Sigma^3 P( 3 ) \rightarrow \Sigma^3 P( 4 ) \rightarrow P( 1 ) \xrightarrow{ \gamma } P( 2 )$ & $S( 1 )$ & $P( 2 ) - P( 3 ) + P( 4 )$ \\
  $\Sigma^3 P( 2 ) \rightarrow \Sigma^3 P( 3 ) \rightarrow \Sigma^3 P( 4 ) \rightarrow P( 1 ) \rightarrow P( 2 ) \xrightarrow{ \gamma } P(3)$ & $S( 2 )$ & $- P( 1 ) + P( 3 ) - P( 4 )$ \\
  $\Sigma^3 P( 3 ) \rightarrow \Sigma^3 P( 4 ) \rightarrow P( 1 ) \rightarrow P( 2 ) \rightarrow P(3) \xrightarrow{ \gamma } P( 4 )$ & $S( 3 )$ & $P( 1 ) - P( 2 ) + P( 4 )$ \\
  $\Sigma^3 P( 4 ) \rightarrow P( 1 ) \rightarrow P( 2 ) \rightarrow P(3) \rightarrow P( 4 ) \xrightarrow{ \gamma } I( 4 )$ & $S( 4 )$ & $- P( 1 ) + P( 2 ) - P( 3 )$ \\
\end{tabular}
}
\]
This table determines the map $\theta$.

\subsection{Tropical friezes on $\cS$}
\label{subsec:frieze1}

Let $f : \obj\,\cS \rightarrow \BZ$ be a tropical frieze.  As mentioned in Section \ref{subsec:frieze}, this means that if $s_4, \ldots, s_0$ are consecutive objects in the AR quiver of $\cS$ (denoted $a,b,c,d,e$ in Section \ref{subsec:frieze}), then
\begin{equation}
\label{equ:frieze_specific}
  f( s_0 ) + f( s_4 )
  = \max \{\, f( s_1 ) - f( s_2 ) + f( s_3 ),0 \,\}.
\end{equation}
To show this, note that by Section \ref{subsec:OT1} the objects $s_0$ and $s_4$ are an exchange pair in $\cS$, and there is a $5$-angle \eqref{equ:fancy_5-angle1} in $\cS$.  Moreover, $s_4, \ldots, s_0$ occur in an anticlockwise order in the AR quiver of $\cS$, so there are four clockwise steps from $s_0$ to $s_4$ whence $\Sigma^3 s_0 \cong s_4$.  This gives a trivial $5$-angle in $\cS$,
\[
  s_0 \rightarrow 0 \rightarrow 0 \rightarrow 0 \rightarrow s_4 \xrightarrow{\cong} \Sigma^3 s_0.
\]
Equation \eqref{equ:frieze_specific} now follows from Definition \ref{def:frieze}.

\subsection{The tropical frieze $\varphi \circ \indexhigher_{ \cT }$}
\label{subsec:frieze2}

Let $\varphi : \Ksp( \cT ) \rightarrow \BZ$ be a homomorphism of abelian groups defined as follows.
\[
\mbox{
\begin{tabular}{c|c}
  $x$      & $\varphi( x )$ \\ \cline{1-2}
  $P( 1 )$ & $\alpha$ \\[1mm]
  $P( 2 )$ & $\beta$  \\[1mm]
  $P( 3 )$ & $\gamma$ \\[1mm]
  $P( 4 )$ & $\delta$ \\[1mm]
\end{tabular}
}
\]
The composition $\varphi \circ \indexhigher_{ \cT } : \obj\, \cS \rightarrow \BZ$ can be computed using the table in Section \ref{subsec:indices}.  Its values on the AR quiver of $\cS$ are shown in Figure \ref{fig:frieze}.
\begin{figure}
\begin{tikzpicture}[scale=3]
  \node at (0:1.0){$\alpha$};
  \draw[->] (6:1.0) arc (6:34:1.0);
  \node at (40:1.0){$\beta$};
  \draw[->] (46:1.0) arc (46:74:1.0);
  \node at (80:1.0){$\gamma$};
  \draw[->] (86:1.0) arc (86:114:1.0);
  \node at (120:1.0){$\delta$};
  \draw[->] (126:1.0) arc (126:154:1.0);
  \node at (160:1.0){$-\alpha+\beta-\gamma+\delta$};
  \draw[->] (166:1.0) arc (166:194:1.0);
  \node at (200:1.0){$-\alpha$};
  \draw[->] (206:1.0) arc (206:234:1.0);
  \node at (240:1.0){$-\beta$};
  \draw[->] (248:1.0) arc (248:270:1.0);
  \node at (280:1.0){$-\gamma$};
  \draw[->] (289:1.0) arc (289:312:1.0);
  \node at (320:1.0){$-\delta$};
  \draw[->] (328:1.0) arc (328:354:1.0);
\end{tikzpicture}
\caption{The AR quiver of $\cS$ with the values of the tropical frieze $\varphi \circ \indexhigher_{ \cT }$.  The integers $\alpha,\beta,\gamma,\delta$ must satisfy the inequalities \eqref{equ:inequalities}.}
\label{fig:frieze}
\end{figure}
It is a tropical frieze by Theorem \ref{thm:frieze} if
\begin{equation}
\label{equ:frieze4}
  \varphi\theta\big( [M] \big) \geqslant 0
  \;\; \mbox{for each} \;\;
  M \in \mod( \Gamma ).
\end{equation}
This is equivalent to $\varphi$ being non-negative on each of the elements in the last column of the table in Section \ref{subsec:theta}, which is again equivalent to
\begin{equation}
\label{equ:inequalities}	
  \begin{array}{rcc}
    \beta - \gamma + \delta    & \geqslant & 0, \\[1mm]
    - \alpha + \gamma - \delta & \geqslant & 0, \\[1mm]
    \alpha - \beta + \delta    & \geqslant & 0, \\[1mm]
    - \alpha + \beta - \gamma  & \geqslant & 0.
  \end{array}
\end{equation}
This system has solutions, for instance $( \alpha,\beta,\gamma,\delta ) = ( -17,-8,2,19 )$, which gives the tropical frieze shown in Figure \ref{fig:frieze2} in the introduction.

\medskip
\noindent
{\bf Acknowledgement.}
We thank Yann Palu for comments on a preliminary version, Quanshui Wu and Milen Yakimov for the invitation to present these results at the Joint International Meeting of the CMS and AMS in Shanghai, June 2018, and Zongzhu Lin for comments to that talk.  This work was supported by EPSRC grant EP/P016014/1 ``Higher Dimensional Homological Algebra''.

\end{document}